\numberwithin{equation}{section}
\newtheorem{Theorem}{Theorem}[section]
\newtheorem{Proposition}[Theorem]{Proposition}
\newtheorem{Lemma}[Theorem]{Lemma}
\newtheorem{Corollary}[Theorem]{Corollary}
\theoremstyle{definition}
\newtheorem{Definition}[Theorem]{Definition}
\newtheorem{Construction}[Theorem]{Construction}
\theoremstyle{remark}
\newtheorem{Remark}[Theorem]{Remark}
\def\dx{\,dx}
\def\dt{\,dt}
\def\ds{\,ds}
\DeclareMathOperator{\divergence}{div}
\DeclareMathOperator{\diam}{diam}
\DeclareMathOperator*{\osc}{osc}
\newcommand{\essosc}{\operatornamewithlimits{ess\, osc}}
\def\mint_#1{\mathchoice%
          {\mathop{\kern 0.2em\vrule width 0.6em height 0.69678ex depth -0.58065ex
                  \kern -0.8em \intop}\nolimits_{\kern -0.4em#1}}%
          {\mathop{\kern 0.1em\vrule width 0.5em height 0.69678ex depth -0.60387ex
                  \kern -0.6em \intop}\nolimits_{#1}}%
          {\mathop{\kern 0.1em\vrule width 0.5em height 0.69678ex
              depth -0.60387ex
                  \kern -0.6em \intop}\nolimits_{#1}}%
          {\mathop{\kern 0.1em\vrule width 0.5em height 0.69678ex depth -0.60387ex
                  \kern -0.6em \intop}\nolimits_{#1}}}
\newcommand{\aveint}[2]{\mathchoice%
          {\mathop{\kern 0.2em\vrule width 0.6em height 0.69678ex depth -0.58065ex
                  \kern -0.8em \intop}\nolimits_{\kern -0.45em#1}^{#2}}%
          {\mathop{\kern 0.1em\vrule width 0.5em height 0.69678ex depth -0.60387ex
                  \kern -0.6em \intop}\nolimits_{#1}^{#2}}%
          {\mathop{\kern 0.1em\vrule width 0.5em height 0.69678ex depth -0.60387ex
                  \kern -0.6em \intop}\nolimits_{#1}^{#2}}%
          {\mathop{\kern 0.1em\vrule width 0.5em height 0.69678ex depth -0.60387ex
                  \kern -0.6em \intop}\nolimits_{#1}^{#2}}}
\let\originalleft\left
\let\originalright\right
\renewcommand{\left}{\mathopen{}\mathclose\bgroup\originalleft}
\renewcommand{\right}{\aftergroup\egroup\originalright}
\newcommand{\loc}{\textnormal{loc}}
\newcommand{\supp}{\textnormal{supp}\,}
\newcommand{\esssup}{\operatornamewithlimits{ess\, sup}}
\newcommand{\essinf}{\operatornamewithlimits{ess\, inf}}
\newcommand{\essliminf}{\operatornamewithlimits{ess\,lim\, inf}}
\newcommand{\R}{{\mathbb R}}
\newcommand{\Q}{{\mathbb Q}}
\newcommand{\N}{{\mathbb N}}
\newcommand{\A}{{\mathcal A}}
\newcommand{\eps}{\varepsilon}
\newcommand{\vs}{\vspace{3mm}}
\begin{document}

\title[]{Obstacle problem\\ for a class of parabolic equations\\ of generalized $p$-Laplacian type}

\author{Casimir Lindfors}
\address{Aalto University, Institute of Mathematics, P.O. Box 11100, FI-00076 Aalto, Finland.}
\email{casimir.lindfors@aalto.fi}

\allowdisplaybreaks
\date{\today}

\begin{abstract}
We study nonlinear parabolic PDEs with Orlicz-type growth conditions. The main result gives the existence of a unique solution to the obstacle problem related to these equations. To achieve this we show the boundedness of weak solutions and that a uniformly bounded sequence of weak supersolutions converges to a weak supersolution. Moreover, we prove that if the obstacle is continuous, so is the solution.
\end{abstract}

\keywords{Degenerate/singular parabolic equations; General growth conditions; Obstacle problem}

\maketitle

\section{Introduction}

In this paper we prove the existence of a unique solution to the obstacle problem related to a wide class of nonlinear parabolic equations with a merely bounded obstacle. If the obstacle is also continuous, we show that the solution inherits the same property. More specifically, we consider equations of the type 
\begin{equation}\label{eq}
\partial_tu-\divergence \A(Du) = 0,
\end{equation}
where $\A$ is a $C^1$ vector field with $\A(\xi)\approx \frac{g(|\xi|)}{|\xi|}\xi$. Here $g\in C^1(\R_+)$ is a positive function satisfying the Orlicz-type growth condition 
\begin{equation}\label{O.property}
 g_0-1\leq\frac{sg'(s)}{g(s)}\leq g_1-1,\qquad s>0
\end{equation}
with $2n/(n+2)<g_0\leq g_1<\infty$. A function $u$ solves the corresponding obstacle problem with the obstacle $\psi$ if it is the smallest $\essliminf$-regularized (see \eqref{essliminfreg}) weak supersolution to \eqref{eq} such that $u\geq\psi$ almost everywhere in $\Omega_T$. 

Equation \eqref{eq} is a generalization of the widely studied evolutionary $p$-Laplace type equations. Indeed, when $g_0=g_1=p$ we have $g(s)=s^{p-1}$ up to a constant. For these equations the existence of a continuous solution to the obstacle problem with a continuous obstacle was proved in~\cite{KKS}. More irregular obstacles are treated in~\cite{LP}. Our proofs are analogous to those in~\cite{KKS,LP}, in fact, the main new ingredients are Theorem~\ref{superlimit}, which tells that a sequence of uniformly bounded weak supersolutions converges pointwise to a weak supersolution, and Theorem~\ref{boundednesstheorem}, stating that a nonnegative weak subsolution is locally bounded. For $p$-Laplace type equations the former was proved in \cite{KKP}, see also~\cite{LM}, for the latter we refer to~\cite{Porzio, DiBenedetto}.

In the study of the evolutionary $p$-Laplacian there is a strong distinction between the degenerate ($p\geq 2$) and singular ($1<p<2$) cases. For the more general equations we are interested in, the main difficulty compared to the $p$-Laplace case arises from the fact that the equation can be both degenerate and singular. Indeed, this is possible when $g_0<2<g_1$, see \cite{BL} for a concrete example. This difficulty can be seen for example in the proof of Theorem~\ref{boundednesstheorem}, where we merely obtain a qualitative bound for subsolutions, contrary to the $p$-Laplace case, see~\cite{DiBenedetto}. Another indication of how problematic the more general growth conditions can be is the fact that the H\"older continuity of solutions to parabolic equations with only measurable coefficients is still an open problem. Purely degenerate and purely singular cases have been treated in~\cite{HL1} and~\cite{HL2}, respectively, see also~\cite{Hwang}. Operators satisfying the more general growth conditions were first systematically studied in~\cite{Lieberman1}. Further developments have been made in~\cite{Cianchi-Mazya, DE, Baroni} in the elliptic setting and, in addition to the ones mentioned above, the parabolic case has been studied in~\cite{LiebPara1, LiebRussia, BKa}. The variational counterpart has been treated in~\cite{DieningBianca, FS, Cianchi, Cianchi-Fusco}. 

Motivation to study equations with more general growth comes from many physical phenomena that cannot be modeled sufficiently accurately using polynomial growth. For example, the stationary, irrotational flow of a compressible fluid can be modeled using an equation 
of the type
\[
\divergence\big[\rho(|Du|^2)Du\big]=0,
\]
where $Du$ is the velocity field of the flow and $|Du|=:q$ the speed of the flow. In this context one introduces the {\em Mach number}
\[
M^2\equiv [M(q)]^2:=-\frac{2 q^2}{\rho(q^2)}\rho'(q^2)
\]
(note that we must have $\rho'<0$). The general theory asserts that a point is {\em elliptic} if $M<1$ and in this case the flow is {\em subsonic}, while if $M>1$ the point is {\em hyperbolic} and the flow there is {\em supersonic}. If $M=1$ the flow is called {\em sonic}. In our context, where $g(s)=\rho(s^2)s$, we compute the Orlicz ratio $sg'(s)/g(s)=1-M(s)^2$. Thus, if we know that the flow maintains a controlled, small speed $q$, then the problem falls in the class of operators we consider. For further details see for instance \cite{Bers, Dong, Acta}.

Obstacle problems are a widely studied topic in the theory of partial differential equations. This is due to the fact that obstacle 
problems have numerous applications in several different areas of science, including physics, chemistry, biology, and even finance. Moreover, obstacle problems have turned out to be a fundamental tool in potential theory. The regularity of solutions to obstacle problems and the related free boundary problems is a classical topic in partial differential equations; for this we refer to~\cite{ACS1, Caffarelli}. For more recent advances in the parabolic setting see~\cite{KKS, LP, Lindqvist2, KMN}. The elliptic case has been treated comprehensively in~\cite{HKM}. 

The paper is organized as follows. In Section~\ref{preliminaries} we introduce the basic properties of the function $g$ and some useful results for the related Orlicz spaces. In Section~\ref{usefulresults} we state known results for weak solutions to \eqref{eq} and prove the convergence result Theorem~\ref{superlimit}. The boundedness of solutions is established in Section~\ref{qualitativeproperties}, and for this we prove an {\em a priori} result (Lemma~\ref{k_lemma}) which we find interesting in its own right. Moreover, we show that weak supersolutions to \eqref{eq} are lower semicontinuous (Theorem~\ref{superlsc}). Finally, in Section~\ref{obstacleproblem} we prove the existence result for the obstacle problem with a bounded obstacle (Theorem~\ref{bddexistence}), and show that if the obstacle is continuous, so is the solution (Theorem~\ref{obstaclecontinuity}).

\section{Preliminaries}\label{preliminaries}

Let $n\geq 2$ and $\Omega_T = \Omega\times (0,T) \subset \R^n\times\R$, where $\Omega$ is a bounded domain. Consider the equation 
\begin{equation}\label{general.equation}
\partial_tu-\divergence \A(Du) =0\qquad\text{in }\qquad\Omega_T
\end{equation}
where $\A:\R^n\to\R^n$ is a $C^1$ vector field satisfying
\begin{equation}\label{assumptionsO}
\begin{cases}
\displaystyle{\langle D\A(\xi)\zeta, \zeta\rangle \geq \nu \frac{g(|\xi|)}{|\xi|}|\zeta|^2}\\[3mm]
\displaystyle{|D\A(\xi)| \leq L\,\frac{g(|\xi|)}{|\xi|}}\\[2mm]
\end{cases},
\end{equation}
for every $\xi\in\R^n\setminus\{0\}, \zeta\in\R^n$ and with structural constants $0<\nu\leq1\leq L$. We may assume without loss of generality that $\A(0)=0$ by replacing $\A(\xi)$ with $\A(\xi)-\A(0)$. The function $g:\R_+\to\R_+$ is assumed to be $C^1$-regular and to satisfy \eqref{O.property}.  Moreover, without loss of generality we may assume that
\begin{equation}\label{normalization}
 \int_0^1 g(\rho)\,d\rho = 1
\end{equation}
by scaling $g$ with a suitable constant and changing the structural constants accordingly.

\begin{Remark}\label{more.general.conditions}
Our results hold for a wider class of operators $\A$, which allow the presence of a function $g$ that is not $C^1$ but merely Lipschitz. Indeed, we may consider Lipschitz functions $g:\R_+\to\R_+$ satisfying \eqref{O.property} almost everywhere and vector fields $\A:\R^n\to\R^n$ in $W^{1,\infty}(\R^n)$ satisfying the monotonicity and Lipschitz assumptions
\begin{equation}\label{assumptionsO.weak}
\begin{cases}
\displaystyle{\langle \A(\xi_1)-\A(\xi_2),\xi_1-\xi_2\rangle \geq \nu \frac{g(|\xi_1|+|\xi_2|)}{|\xi_1|+|\xi_2|}|\xi_1-\xi_2|^2}\\[3mm]
\displaystyle{|\A(\xi_1)-\A(\xi_2)| \leq L\, \frac{g(|\xi_1|+|\xi_2|)}{|\xi_1|+|\xi_2|}|\xi_1-\xi_2|},
\end{cases},
\end{equation}
for every $\xi_1,\xi_2\in\R^n$ such that $|\xi_1|+|\xi_2|\neq 0$ and for some $0<\nu\leq 1\leq L$. 
\end{Remark}

\subsection{Notation}

We denote by $c$ a general constant {\em always larger than or equal to one}, possibly varying from line to line; relevant dependencies on parameters will be emphasized using parentheses, i.e., ~$c\equiv c(n,g_0,g_1)$ means that $c$  depends on $n,g_0,g_1$. 

We denote by $$ B_R(x_0):=\{x \in \R^n \, : \,  |x-x_0|< R\}$$ the open ball with center $x_0$ and radius $R>0$; when clear from the context or otherwise not important, we shall omit the center. Unless otherwise explicitly stated, different balls and cylinders in the same context will have the same center. The parabolic boundary of a cylindrical domain $\mathcal K=\mathcal D\times (t_1,t_2)\subset\R^n\times\R$ is defined as
\[
\partial_p\mathcal K:=\big(\overline {\mathcal D}\times \{t_1\}\big)\cup\big(\partial \mathcal D\times (t_1,t_2)\big).
\]
Naturally, the parabolic closure of $\mathcal K$ is then ${\overline{\mathcal K}}^p:=\mathcal K\cup\partial_p\mathcal K$. Accordingly with the customary use in the parabolic setting, when considering a sub-cylinder $\mathcal K$ (as above) compactly contained in $\Omega_T$, we shall mean that $\mathcal D\Subset\Omega$ and $0<t_1<t_2\leq T$; we will write in this case $\mathcal K\Subset\Omega_T$.

With $\mathcal B \subset \R^\ell$ being a measurable set, $\chi_{\mathcal B}$ denotes its characteristic function. If furthermore $\mathcal B$ has positive and finite measure and $f: \mathcal B \to \R^{k}$ is a measurable map, we shall denote by  
$$
   \mint_{\mathcal B}  f(y) \,dy  := \frac{1}{|\mathcal B|}\int_{\mathcal B}  f(y) \,dy
$$
the integral average of $f$ over $\mathcal B$. We shall also as usual denote 
\[
\essosc_{\mathcal B}f:=\esssup_{\mathcal B}f-\essinf_{\mathcal B}f.
\]
By $q^*$ we denote the Sobolev conjugate exponent of $1\leq q<n$, i.e.,
\begin{equation}\label{convention}
  q^*:=\displaystyle{\frac{nq}{n-q}} 
\end{equation}
 With $s$ being a real number, we denote $s_+:=\max\{s, 0\}$ and $s_-:=\max\{-s, 0\}$. Finally, $\R_+:=[0,\infty)$, $\N$ is the set $\{1,2,\dots\}$ and $\N_0=\N\cup\{0\}$.

\subsection{Properties of $g$ and basic inequalities}

We begin by collecting useful properties of the function $g$ and some basic inequalities that will be needed later. For proofs see for example \cite{Adams} and \cite{DE}. 

First, observe that $g$ is strictly increasing and satisfies $g(0)=0$ and $\lim_{s\to\infty}g(s)=\infty$. Define the function $G: \R_+\to\R_+$ as 
\begin{equation}\label{G}
 G(s) := \int_0^s g(r)\,dr
\end{equation}
and its Young complement as
\[
 \widetilde G(s) := \sup_{r\geq 0}\,\big(sr - G(r)\big).
\]
The functions $G$ and $\widetilde G$ are strictly increasing, strictly convex, and map zero to zero, in particular, they are Young functions. Moreover, they both satisfy an Orlicz-type condition, namely for $s>0$ 
\begin{equation}\label{O.property3}
 g_0 \leq \frac{sG'(s)}{G(s)} \leq g_1 \qquad\textrm{and}\qquad  \frac{g_1}{g_1-1} \leq \frac{s\widetilde G'(s)}{\widetilde G(s)} \leq \frac{g_0}{g_0-1}.
\end{equation}
For $g$ we have the so-called $\Delta_2$-condition 
\begin{equation}\label{gDelta2}
 \min\left\{\alpha^{g_0-1},\alpha^{g_1-1}\right\}g(s)\leq g(\alpha s)\leq \max\left\{\alpha^{g_0-1},\alpha^{g_1-1}\right\}g(s),
\end{equation}
and corresponding inequalities hold also for $G$ and $\widetilde G$. Moreover, $G$ satisfies the triangle inequality modulo a constant 
\begin{equation}\label{triangle}
 G(s+r)\leq 2^{g_1}\big(G(s)+G(r)\big), \qquad s,r\geq 0,
\end{equation}
and the following important inequality
\begin{equation}\label{Yf}
 \widetilde G\left(\frac{G(s)}{s}\right) \leq G(s), \qquad s>0.
\end{equation}
Finally, for $\eps\in(0,1]$ we have the Young's inequality with $\eps$
\[
 sr \leq \varepsilon G(s) + \varepsilon^{-\frac{1}{g_0-1}}\widetilde G(r), \qquad s,r\geq 0.
\]

By writing 
\[
 \A(\xi)=\int_0^1 D\A(s\xi)\xi\ds
\]
we easily see that assumptions \eqref{assumptionsO} imply 
\begin{equation}\label{assumptions1}
\begin{cases}
\displaystyle{\left\langle\A(\xi),\xi\right\rangle \geq \tilde\nu G(|\xi|)}\\[3mm]
\displaystyle{|\A(\xi)| \leq \tilde L\,\frac{G(|\xi|)}{|\xi|}}\\[2mm]
\end{cases}
\end{equation}
for $\xi\in\R^n$ with $\tilde\nu:=\frac{g_0}{g_1-1}\nu$ and $\tilde L:=\frac{g_1}{g_0-1}L$, and in the case $G(s)=s^p$ these are precisely the commonly used assumptions in the study of $p$-Laplace type equations. 

\begin{Lemma}\label{monotonicity}
 \emph{(Strict monotonicity)} There exists a constant $c\equiv c(g_0,g_1,\nu)$ such that 
 \[
  \langle \A(\xi_1)-\A(\xi_2),\xi_1-\xi_2 \rangle\geq cg'(|\xi_1|+|\xi_2|)|\xi_1-\xi_2|^2
 \]
 for every $\xi_1,\xi_2\in\R^n$. 
\end{Lemma}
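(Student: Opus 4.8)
The plan is to deduce the claimed lower bound from the pointwise monotonicity encoded in the first line of \eqref{assumptionsO} together with the Orlicz condition \eqref{O.property}. First I would write the difference $\A(\xi_1)-\A(\xi_2)$ as an integral along the segment joining $\xi_2$ to $\xi_1$: setting $\xi_t := \xi_2 + t(\xi_1-\xi_2)$ for $t\in[0,1]$, we have
\[
 \langle \A(\xi_1)-\A(\xi_2),\xi_1-\xi_2\rangle = \int_0^1 \langle D\A(\xi_t)(\xi_1-\xi_2),\xi_1-\xi_2\rangle\dt \geq \nu\int_0^1 \frac{g(|\xi_t|)}{|\xi_t|}|\xi_1-\xi_2|^2\dt,
\]
using the ellipticity bound for every $t$ with $\xi_t\neq 0$ (the set of such $t$ has full measure unless $\xi_1=\xi_2$, in which case the inequality is trivial). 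Thus everything reduces to bounding $\int_0^1 g(|\xi_t|)/|\xi_t|\dt$ from below by a constant multiple of $g'(|\xi_1|+|\xi_2|)$.

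Next I would exploit \eqref{O.property} to compare $g(s)/s$ with $g'(s)$: the upper bound $sg'(s)/g(s)\leq g_1-1$ gives $g'(s)\leq (g_1-1)g(s)/s$, so it suffices to show $\int_0^1 g(|\xi_t|)/|\xi_t|\dt \geq c\, g'(|\xi_1|+|\xi_2|)$, and in fact it suffices to bound the integral below by $c\, g(|\xi_1|+|\xi_2|)/(|\xi_1|+|\xi_2|)$ and then invoke the lower bound $g_0-1\leq sg'(s)/g(s)$ — but note $g_0-1$ may be negative, so one cannot simply bound $g'$ above by $g/s$ that way. The clean route is: since $g(s)/s$ is, up to constants, monotone-comparable on dyadic scales via the $\Delta_2$-condition \eqref{gDelta2}, and since along the segment $|\xi_t|$ ranges over an interval whose length is $|\xi_1-\xi_2|$, a standard argument shows $\int_0^1 g(|\xi_t|)/|\xi_t|\dt \geq c\, g(|\xi_1|+|\xi_2|)/(|\xi_1|+|\xi_2|)$ when $|\xi_1-\xi_2|$ is comparable to $|\xi_1|+|\xi_2|$, and when $|\xi_1-\xi_2|$ is small compared to $|\xi_1|+|\xi_2|$ one has $|\xi_t|\approx |\xi_1|+|\xi_2|$ throughout, so the integrand is itself $\approx g(|\xi_1|+|\xi_2|)/(|\xi_1|+|\xi_2|)$. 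Either way one lands on a lower bound by $c\, g(|\xi_1|+|\xi_2|)/(|\xi_1|+|\xi_2|)$.

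Finally, to convert $g(s)/s$ into $g'(s)$ I would note that while the two-sided bound \eqref{O.property} does not directly give $g'(s)\geq c\,g(s)/s$ (the left inequality only says $g'(s)\geq (g_0-1)g(s)/s$, which is useless when $g_0<1$), one can instead argue that $g'$ and $g(s)/s$ are comparable \emph{in an averaged sense}: integrating $g'$ and using \eqref{O.property3} for $G$, or directly using that $g$ is increasing so $g'\geq 0$ and $g(2s)-g(s)=\int_s^{2s}g'(r)\dr$, combined with the $\Delta_2$-estimate $g(2s)-g(s)\approx g(s)$, yields $\sup_{[s,2s]}g' \gtrsim g(s)/s$ and hence, re-running the segment integral over a slightly larger range if necessary, the bound $\int_0^1 g(|\xi_t|)/|\xi_t|\dt\geq c\, g'(|\xi_1|+|\xi_2|)$ with $c\equiv c(g_0,g_1,\nu)$. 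The main obstacle is precisely this last comparison: unlike the $p$-Laplacian where $g'(s)=(p-1)s^{p-2}$ is a clean power, here $g'$ can oscillate, and the honest statement is that $g'(s)\leq (g_1-1)g(s)/s$ always, while $g'(s)\geq c\,g(s)/s$ holds only after averaging over a dyadic interval — so care is needed to make sure the segment integral sees enough of such an interval around $|\xi_1|+|\xi_2|$. I expect this is handled by splitting into the two cases $|\xi_1-\xi_2|\geq \tfrac12(|\xi_1|+|\xi_2|)$ and $|\xi_1-\xi_2|<\tfrac12(|\xi_1|+|\xi_2|)$ as sketched above.
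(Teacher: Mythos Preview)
The paper does not actually prove this lemma; it is stated in Section~2.2 with the blanket remark that proofs can be found in \cite{Adams} and \cite{DE}. So there is no in-paper argument to compare against, and your approach---writing $\A(\xi_1)-\A(\xi_2)$ as a line integral of $D\A$, applying the ellipticity bound $\eqref{assumptionsO}_1$, and then estimating $\int_0^1 g(|\xi_t|)/|\xi_t|\,dt$ from below---is exactly the standard one (it is essentially what appears in \cite{DE}).

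Your outline is correct through the point where you reduce matters to
\[
\int_0^1\frac{g(|\xi_t|)}{|\xi_t|}\,dt\geq c\,\frac{g(|\xi_1|+|\xi_2|)}{|\xi_1|+|\xi_2|},
\]
which follows, as you indicate, because on a subinterval of $[0,1]$ of fixed positive length (say $t\in[3/4,1]$ if $|\xi_1|\geq|\xi_2|$) one has $|\xi_t|\in[\tfrac14(|\xi_1|+|\xi_2|),\,|\xi_1|+|\xi_2|]$, and the $\Delta_2$-bound \eqref{gDelta2} then controls $g(|\xi_t|)/|\xi_t|$ from below by a constant times $g(|\xi_1|+|\xi_2|)/(|\xi_1|+|\xi_2|)$.

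Where you get tangled is the final conversion. You need $\tfrac{g(s)}{s}\geq c\,g'(s)$, i.e.\ $g'(s)\leq C\tfrac{g(s)}{s}$, and this is precisely the \emph{upper} bound in \eqref{O.property}, which you already wrote down: $g'(s)\leq(g_1-1)g(s)/s$. That closes the proof immediately. Your last paragraph instead chases the converse inequality $g'(s)\geq c\,g(s)/s$, which is the wrong direction and unnecessary. (Incidentally, even that converse is available here: the paper assumes $g_0>2n/(n+2)\geq 1$ for $n\geq 2$, so $g_0-1>0$ and the lower bound in \eqref{O.property} is not ``useless''. But you do not need it.) Drop the dyadic-averaging detour and the argument is complete with $c=c(g_0,g_1,\nu)$.
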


\begin{Remark}
 Note that since $g$ is increasing, Lemma~\ref{monotonicity} implies 
 \[
  \langle \A(\xi_1)-\A(\xi_2),\xi_1-\xi_2 \rangle\geq 0
 \]
for every $\xi_1,\xi_2\in\R^n$.
\end{Remark}

Define the natural quantity $V_g:\R^n\to\R^n$ by 
\[
 V_g(\xi) = \left(\frac{g(|\xi|)}{|\xi|}\right)^{\frac{1}{2}}\xi
\]
when $\xi\neq 0$ and set $V_g(0)=0$. Clearly $V_g$ is continuous and, moreover, has a continuous inverse by the inverse function theorem. 

\begin{Lemma}\label{Vglemma}
 There exists a constant $c\equiv c(g_0,g_1)$ such that
 \[
  |V_g(\xi_1)-V_g(\xi_2)|^2 \leq cg'(|\xi_1|+|\xi_2|)|\xi_1-\xi_2|^2
 \]
for every $\xi_1,\xi_2\in\R^n$.
\end{Lemma}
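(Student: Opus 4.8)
The plan is to estimate $|V_g(\xi_1)-V_g(\xi_2)|^2$ directly by writing it as a line integral of $DV_g$ along the segment joining $\xi_2$ to $\xi_1$, and to bound the resulting integrand using the growth condition \eqref{O.property}. Concretely, setting $\gamma(t)=\xi_2+t(\xi_1-\xi_2)$ for $t\in[0,1]$, one has
\[
V_g(\xi_1)-V_g(\xi_2)=\int_0^1 DV_g(\gamma(t))(\xi_1-\xi_2)\dt,
\]
so that $|V_g(\xi_1)-V_g(\xi_2)|\leq |\xi_1-\xi_2|\int_0^1 |DV_g(\gamma(t))|\dt$. The first task is therefore to compute $DV_g$ away from the origin and to show the pointwise bound $|DV_g(\xi)|^2\leq c\,g'(|\xi|)$ with $c\equiv c(g_0,g_1)$. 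Differentiating $V_g(\xi)=(g(|\xi|)/|\xi|)^{1/2}\xi$ gives, writing $h(s):=(g(s)/s)^{1/2}$,
\[
DV_g(\xi)=h(|\xi|)\,\mathrm{Id}+h'(|\xi|)\,\frac{\xi\otimes\xi}{|\xi|},
\]
whose operator norm is controlled by $|h(|\xi|)|+|\,|\xi|h'(|\xi|)|$. A short computation shows $2sh(s)h'(s)=g'(s)-g(s)/s$, and since \eqref{O.property} yields $0\le (g_0-1)g(s)/s\le g'(s)\le (g_1-1)g(s)/s$, both $h(s)^2=g(s)/s$ and $(sh'(s))^2$ are comparable to $g'(s)$ up to constants depending only on $g_0,g_1$ (using that $g_0>1$ is \emph{not} assumed here, so one must be slightly careful: when $g_0<2$ the term $g'(s)-g(s)/s$ may be negative, but its absolute value is still $\le\max\{1,\,g_1-2\}\,g(s)/s\le c\,g'(s)$ after invoking $g'(s)\ge(g_0-1)g(s)/s$ — this is where $g_0>0$ rather than $g_0>1$ forces a case split, but $g(s)/s\le c\,g'(s)$ fails when $g_0\le 1$; so in fact one should bound $h(s)^2+ (sh'(s))^2\le c\,g'(s)$ only after noting the statement's inequality is vacuous-friendly, or rather one bounds directly by reorganizing, since ultimately the claimed bound is $|DV_g(\xi)|^2\le c g'(|\xi|)$ and we only need $h(s)^2\le c g'(s)$, which holds iff $g_0\ge 1$; for $g_0<1$ one instead keeps $g(s)/s$ and compares through the integrated version).

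Given the pointwise bound, the second step is the integration. The subtlety is that $g'(|\gamma(t)|)$ must be compared to $g'(|\xi_1|+|\xi_2|)$ uniformly in $t$, even though $g'$ need not be monotone. Here the $\Delta_2$-type machinery is the right tool: since $g$ satisfies \eqref{gDelta2} and, by \eqref{O.property}, $g'(s)$ is comparable to $g(s)/s$ up to constants depending on $g_0,g_1$, one gets that $g'$ itself is "almost increasing" and "almost homogeneous" in the sense that $g'(\lambda s)\le c\max\{\lambda^{g_0-2},\lambda^{g_1-2}\}g'(s)$. Combined with $|\gamma(t)|\le|\xi_1|+|\xi_2|$ this gives $\int_0^1 g'(|\gamma(t)|)^{1/2}\dt\le c\,g'(|\xi_1|+|\xi_2|)^{1/2}$ after splitting according to whether $|\gamma(t)|$ is large or small relative to $|\xi_1|+|\xi_2|$ and using $\int_0^1 t^{(g_0-2)/2}\dt<\infty$, which is exactly where the hypothesis $g_0>2n/(n+2)>0$ (hence $g_0-2>-2$) guarantees integrability. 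Finally, squaring and using $\bigl(\int_0^1 f\bigr)^2\le\int_0^1 f^2$ or rather carrying the square through carefully yields the desired estimate. One also needs to address the case $\xi_1=\xi_2$ (trivial) and the case where the segment passes through the origin, where $DV_g$ blows up but is still integrable by the same power-counting.

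I expect the main obstacle to be the uniform comparison of $g'(|\gamma(t)|)$ to $g'(|\xi_1|+|\xi_2|)$ along the segment when the segment comes close to (or passes through) the origin, precisely because in our generality $g'$ is only controlled via \eqref{O.property} and the $\Delta_2$-condition and can fail to be monotone. The clean way around this is to never integrate $g'$ directly but to integrate $h(|\gamma(t)|)+|\gamma(t)|\,|h'(|\gamma(t)|)|$, recognize this as comparable to $(g(|\gamma(t)|)/|\gamma(t)|)^{1/2}$, and invoke the elementary fact — provable from \eqref{O.property} — that $s\mapsto (g(s)/s)^{1/2}$ has at most the growth/decay of $s^{(g_1-2)/2}$ and $s^{(g_0-2)/2}$, both of whose integrals over $[0,L]$ converge since $g_0>2n/(n+2)>2/(n/2+1)>\dots$ in any case $g_0>0$ gives exponent $>-1$. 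The bookkeeping of constants through the chain \eqref{O.property} $\Rightarrow$ comparability of $g'$ and $g/s$ $\Rightarrow$ power bounds $\Rightarrow$ integration is routine but must be done in an order that never divides by $g_0-1$ (which could be small or negative-ish), so one should phrase everything in terms of $G$, $g$, and the ratios in \eqref{O.property3} rather than isolating $g'$.
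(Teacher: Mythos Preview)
The paper does not prove this lemma itself; it defers to \cite{DE}, where the argument is precisely the line-integral method you sketch. So your approach is the standard one and is correct in outline.

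The one place where your write-up goes off the rails is the hedging about whether $g_0>1$. In this paper that is not an issue: the standing assumption is $g_0>2n/(n+2)$ with $n\geq 2$, hence $g_0>1$ always. Condition \eqref{O.property} then gives directly
\[
\frac{1}{g_1-1}\,g'(s)\ \leq\ \frac{g(s)}{s}\ \leq\ \frac{1}{g_0-1}\,g'(s),
\]
so $h(s)^2=g(s)/s$ and $(sh'(s))^2=\bigl(g'(s)-g(s)/s\bigr)^2/\bigl(4g(s)/s\bigr)$ are each bounded by $c(g_0,g_1)\,g'(s)$ with no case split. This yields $|DV_g(\xi)|^2\leq c\,g'(|\xi|)$ cleanly. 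For the integration, the same two-sided comparability combined with \eqref{gDelta2} gives $g'(\alpha L)\leq c\,\alpha^{g_0-2}g'(L)$ for $\alpha\in(0,1]$ and $L=|\xi_1|+|\xi_2|$; then the usual dichotomy (either $|\xi_1-\xi_2|\leq \tfrac12\max(|\xi_1|,|\xi_2|)$, in which case $|\gamma(t)|\gtrsim L$ uniformly, or else $|\xi_1-\xi_2|\gtrsim L$ and $|\gamma(t)|\geq |t-t_0|\,|\xi_1-\xi_2|$) reduces $\int_0^1 g'(|\gamma(t)|)^{1/2}\,dt$ to $c\,g'(L)^{1/2}\int_0^1 |t-t_0|^{(g_0-2)/2}\,dt$, which is finite because $g_0>0$. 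Drop the parenthetical worries about $g_0\leq 1$ and the proof is complete.
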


\subsection{Orlicz spaces}

Let $G$ be as in \eqref{G}. A measurable function $u:\Omega\to\R$ belongs to the Orlicz space $L^G(\Omega)$ if it satisfies 
\[
 \int_{\Omega}G(|u|)\dx < \infty.
\]
The space $L^G(\Omega)$ is a vector space, since $G$ satisfies the $\Delta_2$-condition, and it can be shown to be a 
Banach space if endowed with the Luxemburg norm 
\[
||u||_{L^G(\Omega)} := \inf\left\{\lambda>0:\int_{\Omega}G\bigg(\frac{|u|}{\lambda}\bigg)\dx\leq 1\right\}.
\]
A function $u$ belongs to $L_{\textrm{loc}}^G(\Omega)$, if $u\in L^G(\Omega')$ for every $\Omega'\Subset\Omega$. If also the weak gradient 
of $u$ belongs to $L^G(\Omega)$, we say that $u\in W^{1,G}(\Omega)$. The corresponding space with zero boundary values, denoted $W^{1,G}_0(\Omega)$, is the completion of $C^{\infty}_c(\Omega)$ under the norm 
\[
||u||_{W^{1,G}(\Omega)}:=||u||_{L^G(\Omega)}+||Du||_{L^G(\Omega)}. 
\]
We denote by $V^G(\Omega_T)$ the space of functions $u\in L^G(\Omega_T)\cap L^1(0,T;W^{1,1}(\Omega))$ for which also the weak spatial gradient $Du$ belongs to $L^G(\Omega_T)$. The space $V^G(\Omega_T)$ is also a Banach space with the norm 
\[
 ||u||_{V^G(\Omega_T)} := ||u||_{L^G(\Omega_T)}+||Du||_{L^G(\Omega_T)}.
\]
Moreover, we denote by $V_0^G(\Omega_T)$ the space of functions $u\in V^G(\Omega_T)$ for which $u(\cdot,t)$ belongs to $W^{1,G}_0(\Omega)$ for almost every $t\in(0,T)$, while the localized version $V^G_\loc(\Omega_T)$ is defined, as above, in the customary way. We also use the shorthand notation 
\[
V^{2,G}(\Omega_T):=L^\infty\left(0,T;L^2(\Omega)\right)\cap V^G(\Omega_T)
\]
and similarly for the localized and the zero trace versions. More on Orlicz spaces can be found, for example, in \cite{Adams}.

We gather here some useful results for Orlicz space functions. 

\begin{Lemma}
 \emph{(H\"older's inequality)} Let $u\in L^G(\Omega_T)$ and $v\in L^{\widetilde G}(\Omega_T)$. Then $uv\in L^1(\Omega_T)$ and we have 
 \[
  \int_{\Omega_T}|u||v|\dx\dt \leq 2||u||_{L^G(\Omega_T)}||v||_{L^{\widetilde G}(\Omega_T)}.
 \]
\end{Lemma}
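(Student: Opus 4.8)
The plan is to obtain the estimate from the pointwise Young inequality $st\le G(s)+\widetilde G(t)$, valid for all $s,t\ge0$, together with the homogeneity of the Luxemburg norm. The pointwise inequality is immediate from the definition of the Young complement: for fixed $s\ge0$ one has $\widetilde G(t)=\sup_{r\ge0}\big(tr-G(r)\big)\ge ts-G(s)$, hence $ts\le G(s)+\widetilde G(t)$.

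I would first discard the trivial cases: if $||u||_{L^G(\Omega_T)}=0$ then $u=0$ almost everywhere and both sides of the claimed inequality vanish, and similarly if $||v||_{L^{\widetilde G}(\Omega_T)}=0$. So assume both norms are positive and fix arbitrary numbers $\lambda>||u||_{L^G(\Omega_T)}$ and $\mu>||v||_{L^{\widetilde G}(\Omega_T)}$. By the definition of the Luxemburg norm as an infimum, there is $\lambda'\in(||u||_{L^G(\Omega_T)},\lambda)$ with $\int_{\Omega_T}G(|u|/\lambda')\dx\dt\le1$, and since $G$ is increasing this gives $\int_{\Omega_T}G(|u|/\lambda)\dx\dt\le1$; likewise $\int_{\Omega_T}\widetilde G(|v|/\mu)\dx\dt\le1$.

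Next I would apply the pointwise Young inequality with $s=|u(x,t)|/\lambda$ and $t=|v(x,t)|/\mu$ to obtain $\frac{|u|\,|v|}{\lambda\mu}\le G\big(\frac{|u|}{\lambda}\big)+\widetilde G\big(\frac{|v|}{\mu}\big)$ almost everywhere in $\Omega_T$. The right-hand side is integrable over $\Omega_T$, whence $uv\in L^1(\Omega_T)$, and integrating yields $\frac{1}{\lambda\mu}\int_{\Omega_T}|u||v|\dx\dt\le2$, i.e. $\int_{\Omega_T}|u||v|\dx\dt\le2\lambda\mu$. Letting $\lambda\downarrow||u||_{L^G(\Omega_T)}$ and $\mu\downarrow||v||_{L^{\widetilde G}(\Omega_T)}$ gives the asserted bound.

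There is no real obstacle here; the only point deserving a word of care is the passage from the infimum in the definition of the norm to an actual bound on the modular $\int_{\Omega_T}G(|u|/\lambda)\dx\dt$, which is handled as above by monotonicity of $G$. Alternatively, one may observe that by the $\Delta_2$-condition \eqref{gDelta2} and dominated convergence the modular depends continuously on $\lambda$ on the interval where it is finite, so the infimum is in fact attained and one can take $\lambda=||u||_{L^G(\Omega_T)}$ and $\mu=||v||_{L^{\widetilde G}(\Omega_T)}$ directly.
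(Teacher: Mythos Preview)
Your proof is correct and is the standard argument for H\"older's inequality in Orlicz spaces. The paper does not actually give a proof of this lemma: it simply refers the reader to \cite{Adams}, where precisely the argument you have written (Young's inequality applied pointwise to the normalized functions, then integrated) is carried out. So your proposal fills in the details that the paper omits, and there is nothing to compare.
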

\begin{proof}
See for example \cite{Adams}.
\end{proof}

\begin{Lemma}\label{characteristicnorm}
 The inequality 
 \[
  ||\chi_E||_{L^G(\Omega_T)}\leq \max\left\{|E|^{\frac{1}{g_1}},|E|^{\frac{1}{g_0}}\right\}.
 \]
 holds for every $E\subset\Omega_T$.
\end{Lemma}
\begin{proof}
We may assume $|E|>0$, since the claim trivially holds if $|E|=0$. It is easy to show that $G^{-1}(s) \geq \min\left\{s^{\frac{1}{g_1}},s^{\frac{1}{g_0}}\right\}$ for every $s\geq 0$. Now, since 
\[
 \int_{\Omega_T}G\left(G^{-1}\left(\frac{1}{|E|}\right)\chi_E\right)\dx\dt=1,
\]
we have 
\[
 ||\chi_E||_{L^G(\Omega_T)}=\frac{1}{G^{-1}\left(1/|E|\right)}
 \leq \max\left\{|E|^{\frac{1}{g_1}},|E|^{\frac{1}{g_0}}\right\}.\qedhere
\]
\end{proof}

\begin{Lemma}\label{poincare}
 \emph{(Poincar\'e's inequality)} Let $\Omega\subset\R^n$ be a bounded set. Then 
 \[
  \int_{\Omega}G\left(\frac{|u|}{\diam(\Omega)}\right)\dx\leq \int_{\Omega}G(|Du|)\dx
 \]
for every $u\in W^{1,G}_0(\Omega)$.
\end{Lemma}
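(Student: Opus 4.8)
The plan is to reduce the Orlicz Poincaré inequality to the classical $W^{1,1}_0$ Poincaré inequality by exploiting the convexity of $G$ together with Jensen's inequality. First I would recall that for $u\in W^{1,G}_0(\Omega)$ we have in particular $u\in W^{1,1}_0(\Omega)$, so the standard Poincaré inequality applies pointwise in the following averaged form: for each $x\in\Omega$, writing $d:=\diam(\Omega)$, the value $|u(x)|$ can be controlled by an average of $|Du|$ along a segment. More precisely, the classical estimate gives $|u(x)|\le \int_\Omega \frac{|Du(y)|}{|x-y|^{n-1}}\dy$ up to a dimensional constant, but since we want a clean constant and the diameter normalization, I would instead use the one-dimensional representation $u(x)=-\int_0^\infty \partial_r u(x+r\omega)\dr$ along a ray hitting $\partial\Omega$, which after integrating in $\omega$ and using that the ray has length at most $d$ yields $|u(x)|\le \frac{1}{|\mathbb S^{n-1}|}\int_{\mathbb S^{n-1}}\int_0^{d} |Du(x+r\omega)|\,dr\,d\omega$ or a similar pointwise bound.

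The heart of the argument is then: apply $G$ to $|u(x)|/d$, and since $G$ is convex with $G(0)=0$, and $|u(x)|/d$ is expressed as an average (a convex combination) of values $|Du|$, Jensen's inequality moves $G$ inside. Concretely, if $|u(x)|/d \le \fint_{\text{(some probability measure)}} |Du|$, then $G(|u(x)|/d)\le \fint G(|Du|)$ against that same measure. Integrating over $x\in\Omega$ and applying Fubini to swap the order of integration — the kernel relating $x$ to the points $y$ where $|Du(y)|$ is sampled integrates to something bounded by $1$ after the diameter normalization — produces $\int_\Omega G(|u|/d)\dx \le \int_\Omega G(|Du|)\dy$. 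The normalization by $\diam(\Omega)$ is exactly what is needed to make the kernel's total mass at most $1$, so no extra constant appears.

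The main obstacle I anticipate is getting the constant to be exactly $1$ (rather than merely a dimensional constant $c(n)$), since the crude ray-integration bound typically loses a factor of $|\mathbb S^{n-1}|$ or a dimensional factor from the Fubini step. To handle this cleanly I would use the sharpest form of the pointwise fundamental-theorem-of-calculus estimate: extend $u$ by zero outside $\Omega$, fix a direction, and write $u(x)=-\int_0^{L(x)} \partial_s u(x+s e)\,ds$ where $L(x)\le d$; then $|u(x)|\le \int_0^{d}|Du(x+se)|\,ds = d\cdot \fint_0^{d}|Du(x+se)|\,ds$, so $|u(x)|/d$ is literally an average over a length-$d$ segment. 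Jensen gives $G(|u(x)|/d)\le \fint_0^d G(|Du(x+se)|)\,ds$, and integrating over $x$ in the slab and using Fubini/Tonelli on the one-dimensional translation (which is measure preserving and, because $Du$ is supported in $\Omega$, only picks up contributions from $\Omega$) yields $\int_\Omega G(|u|/d)\dx \le \int_{\R^n} G(|Du|)\dx = \int_\Omega G(|Du|)\dx$ with constant exactly $1$. This is the same mechanism as the classical proof that $\|u\|_{L^1}\le d\|Du\|_{L^1}$ on a domain of diameter $d$, lifted through the convex function $G$ via Jensen. I would present the $G(s)=s$ case as the template and note that the only property of $G$ used is convexity with $G(0)=0$, so the argument is essentially a line-by-line transcription of the classical one.
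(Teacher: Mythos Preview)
Your final one-directional argument --- extend $u$ by zero, apply the fundamental theorem of calculus along a fixed direction $e$ to obtain $|u(x)|/d \le \frac{1}{d}\int_0^d |Du(x+se)|\,ds$, then use that $G$ is increasing and convex (Jensen) to get $G(|u(x)|/d)\le \frac{1}{d}\int_0^d G(|Du(x+se)|)\,ds$, integrate in $x$ and use translation invariance --- is correct and delivers the inequality with constant exactly $1$. The paper itself does not supply a proof but refers to Lemma~2.2 of Lieberman, where precisely this argument is carried out, so your approach coincides with the cited one.
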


\begin{proof}
See Lemma 2.2 in \cite{Lieberman1}.
\end{proof}

\begin{Lemma}\label{parabolicsobolev}
 \emph{(Parabolic Sobolev's inequality)} Let $1\leq q<\min\{n,g_0\}$ and $B_R\times\Gamma\subset\R^{n+1}$. Then there exists a constant $c\equiv c(n,g_1,q)$ such that 
 \begin{multline*}
  \mint_{B_R\times\Gamma} G\left(\frac{|u|}{R}\right)^{1/q}|u|^{2(1-1/q^*)}\dx\dt\\
  \leq c\esssup_\Gamma \left(\mint_{B_R}|u|^2\dx\right)^{1-1/q^*}\left(\mint_{B_R\times\Gamma}G(|Du|)\dx\dt\right)^{1/q}
 \end{multline*}
for every $u\in V^{2,G}_0(B_R\times\Gamma)$.
\end{Lemma}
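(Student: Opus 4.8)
The plan is to follow the classical scheme for parabolic Sobolev inequalities: combine a purely spatial Sobolev--Poincar\'e estimate with the time-uniform control of $\mint_{B_R}|u|^2$ coming from $u\in L^\infty(0,T;L^2)$, the link between the two being H\"older's inequality in the space variable.

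For almost every $t\in\Gamma$ I would first apply H\"older's inequality in $B_R$ with the conjugate exponents $q^*$ and $(q^*)'=(1-1/q^*)^{-1}$, splitting the integrand as $G(|u|/R)^{1/q}\cdot|u|^{2(1-1/q^*)}$, to obtain
\[
\mint_{B_R}G\Big(\tfrac{|u|}{R}\Big)^{1/q}|u|^{2(1-1/q^*)}\dx\le\Big(\mint_{B_R}G\Big(\tfrac{|u|}{R}\Big)^{q^*/q}\dx\Big)^{1/q^*}\Big(\mint_{B_R}|u|^2\dx\Big)^{1-1/q^*},
\]
the exponents being chosen so that the $q^*$-th power of the first factor is $G(|u|/R)^{q^*/q}$ and the $(q^*)'$-th power of the second is exactly $|u|^2$. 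The second factor is dominated by $(\esssup_\Gamma\mint_{B_R}|u|^2)^{1-1/q^*}$, which no longer depends on $t$. It then remains to bound the first factor by $c(\mint_{B_R}G(|Du|)\dx)^{1/q}$ and integrate over $\Gamma$, using the concavity of $x\mapsto x^{1/q}$ (Jensen) to pass the time average inside the power.

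The heart of the matter is therefore the spatial inequality
\[
\Big(\mint_{B_R}G\big(|v|/R\big)^{q^*/q}\dx\Big)^{q/q^*}\le c(n,g_1,q)\mint_{B_R}G(|Dv|)\dx,\qquad v\in W^{1,G}_0(B_R).
\]
To prove it I would set $\phi:=G(|v|/R)^{1/q}$. Since $q<g_0$ one checks that $\phi\in W^{1,q}_0(B_R)$, with $|D\phi|\le\frac1{qR}G(|v|/R)^{1/q-1}g(|v|/R)|Dv|$; the key pointwise estimate is then, using $g(s)\le g_1 G(s)/s$ from \eqref{O.property3} and the lower bound $G(t)/G(s)\ge(t/s)^{g_0}$ for $t>s$ (again a consequence of \eqref{O.property3}),
\[
G(s)^{1-q}g(s)^q t^q\le g_1^q\,\frac{G(s)}{s^q}\,t^q\le g_1^q\big(G(s)+G(t)\big),\qquad s,t\ge0,
\]
where the last inequality is immediate for $t\le s$ and, for $t>s$, follows from $G(s)(t/s)^q\le G(s)(t/s)^{g_0}\le G(t)$ precisely because $q\le g_0$. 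Hence $R^q|D\phi|^q\le\frac{g_1^q}{q^q}\big(G(|v|/R)+G(|Dv|)\big)$. Applying the scale-invariant Sobolev inequality $\|\phi\|_{L^{q^*}(B_R)}\le c(n,q)\|D\phi\|_{L^q(B_R)}$ (valid as $1\le q<n$), rewriting it in terms of integral averages so that the factor $|B_R|^{1/n}\simeq R$ cancels the $R^{-1}$ in $|D\phi|$, raising to the power $q$, and using $q/q^*=(n-q)/n$, one arrives at
\[
\Big(\mint_{B_R}G(|v|/R)^{q^*/q}\dx\Big)^{q/q^*}\le c(n,g_1,q)\mint_{B_R}\big(G(|v|/R)+G(|Dv|)\big)\dx,
\]
and the first term on the right is absorbed into the second by Poincar\'e's inequality (Lemma~\ref{poincare}) together with the $\Delta_2$-property of $G$.

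Putting the pieces together --- applying the spatial inequality with $v=u(\cdot,t)$ for a.e.\ $t\in\Gamma$, inserting the time-uniform bound on the $L^2$ factor, and then integrating over $\Gamma$ with Jensen's inequality --- completes the proof. The main obstacle is the spatial inequality, and within it the observation that the hypothesis $q<g_0$ is exactly what powers the pointwise estimate $G(s)(t/s)^q\le G(s)+G(t)$; the remainder is bookkeeping with exponents and the standard scaling of the Sobolev inequality on balls. One should also treat the chain rule for $\phi=G(|u|/R)^{1/q}$ on $\{u=0\}$ in the usual way, but since $g_0/q>1$ the relevant quantities vanish there and no real difficulty arises.
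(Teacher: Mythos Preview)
Your proof is correct and follows the same overall architecture as the paper --- H\"older in space to separate $G(|u|/R)^{1/q}$ from $|u|^{2(1-1/q^*)}$, then a spatial Sobolev--type inequality for $G(|v|/R)^{1/q}$, then time integration via Jensen --- but the implementation of the spatial step is genuinely different. The paper does not work with $\phi=G(|v|/R)^{1/q}$ directly; instead it introduces auxiliary Young functions $H(s)=G(s^{1/q})$ and $F(s)=\widetilde H(s^q)^{1/q}$, proves $\widetilde F(s)\approx G(s)^{1/q}$, estimates $|D\widetilde F(|u|/R)|^q$ via Young's inequality with $\varepsilon$ for the pair $(H,\widetilde H)$, and then absorbs the self-term $\widetilde F(|u|/R)^q$ into the Sobolev left-hand side using H\"older (since $q<q^*$). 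Your argument replaces this Orlicz machinery by the elementary pointwise bound $G(s)(t/s)^q\le G(s)+G(t)$ (which is exactly where $q<g_0$ enters for you, via $G(t)/G(s)\ge(t/s)^{g_0}$) and absorbs the self-term with Poincar\'e (Lemma~\ref{poincare}) rather than by smallness of $\varepsilon$. Your route is shorter and more transparent; the paper's approach, by working with the ``right'' Young function $\widetilde F$ and the duality inequality~\eqref{Yf}, stays closer to the Orlicz formalism and avoids quoting Poincar\'e as a separate ingredient. Both approaches yield the same constant dependence $c(n,g_1,q)$.
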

\begin{proof}
Set $H(s):=G(s^{1/q})$ and $F(s):= \widetilde H(s^q)^{1/q}$. Simple calculations show that 
\[
 \frac{g_0}{q}\leq\frac{sH'(s)}{H(s)}\leq\frac{g_1}{q} \qquad\textrm{and}\qquad\frac{g_1}{g_1-q}\leq\frac{sF'(s)}{F(s)}\leq\frac{g_0}{g_0-q}
\]
for every $s > 0$. Moreover, the elementary inequality $a^q-b^q\geq(a-b)^q, a\geq b\geq 0,$ yields 
\begin{align*}
 F(s)&=\left(\sup_{r>0}\left(s^qr-G(r^{1/q})\right)\right)^{1/q}=\sup_{r>0}\big((sr)^q-G(r)\big)^{1/q}\\
 &= \sup_{\substack{r>0 \\ (sr)^q\geq G(r)}}\big((sr)^q-G(r)\big)^{1/q} \geq \sup_{\substack{r>0 \\ (sr)^q\geq G(r)}}\big(sr-G(r)^{1/q}\big)\\
 &=\sup_{r>0}\big(sr-G(r)^{1/q}\big),
\end{align*}
and thus $\widetilde F(s)\leq G(s)^{1/q}$ due to the fact that for Young functions $A$ and $B$, $A\leq B$ implies $\widetilde B\leq \widetilde A$. Now for almost every $t\in\Gamma$ we have 
\begin{align*}
 \left|D\widetilde F\left(\frac{|u(\cdot,t)|}{R}\right)\right|^q &= \widetilde F'\left(\frac{|u(\cdot,t)|}{R}\right)^q\left(\frac{|Du(\cdot,t)|}{R}\right)^q\\
 &\leq \frac{c(g_1,q)}{R^q}\bigg(\frac{\widetilde F(|u(\cdot,t)|/R)}{|u(\cdot,t)|/R}\bigg)^q|Du(\cdot,t)|^q\\
 &\leq \frac{c(g_1,q)}{R^q}\left[\eps\widetilde H\left(\bigg(\frac{\widetilde F(|u(\cdot,t)|/R)}{|u(\cdot,t)|/R}\bigg)^q\right) + \eps^{1-g_1/q} H\big(|Du(\cdot,t)|^q\big)\right]\\
 &=\frac{c(g_1,q)}{R^q}\left[\eps F\bigg(\frac{\widetilde F(|u(\cdot,t)|/R)}{|u(\cdot,t)|/R}\bigg)^q + \eps^{1-g_1/q} G(|Du(\cdot,t)|)\right]\\
 &\leq \frac{c(g_1,q)}{R^q}\left[\eps \widetilde F\left(\frac{|u(\cdot,t)|}{R}\right)^q + \eps^{1-g_1/q} G(|Du(\cdot,t)|)\right]
\end{align*}
by Young's inequality with $\eps\in(0,1)$, the definitions of $F$ and $H$, and \eqref{Yf}. This implies $\widetilde F\left(|u(\cdot,t)|/R\right) \in W^{1,q}_0(B_R)$ for almost every $t\in\Gamma$, since $u\in V^G_0(B_R\times\Gamma)$. Therefore we may apply the elliptic Sobolev's inequality to obtain 
\begin{align*}
 &\left(\mint_{B_R} \widetilde F\left(\frac{|u(\cdot,t)|}{R}\right)^{q^*}\dx\right)^{q/q^*}\leq c(n,q)R^q\mint_{B_R}\left|D\widetilde F\left(\frac{|u(\cdot,t)|}{R}\right)\right|^q\dx\\ \notag
 &\qquad\leq c(n,g_1,q)\mint_{B_R}\left[\eps \widetilde F\left(\frac{|u(\cdot,t)|}{R}\right)^q + \eps^{1-g_1/q} G(|Du(\cdot,t)|)\right]\dx\\
 &\qquad\leq \frac12 \left(\mint_{B_R} \widetilde F\left(\frac{|u(\cdot,t)|}{R}\right)^{q^*}\dx\right)^{q/q^*} + c(n,g_1,q) \mint_{B_R}G(|Du(\cdot,t)|)\dx,\\
 \end{align*}
where we also used H\"older's inequality and chose $\eps \equiv \eps(n,g_1,q)$ small enough. Hence 
\begin{equation}\label{ellipticsobolev}
 \left(\mint_{B_R} \widetilde F\left(\frac{|u(\cdot,t)|}{R}\right)^{q^*}\dx\right)^{1/q^*}\leq c\left(\mint_{B_R} G(|Du(\cdot,t)|)\dx\right)^{1/q}
\end{equation}
for almost every $t\in\Gamma$, where $c\equiv c(n,g_1,q)$. 

Using another elementary inequality $a^q-2^qb^q\leq 2^q(a-b)^q, a\geq b\geq 0,$ gives 
\begin{align*}
 F(s)&=\sup_{r>0}\big((sr)^q-G(r)\big)^{1/q}=\sup_{\substack{r>0 \\ (sr)^q\geq G(r)}}\left((sr)^q-2^q\left(\frac12 G(r)^{1/q}\right)^q\right)^{1/q}\\
 &\leq 2\sup_{\substack{r>0 \\ (2sr)^q\geq G(r)}}\left(sr-\frac12 G(r)^{1/q}\right)
 =\sup_{r>0}\left(sr-G\left(\frac r2\right)^{1/q}\right),
\end{align*}
and therefore 
\[
 \widetilde F(s) \geq G\left(\frac s2\right)^{1/q} \geq 2^{-g_1/q}G(s)^{1/q}.
\]
By combining this with H\"older's inequality and \eqref{ellipticsobolev} we finally obtain 
\begin{align*}
 \mint_{B_R\times\Gamma} G\left(\frac{|u|}{R}\right)^{1/q}&|u|^{2(1-1/q^*)}\dx\dt
 \leq c\mint_{B_R\times\Gamma} \widetilde F\left(\frac{|u|}{R}\right)|u|^{2(1-1/q^*)}\dx\dt\\
 &\leq c\mint_\Gamma\left(\mint_{B_R}\widetilde F\left(\frac{|u|}{R}\right)^{q^*}\dx\right)^{1/q^*}\left(\mint_ {B_R}|u|^2\dx\right)^{1-1/q^*}\dt\\
 &\leq c\esssup_\Gamma \left(\mint_ {B_R}|u|^2\dx\right)^{1-1/q^*}\left(\mint_{B_R\times\Gamma}G(|Du|)\dx\dt\right)^{1/q}.\qedhere
\end{align*}
\end{proof}
\begin{Remark}
 Since $g_0>2n/(n+2)$ we may take $q=2n/(n+2)$ in the previous Lemma, which yields 
\begin{multline*}
  \mint_{B_R\times\Gamma} G\left(\frac{|u|}{R}\right)^{1/2+1/n}|u|\dx\dt\\
  \leq c\esssup_\Gamma \left(\mint_{B_R}|u|^2\dx\right)^{1/2}\left(\mint_{B_R\times\Gamma}G(|Du|)\dx\dt\right)^{1/2+1/n}
 \end{multline*}
 for every $u\in V^{2,G}_0(B_R\times\Gamma)$.
\end{Remark}

\section{Useful results for solutions}\label{usefulresults}

In this section we collect and partly prove various results for weak solutions to \eqref{eq} that are standard for the evolutionary $p$-Laplace equation. The main result of the section is Theorem~\ref{superlimit}, which states that the limit of a uniformly bounded sequence of weak supersolutions is also a weak supersolution. 

We begin with the definition of weak solutions. 

\begin{Definition}
 A function $u$ is a \emph{weak solution} in $\Omega_T$, if $u\in V_{\textrm{loc}}^{2,G}(\Omega_T)$ and it satisfies 
\begin{equation}\label{weaksolution}
 -\int_{\Omega_T}u\,\partial_t\eta\dx\dt + \int_{\Omega_T}\A(Du)\cdot D\eta\dx\dt = 0
\end{equation}
for every $\eta \in C^{\infty}_c(\Omega_T)$. If instead of equality we have $\geq(\leq)$ for every nonnegative 
$\eta \in C^{\infty}_c(\Omega_T)$, we say that $u$ is a \emph{weak supersolution (subsolution)} in $\Omega_T$. 
\end{Definition}
\begin{Remark}\label{minussuperissub}
 If $u$ is a weak supersolution, it is easy to see that $-u$ is a weak subsolution to the same equation with $\A(Du)$ replaced by 
 $-\A(-Du)$. However, since the latter equation also satisfies the structural conditions \eqref{assumptionsO}, we may 
 assume without loss of generality that $\A(\xi) = -\A(-\xi)$. Therefore, if $u$ is a weak supersolution, then $-u$ is a weak subsolution.
\end{Remark}

The following Caccioppoli inequality is proven in \cite{BL}.

\begin{Lemma}[Caccioppoli inequality]\label{caccioppoli}
Let ${\mathcal K}:=\mathcal D\times(t_1,t_2)\Subset\Omega_T$ and let $u$ be a weak solution in ${\mathcal K}$. Then there exists a constant $c\equiv c(g_0,g_1,\nu,L)$ such that 
 \begin{multline*}
  \esssup_{(t_1,t_2)}\int_{\mathcal D}(u-k)_\pm^2\varphi^{g_1}\dx + \int_{\mathcal K}G\big(|D(u-k)_\pm|\big)\varphi^{g_1}\dx\dt\\
 \leq \int_{\mathcal D}\big[(u-k)_\pm^2\varphi^{g_1}\big](\cdot,t_1)\,dx+ c\int_{\mathcal K}\Big[G\big(|D\varphi|(u-k)_\pm\big)+(u-k)_\pm^2\left|\partial_t\varphi\right|\Big]\dx\dt
 \end{multline*}
for any $k\in\R$ and for every $\varphi \in W^{1,\infty}({\mathcal K})$ vanishing in a neighborhood of $\partial \mathcal D\times(t_1,t_2)$ and with $0\leq\varphi\leq 1$. The same inequality but only with the ``$+$'' sign holds for weak subsolutions.
\end{Lemma}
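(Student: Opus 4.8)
The plan is to run the classical parabolic energy argument: test the equation against $\pm(u-k)_\pm\varphi^{g_1}$, read off the coercive contributions, and absorb the remaining cross term by means of the Orlicz-type Young inequality together with \eqref{Yf}. Since a weak solution has no a priori weak time derivative, I would first regularize in time. Passing to Steklov averages $u_h(x,t):=\tfrac1h\int_t^{t+h}u(x,s)\ds$ and $[\A(Du)]_h$, the weak formulation \eqref{weaksolution} becomes, for a.e.\ $t$ and every $\phi\in W^{1,G}_0(\mathcal D)$,
\[
\int_{\mathcal D}\big(\partial_tu_h\,\phi+[\A(Du)]_h\cdot D\phi\big)\dx=0
\]
(with ``$\leq$'' for nonnegative $\phi$ when $u$ is only a subsolution). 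Fixing $\tau\in(t_1,t_2)$, choosing $\phi=(u_h-k)_\pm\varphi^{g_1}(\cdot,t)$, integrating in $t$ over $(t_1,\tau)$, and letting $h\to0$ is legitimate by the standard convergence properties of Steklov averages ($u_h\to u$, $Du_h\to Du$ in $L^G$ and $[\A(Du)]_h\to\A(Du)$ in $L^{\widetilde G}$, the last lying there by the second bound in \eqref{assumptions1} and \eqref{Yf}); for the contribution at the initial time one takes $t_1$ to be a Lebesgue point of $t\mapsto\int_{\mathcal D}(u-k)_\pm^2\varphi^{g_1}(\cdot,t)\dx$, so the estimate follows for a.e.\ such endpoint, which suffices.

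For the parabolic term, write $\partial_tu_h\,(u_h-k)_\pm=\tfrac12\partial_t(u_h-k)_\pm^2$; integrating by parts in $t$ produces $\tfrac12\int_{\mathcal D}(u-k)_\pm^2\varphi^{g_1}\dx$ at $t=\tau$, the initial term $\tfrac12\int_{\mathcal D}[(u-k)_\pm^2\varphi^{g_1}](\cdot,t_1)\dx$ with the right sign on the right-hand side, and the error $-\tfrac12\int_{\mathcal K}(u-k)_\pm^2\,\partial_t(\varphi^{g_1})\dx\dt$, which, since $\partial_t(\varphi^{g_1})=g_1\varphi^{g_1-1}\partial_t\varphi$ and $0\leq\varphi\leq1$, is at most $\tfrac{g_1}2\int_{\mathcal K}(u-k)_\pm^2|\partial_t\varphi|\dx\dt$. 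Taking the supremum over $\tau$ afterwards yields the $\esssup$ term on the left (this uses that the gradient term on the left is monotone in $\tau$). For the elliptic term, expand $D[(u-k)_\pm\varphi^{g_1}]=\varphi^{g_1}D(u-k)_\pm+g_1\varphi^{g_1-1}(u-k)_\pm D\varphi$; since $D(u-k)_\pm=\pm Du$ on $\{\pm(u-k)>0\}$ and vanishes elsewhere, and $\A(0)=0$, the first summand contributes $\A(D(u-k)_\pm)\cdot D(u-k)_\pm\,\varphi^{g_1}\geq\tilde\nu\,G(|D(u-k)_\pm|)\varphi^{g_1}$ by the first bound in \eqref{assumptions1}.

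The second summand is controlled, by the second bound in \eqref{assumptions1}, by $g_1\tilde L\,\tfrac{G(|D(u-k)_\pm|)}{|D(u-k)_\pm|}\,|D\varphi|(u-k)_\pm\,\varphi^{g_1-1}$, and this is the one genuinely non-routine point. I would handle it with Young's inequality with $\eps$ applied to the complementary pair $(\widetilde G,G)$, placing $\tfrac{G(|D(u-k)_\pm|)}{|D(u-k)_\pm|}\varphi^{g_1-1}$ in the $\widetilde G$-slot, followed by the estimate
\[
\widetilde G\!\left(\tfrac{G(|D(u-k)_\pm|)}{|D(u-k)_\pm|}\,\varphi^{g_1-1}\right)\leq\varphi^{g_1}\,\widetilde G\!\left(\tfrac{G(|D(u-k)_\pm|)}{|D(u-k)_\pm|}\right)\leq\varphi^{g_1}\,G(|D(u-k)_\pm|),
\]
where the first inequality is the $\Delta_2$-bound \eqref{gDelta2} for $\widetilde G$ — whose lower index $g_1/(g_1-1)$ is precisely what turns $\varphi^{g_1-1}$ into $\varphi^{g_1}$, since $\varphi\leq1$ — and the second is \eqref{Yf}. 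This bounds the cross term by $\eps\,c\,G(|D(u-k)_\pm|)\varphi^{g_1}+c(\eps)\,G(|D\varphi|(u-k)_\pm)$ with $c=c(g_0,g_1,L)$; choosing $\eps$ small lets the first piece be absorbed into the coercive term on the left, while the second is exactly the gradient term permitted on the right. Collecting everything, dividing by a fixed constant depending only on $g_0,g_1,\nu,L$, and using $\varphi^{g_1-1}\leq1$ once more for the time error gives the claim; for a subsolution only $\eta=(u-k)_+\varphi^{g_1}\geq0$ is an admissible test function, which is why only the ``$+$'' version survives.

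The main obstacle is thus the Orlicz bookkeeping in this cross term — producing exactly the weight $\varphi^{g_1}$ and matching the remaining Young term to $G(|D\varphi|(u-k)_\pm)$ — together with making the time regularization and the limit $h\to0$ fully rigorous, in particular the identification of the boundary term at $t=t_1$.
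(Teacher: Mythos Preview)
The paper does not actually prove this lemma; it simply cites \cite{BL}. Your argument is the standard parabolic energy proof adapted to Orlicz growth, and it is correct: the handling of the cross term via Young's inequality for the pair $(\widetilde G,G)$, the use of the $\Delta_2$-bound for $\widetilde G$ (whose lower index $g_1/(g_1-1)$ converts $\varphi^{g_1-1}$ into $\varphi^{g_1}$), and then \eqref{Yf}, is exactly the right Orlicz bookkeeping. One minor remark: in the ``$-$'' case the coercive term is literally $-\A(Du)\cdot D(u-k)_-\,\varphi^{g_1}=\A(-D(u-k)_-)\cdot(-D(u-k)_-)\,\varphi^{g_1}$, which by \eqref{assumptions1}$_1$ is $\geq\tilde\nu\,G(|D(u-k)_-|)\varphi^{g_1}$ without needing $\A$ to be odd, so your identification with $\A(D(u-k)_\pm)\cdot D(u-k)_\pm$ is a harmless abuse. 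The passage to the $\esssup$ in $\tau$ is usually done by dropping one of the two nonnegative left-hand terms, taking the sup, and then adding the two resulting inequalities; your phrasing ``the gradient term on the left is monotone in $\tau$'' encodes the same thing.
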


For the following comparison principle we add the extra assumption that the functions in question are continuous. This weaker version is sufficient for our purposes. Again, the proof can be found in \cite{BL}. 

\begin{Proposition}\label{comparisonprinciple}
 \emph{(Comparison principle)} Let $\mathcal K := \mathcal D\times(t_1,t_2)\subset\Omega_T$ and let $u\in C^0({\overline {\mathcal K}}^p)$ be a weak subsolution and $v\in C^0({\overline {\mathcal K}}^p)$ a weak supersolution in $\mathcal K$. If $u\leq v$ on $\partial_p \mathcal K$, then $u\leq v$ in $\overline {\mathcal K}^p$.
\end{Proposition}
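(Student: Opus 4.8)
The plan is a standard energy comparison argument: test the difference of the (Steklov-averaged) weak formulations of $u$ and $v$ with a truncation of $(u-v)_+$, and then invoke the monotonicity of $\A$.

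Fix $\eps>0$. Since $u,v\in C^0(\overline{\mathcal K}^p)$ and $u\leq v$ on $\partial_p\mathcal K$, the set $\{(x,t)\in\overline{\mathcal K}^p:u(x,t)-v(x,t)\geq\eps\}$ is a compact subset of $\overline{\mathcal K}^p$ disjoint from $\partial_p\mathcal K$; hence it is contained in $\mathcal K\cup(\overline{\mathcal D}\times\{t_2\})$, and in particular $u-v<\eps$ in a neighbourhood (relative to $\overline{\mathcal K}^p$) of $\partial_p\mathcal K$. Consequently $w:=(u-v-\eps)_+$ satisfies $w(\cdot,t)\in W^{1,G}_0(\mathcal D)$ for every $t\in(t_1,t_2]$ and $w(\cdot,t)\equiv 0$ for $t$ near $t_1$. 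Because $u,v$ carry no time derivative a priori, I would pass to Steklov averages $[\,\cdot\,]_h$: both inequalities then hold in the form $\int_{\mathcal D}\big(\partial_t[u]_h\,\phi + [\A(Du)]_h\cdot D\phi\big)\dx\leq 0$ (and $\geq 0$ with $v$) for a.e.\ $t$ and every nonnegative $\phi\in W^{1,G}_0(\mathcal D)$, this class being admissible by density together with the integrability $\A(Du)\in L^{\widetilde G}$, which follows from \eqref{assumptions1} and \eqref{Yf}.

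Next I would subtract the two averaged inequalities, test with $\phi=w_h:=([u]_h-[v]_h-\eps)_+$ — which for $h$ small lies in $W^{1,G}_0(\mathcal D)$ and vanishes near $t_1$ by the remarks above — and integrate in time over $(t_1,\tau)$ for an arbitrary $\tau\in(t_1,t_2)$. The parabolic term telescopes, $\int_{t_1}^\tau\!\int_{\mathcal D}\partial_t([u]_h-[v]_h)\,w_h\dx\dt=\tfrac12\int_{\mathcal D}w_h^2(\cdot,\tau)\dx$, since $w_h(\cdot,t_1)=0$. Letting $h\to 0$ — using $[u]_h\to u$, $[v]_h\to v$ uniformly, $Dw_h\to Dw$ in $L^G$ and $[\A(D\cdot)]_h\to\A(D\cdot)$ in $L^{\widetilde G}$, together with H\"older's inequality in Orlicz spaces — yields
\[
\tfrac12\int_{\mathcal D}w^2(\cdot,\tau)\dx + \int_{\{u-v>\eps\}\cap(\mathcal D\times(t_1,\tau))}\big\langle\A(Du)-\A(Dv),\,Du-Dv\big\rangle\dx\dt\leq 0,
\]
where I used $Dw=(Du-Dv)\chi_{\{u-v>\eps\}}$ a.e.

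Finally, the monotonicity of $\A$ (the Remark following Lemma~\ref{monotonicity}) makes the second integrand nonnegative, so both terms vanish; in particular $\int_{\mathcal D}w^2(\cdot,\tau)\dx=0$ for every $\tau\in(t_1,t_2)$, whence $u-v\leq\eps$ a.e.\ in $\mathcal K$ and, by continuity, on $\overline{\mathcal K}^p$. Letting $\eps\to 0$ gives $u\leq v$ in $\overline{\mathcal K}^p$. The main technical point to handle carefully is the Steklov-averaging step: justifying that $w_h$ is an admissible test function (density of $W^{1,G}_0(\mathcal D)$-functions with compact spatial support), dealing with the time endpoint $t_2$ in the definition of $[\,\cdot\,]_h$ (work on $(t_1,t_2-h)$ and let $h\to 0$), and performing the limit passage in the diffusion term within the Orlicz-space framework. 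The continuity hypothesis is used precisely to guarantee that $w$ vanishes near the whole parabolic boundary, so that no boundary terms arise.
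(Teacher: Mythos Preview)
The paper does not actually supply a proof of this proposition; it states that ``the proof can be found in \cite{BL}'' and moves on. Your argument is the standard energy comparison: Steklov-average the two weak formulations, subtract, test with $([u]_h-[v]_h-\eps)_+$, use the chain rule in time and the monotonicity of $\A$ in space, and conclude by letting $h\to 0$ and then $\eps\to 0$. This is correct and is exactly the argument one expects in \cite{BL}; the continuity assumption is used precisely where you indicate, to make the truncation $(u-v-\eps)_+$ vanish near the full parabolic boundary so that it is an admissible test function.

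One small point worth tightening: in the limit $h\to 0$ of the elliptic term you need $Dw_h\to Dw$ in $L^G$, and this involves the convergence of $\chi_{\{[u]_h-[v]_h>\eps\}}$ to $\chi_{\{u-v>\eps\}}$. This is harmless because $[u]_h-[v]_h\to u-v$ uniformly and one may choose $\eps$ so that $|\{u-v=\eps\}|=0$ (all but countably many $\eps$ work), or alternatively keep the inequality before passing to the limit and use Fatou in the monotone term. Otherwise your outline is complete.
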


We obtain the maximum principle as an easy corollary. 

\begin{Corollary}\label{maximumprinciple}
 \emph{(Maximum principle)} Let $\mathcal K\subset\Omega_T$ and let $u\in C^0(\overline {\mathcal K}^p)$ be a weak solution in $\mathcal K$. Then 
 \begin{equation}\label{max1}
  \inf_{\partial_p\mathcal K}u\leq u\leq\sup_{\partial_p\mathcal K}u
 \end{equation}
in $\overline {\mathcal K}^p$ and, moreover, 
 \begin{equation}\label{max2}
  \sup_{\overline {\mathcal K}^p}|u|=\sup_{\partial_p\mathcal K}|u|.
 \end{equation}
\end{Corollary}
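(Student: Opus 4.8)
The plan is to derive the maximum principle directly from the comparison principle (Proposition~\ref{comparisonprinciple}) by using constant functions as competitors. Since $u$ is a weak solution in $\mathcal K$, it is simultaneously a weak subsolution and a weak supersolution. Observe that any constant function $c\in\R$ is a weak solution of \eqref{eq}: indeed $\partial_t c=0$ and $\A(Dc)=\A(0)=0$, so both integrals in \eqref{weaksolution} vanish. Constants are trivially continuous on $\overline{\mathcal K}^p$.

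For the first inequality in \eqref{max1}, set $M:=\sup_{\partial_p\mathcal K}u$, which is finite because $u\in C^0(\overline{\mathcal K}^p)$ and $\partial_p\mathcal K$ is compact. Then $u$ is a weak subsolution, the constant function $M$ is a weak supersolution, both are continuous on $\overline{\mathcal K}^p$, and $u\leq M$ on $\partial_p\mathcal K$ by the definition of $M$. Proposition~\ref{comparisonprinciple} then gives $u\leq M$ on all of $\overline{\mathcal K}^p$, which is the right-hand inequality. For the left-hand inequality, set $m:=\inf_{\partial_p\mathcal K}u$ and apply the comparison principle with the constant $m$ as a weak subsolution and $u$ as a weak supersolution, using $m\leq u$ on $\partial_p\mathcal K$; this yields $m\leq u$ on $\overline{\mathcal K}^p$. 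Together these establish \eqref{max1}.

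For \eqref{max2}, note first that $\partial_p\mathcal K\subset\overline{\mathcal K}^p$ gives the trivial bound $\sup_{\partial_p\mathcal K}|u|\leq\sup_{\overline{\mathcal K}^p}|u|$. For the reverse inequality, fix any point $z\in\overline{\mathcal K}^p$. By \eqref{max1} we have $\inf_{\partial_p\mathcal K}u\leq u(z)\leq\sup_{\partial_p\mathcal K}u$, hence $|u(z)|\leq\max\{|\inf_{\partial_p\mathcal K}u|,|\sup_{\partial_p\mathcal K}u|\}\leq\sup_{\partial_p\mathcal K}|u|$. Taking the supremum over $z\in\overline{\mathcal K}^p$ gives $\sup_{\overline{\mathcal K}^p}|u|\leq\sup_{\partial_p\mathcal K}|u|$, and combined with the trivial bound this proves \eqref{max2}.

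This argument is essentially bookkeeping once the comparison principle is in hand; there is no real obstacle. The only points requiring a word of care are the finiteness of the sup and inf over $\partial_p\mathcal K$ (guaranteed by continuity and compactness, so that the constant competitors are genuine real-valued functions) and the verification that constants satisfy the structural setup — but this is immediate from $\A(0)=0$. I would present the proof in a compact form, spelling out the two applications of Proposition~\ref{comparisonprinciple} and then the short deduction of \eqref{max2} from \eqref{max1}.
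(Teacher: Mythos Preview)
Your proof is correct and is precisely the intended argument: the paper presents this as an ``easy corollary'' of the comparison principle without writing out a proof, and comparing $u$ against the constants $\sup_{\partial_p\mathcal K}u$ and $\inf_{\partial_p\mathcal K}u$ is exactly how one deduces it. One small quibble: with the paper's definition $\partial_p\mathcal K=(\overline{\mathcal D}\times\{t_1\})\cup(\partial\mathcal D\times(t_1,t_2))$ the parabolic boundary is not literally compact, so your finiteness justification needs a word of adjustment (either note that if $M=+\infty$ the inequality $u\le M$ is vacuous, or observe that in all applications $u$ is bounded on $\overline{\mathcal K}^p$); this does not affect the substance of the argument.
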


The next pasting lemma states that if we replace a supersolution by a smaller supersolution in some part of the cylinder such that they coincide on the boundary, the resulting function is still a supersolution. Again, we assume that the functions are continuous. The proof follows ideas used in~\cite{KKP}. 

\begin{Lemma}\label{superglued}
 Let $Q_1:=K_1\times(t_1,\tau_1), Q_2:=K_2\times(t_2,\tau_2)\subset\Omega_T$ be such that $\tau_1\leq\tau_2$, and let $v_1\in C^0(\overline Q_1^p)$ and 
 $v_2\in C^0(\overline Q_2^p)$ be weak supersolutions (subsolutions) in $Q_1$ and $Q_2$, respectively. If $v_1\geq v_2$ 
 ($v_1\leq v_2$) in $Q_1\cap Q_2$ and $v_1=v_2$ on $Q_1\cap\partial_pQ_2$, then 
 \[
  v = \left\{ \begin{array}{ll}
v_1 & \textrm{in}\,\, Q_1\setminus Q_2\\
v_2 & \textrm{in}\,\, Q_1\cap Q_2\\
\end{array}\right.
 \]
is a weak supersolution (subsolution) in $Q_1$.
\end{Lemma}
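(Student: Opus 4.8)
The strategy is the standard gluing argument, adapted to the Orlicz setting. We treat the supersolution case; the subsolution case follows by applying it to $-v_i$ and using Remark~\ref{minussuperissub}. The function $v$ is well-defined and continuous on $\overline Q_1^p$: on $Q_1\cap\partial_pQ_2$ the two definitions agree by hypothesis, and elsewhere they do not overlap. We must verify that
\[
 -\int_{Q_1}v\,\partial_t\eta\dx\dt + \int_{Q_1}\A(Dv)\cdot D\eta\dx\dt \geq 0
\]
for every nonnegative $\eta\in C^\infty_c(Q_1)$. The essential point is that $v\in V^{2,G}_\loc(Q_1)$ with $Dv=Dv_1$ a.e. on $Q_1\setminus Q_2$ and $Dv=Dv_2$ a.e. on $Q_1\cap Q_2$; this is a standard fact for functions glued continuously along a (locally Lipschitz) hypersurface, and the spatial and time integrability follow from those of $v_1$ and $v_2$ on the respective pieces. (On the portion of $\partial Q_2$ lying in $Q_1$, which is $\{t=t_2\}$-type or a lateral piece $\partial K_2\times(t_2,\tau_2)$, the gradient contributes nothing since it is a null set in the first case and, in the second, $v$ is defined by $v_1$ on one side and $v_2$ on the other.)

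The heart of the argument is a test-function manipulation. Fix a nonnegative $\eta\in C^\infty_c(Q_1)$. I would write $\eta = \eta\,(1-\theta_j) + \eta\,\theta_j$, where $\theta_j$ is a smooth approximation, increasing to $\chi_{Q_1\cap Q_2}$, of the characteristic function of $Q_1\cap Q_2$, chosen so that $\theta_j$ vanishes near $\partial_pQ_2\cap Q_1$ and $\partial_t\theta_j$ concentrates near the "bottom" $\overline{K_2}\times\{t_2\}$ where $v_1=v_2$. Testing the weak supersolution inequality for $v_1$ on $Q_1$ with $\eta(1-\theta_j)$ (legitimate since this is nonnegative and compactly supported in $Q_1$), and testing the weak supersolution inequality for $v_2$ on $Q_2$ with $\eta\,\theta_j$ (nonnegative, and its support lies in $Q_2$ because $\theta_j$ is supported well inside $Q_2$ in the spatial and future-time directions), and then adding the two inequalities, the gradient terms combine to $\int_{Q_1}\A(Dv)\cdot D\eta$ plus error terms of the form $\int \A(Dv_1)\cdot D\theta_j\,\eta$ and $-\int\A(Dv_2)\cdot D\theta_j\,\eta$, while the time terms combine to $-\int_{Q_1}v\,\partial_t\eta$ plus $\int(v_1-v_2)\partial_t\theta_j\,\eta$.

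The remaining task is to show the error terms vanish as $j\to\infty$. For the time-derivative error $\int(v_1-v_2)\,\partial_t\theta_j\,\eta$, this is exactly where the hypothesis $v_1=v_2$ on $Q_1\cap\partial_pQ_2$ and $\tau_1\leq\tau_2$ enters: $\partial_t\theta_j$ is supported in a shrinking neighborhood of the parabolic boundary portion $\overline{K_2}\times\{t_2\}$ (the "future" cap $\{t=\tau_2\}$ is avoided since $\tau_2\geq\tau_1$ and $\eta$ has compact support in $Q_1$, hence vanishes near $t=\tau_1$ and beyond), on which $v_1-v_2\to 0$ by continuity; combined with a uniform $L^1$ bound on $\partial_t\theta_j$ this forces the limit to be zero. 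For the spatial gradient errors, $D\theta_j$ is supported near the lateral piece $\partial K_2\times(t_2,\tau_2)$ of $\partial_pQ_2$, whose intersection with the support of $\eta$ we may assume has measure tending to zero; using Hölder's inequality in Orlicz spaces together with the growth bound $|\A(\xi)|\leq\tilde L\,G(|\xi|)/|\xi|$ from \eqref{assumptions1} (so $\A(Dv_i)\in L^{\widetilde G}$ by \eqref{Yf}) and $\|D\theta_j\|_{L^G}$ estimated via Lemma~\ref{characteristicnorm} over the shrinking support, these terms go to zero. Passing to the limit then yields the desired inequality for $v$. The main obstacle is setting up the cutoffs $\theta_j$ correctly so that each piece of the test function is admissible in the respective domain and so that the error terms localize to exactly the sets where we have control — in particular handling the mixed parabolic-boundary geometry of $\partial_pQ_2\cap Q_1$ and making sure no uncontrolled contribution arises from the lateral boundary where only an $L^1$-smallness of the domain, not pointwise vanishing of $v_1-v_2$, is available for the gradient terms.
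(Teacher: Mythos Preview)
Your decomposition $\eta=\eta(1-\theta_j)+\eta\theta_j$ with a \emph{geometric} cutoff $\theta_j$ supported in $Q_2$ runs into a genuine obstruction at the spatial gradient error. After summing the two tested inequalities, the cross term
\[
-\int_{Q_1}\big(\A(Dv_1)-\A(Dv_2)\big)\cdot D\theta_j\,\eta\dx\dt
\]
must be shown to be $\geq -o(1)$. Your plan is to use H\"older's inequality together with Lemma~\ref{characteristicnorm} and the fact that $\supp D\theta_j$ shrinks. But that lemma bounds $\|\chi_E\|_{L^G}$, not $\|D\theta_j\|_{L^G}$: on a strip of width $\delta_j$ about the lateral boundary one necessarily has $|D\theta_j|\approx 1/\delta_j$, so
\[
\int G(|D\theta_j|)\dx\dt \approx G(1/\delta_j)\cdot\delta_j \geq c\,\delta_j^{1-g_0}\to\infty,
\]
and the alternative route $\frac{1}{\delta_j}\int_{\text{strip}}|\A(Dv_i)|\leq \frac{c}{\delta_j}\|\chi_{\text{strip}}\|_{L^G}\leq c\,\delta_j^{1/g_1-1}$ also blows up. Pointwise vanishing of $v_1-v_2$ on the lateral piece does not help here, since the integrand involves $\A(Dv_i)$, not $v_1-v_2$.

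The paper avoids this by choosing the cutoff to depend on $v_1-v_2$ rather than on geometry: with
\[
w_\varepsilon=\min\Big\{1,\Big(\tfrac{v_1-v_2-\varepsilon}{\varepsilon}\Big)_+\Big\}\chi_{Q_1\cap Q_2},
\]
one has $Dw_\varepsilon=\frac{1}{\varepsilon}(Dv_1-Dv_2)\chi_{\{\varepsilon\leq v_1-v_2<2\varepsilon\}}$, and the gradient cross term becomes
\[
\frac{1}{\varepsilon}\int\big(\A(Dv_1)-\A(Dv_2)\big)\cdot(Dv_1-Dv_2)\,\chi_{\{\ldots\}}\,\varphi\dx\dt\geq 0
\]
by the monotonicity of $\A$; it therefore has the right sign and can be discarded outright. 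The time cross term $\int(v_1-v_2)\partial_tw_\varepsilon\,\varphi$ is then $O(\varepsilon)$ after integrating by parts. Finally $w_\varepsilon\to\chi_{Q_1\cap Q_2}$ a.e.\ (up to the set $\{v_1=v_2\}$, where it is irrelevant since there $v_1=v_2$ and $Dv_1=Dv_2$ a.e.). The missing idea in your plan is exactly this use of monotonicity via a level-set cutoff; a purely geometric $\theta_j$ does not produce a sign, and the error does not vanish.
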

\begin{proof}
Fix a nonnegative $\varphi\in C^{\infty}_c(Q_1)$, $\varepsilon>0$, and define 
\[
 w_{\varepsilon} := \left\{ \begin{array}{lll}
1, & v_1 \geq v_2+2\varepsilon\\[2mm]
\frac{v_1-v_2-\varepsilon}{\varepsilon}, & v_2+\varepsilon \leq v_1 < v_2 + 2\varepsilon\\[2mm]
0, & v_1 < v_2 + \varepsilon
\end{array}\right..
\]
in $Q_1\cap Q_2$ and $w_{\varepsilon}=0$ in $Q_1\setminus Q_2$. The test functions 
\[
 \eta_1=(1- w_{\varepsilon} )\varphi\qquad\textrm{and}\qquad\eta_2= w_{\varepsilon} \varphi
\]
have compact support in $Q_1$ and $Q_1\cap Q_2$, respectively, and can thus be regularized using mollification; we shall proceed formally. Now, assuming $v_1$ and $v_2$ are weak supersolutions, summing up their respective weak formulations we obtain 
\[
 \begin{split}
 0&\leq-\int_{Q_1} v_1 \partial_t\big((1- w_{\varepsilon} )\varphi\big)\dx\dt 
 +\int_{Q_1} \A(Dv_1) \cdot D\big((1- w_{\varepsilon} )\varphi\big)\dx\dt\\
 &\quad-\int_{Q_1} v_2 \partial_t( w_{\varepsilon} \varphi)\dx\dt 
 +\int_{Q_1} \A(Dv_2) \cdot D( w_{\varepsilon} \varphi)\dx\dt\\
 &=-\int_{Q_1}\big( v_1 (1- w_{\varepsilon} )+ v_2  w_{\varepsilon} \big)\partial_t\varphi\dx\dt
 +\int_{Q_1} (v_1-v_2) \partial_t w_{\varepsilon} \varphi\dx\dt\\
 &\quad\quad+\int_{Q_1}\big( \A(Dv_1) (1- w_{\varepsilon} )+ \A(Dv_2)  w_{\varepsilon} 
 \big)\cdot D\varphi\dx\dt\\
 &\quad\quad\quad-\int_{Q_1} \big(\A(Dv_1)-\A(Dv_2)\big) \cdot D w_{\varepsilon} \varphi\dx\dt.
 \end{split}
\]
The monotonicity of $\A$ yields 
\[
 \begin{split}
 &\int_{Q_1} \big(\A(Dv_1)-\A(Dv_2)\big) \cdot D w_{\varepsilon} \varphi\dx\dt\\
 &\qquad=\frac{1}{\varepsilon}\int_{Q_1}\big(\A(Dv_1)-\A(Dv_2)\big)
 \cdot(Dv_1-Dv_2)\chi_{\{v_2+\varepsilon\leq v_1<v_2+2\varepsilon\}}\varphi\dx\dt\geq 0,
 \end{split}
\]
and by integration by parts we get 
\[
\begin{split}
 \int_{Q_1}& (v_1-v_2) \partial_t w_{\varepsilon} \varphi\dx\dt=\eps\int_{Q_1}(1+w_\eps)\partial_tw_\eps\varphi\dx\dt\\
 &=-\frac{\varepsilon}{2}\int_{Q_1}(1+w_\eps)^2\partial_t\varphi\dx\dt\leq 2\varepsilon\|\partial_t\varphi\|_{L^\infty(Q_1)}|Q_1|,
 \end{split}
\]
since $w_\eps\leq 1$. Therefore, 
\[
 \begin{split}
  0&\leq-\int_{Q_1}\big(v_1(1-w_{\varepsilon})+v_2w_{\varepsilon}\big)\partial_t\varphi\dx\dt
  +2\varepsilon\|\partial_t\varphi\|_{L^\infty(Q_1)}|Q_1|\\
  &\quad+\int_{Q_1}\big(\A(Dv_1)(1-w_{\varepsilon})+\A(Dv_2)w_{\varepsilon}\big)\cdot D\varphi\dx\dt.
 \end{split}
\]
Since $w_{\varepsilon}=0$ in $Q_1\setminus Q_2$ and $v_1\geq v_2$ in $Q_1\cap Q_2$, we have 
\[
 \left|w_{\varepsilon}-\chi_{Q_1\cap Q_2}\right|
 \leq \chi_{Q_1\cap Q_2\cap\{v_2\leq v_1<v_2 + 2\varepsilon\}}\to 0
\]
as $\varepsilon\to 0$, and thus, we obtain 
\[
 \begin{split}
  0&\leq-\lim_{\varepsilon\to 0}\int_{Q_1}\big(v_1(1-w_{\varepsilon})+v_2w_{\varepsilon}\big)\partial_t\varphi\dx\dt\\
  &\quad+\lim_{\varepsilon\to 0}\int_{Q_1}\big(\A(Dv_1)(1-w_{\varepsilon})+\A(Dv_2)w_{\varepsilon}\big)\cdot D\varphi\dx\dt\\
  &=-\int_{Q_1}\left(v_1\chi_{Q_1\setminus Q_2}+v_2\chi_{Q_1\cap Q_2}\right)\partial_t\varphi\dx\dt\\
  &\quad+\int_{Q_1}\left(\A(Dv_1)\chi_{Q_1\setminus Q_2}+\A(Dv_2)\chi_{Q_1\cap Q_2}\right)\cdot D\varphi\dx\dt\\
  &=-\int_{Q_1} v\,\partial_t\varphi\dx\dt+\int_{Q_1}\A(Dv)\cdot D\varphi\dx\dt,
 \end{split}
\]
showing that $v$ is a weak supersolution in $Q_1$. 

If $v_1$ and $v_2$ are assumed to be weak subsolutions such that $v_1\leq v_2$, then by applying the above reasoning to $-v_1$ and $-v_2$ 
we see that $-v$ is a weak supersolution, and the result follows. 
\end{proof}

The following lemma can be proved in a very similar fashion as the previous one.

\begin{Lemma}\label{minsuper}
 Let $u$ and $v$ be weak supersolutions (subsolutions) in $Q\subset\Omega_T$. Then also $\min\{u,v\}$ is a weak supersolution 
 ($\max\{u,v\}$ is a weak subsolution) in $Q$.
\end{Lemma}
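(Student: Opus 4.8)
The plan is to mimic the proof of Lemma~\ref{superglued} but with a much simpler auxiliary function, exploiting the fact that here both functions are supersolutions on the \emph{same} cylinder $Q$, so no gluing along a parabolic boundary is needed. First I would reduce to the supersolution case: if $u,v$ are subsolutions, then by Remark~\ref{minussuperissub} the functions $-u,-v$ are supersolutions, and $\max\{u,v\}=-\min\{-u,-v\}$, so it suffices to prove the statement for supersolutions and $\min\{u,v\}$.

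Next, fix a nonnegative $\varphi\in C^\infty_c(Q)$ and, for $\varepsilon>0$, introduce a Lipschitz cutoff of the set where $v$ is smaller than $u$, for instance
\[
 w_\varepsilon:=\left\{\begin{array}{lll}
1,& v\leq u-2\varepsilon\\[2mm]
\frac{u-v-\varepsilon}{\varepsilon},& u-2\varepsilon< v\leq u-\varepsilon\\[2mm]
0,& v>u-\varepsilon
\end{array}\right.,
\]
so that $w_\varepsilon\to\chi_{\{v<u\}}$ as $\varepsilon\to0$, $0\le w_\varepsilon\le1$, and $w_\varepsilon$ is (for a.e.\ time) a valid Sobolev multiplier. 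Use $\eta_1=w_\varepsilon\varphi$ in the weak formulation for $v$ and $\eta_2=(1-w_\varepsilon)\varphi$ in the weak formulation for $u$, add the two inequalities, and collect terms exactly as in Lemma~\ref{superglued}: the parabolic terms combine to $-\int_Q\big(v w_\varepsilon+u(1-w_\varepsilon)\big)\partial_t\varphi$ plus an error $\int_Q(v-u)\partial_t w_\varepsilon\,\varphi$, and the elliptic terms combine to $\int_Q\big(\A(Dv)w_\varepsilon+\A(Du)(1-w_\varepsilon)\big)\cdot D\varphi$ minus the crucial monotonicity term
\[
 \int_Q\big(\A(Dv)-\A(Du)\big)\cdot Dw_\varepsilon\,\varphi
 =\frac1\varepsilon\int_Q\big(\A(Dv)-\A(Du)\big)\cdot(Dv-Du)\,\chi_{\{u-2\varepsilon< v\le u-\varepsilon\}}\varphi\le0
\]
by the monotonicity of $\A$ (Lemma~\ref{monotonicity} and the ensuing Remark), which therefore only helps the inequality. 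The time error term is handled exactly as before: since $|u-v|\le\varepsilon$ on $\{u-2\varepsilon< v\le u-\varepsilon\}$ and $w_\varepsilon$ is monotone in $u-v$, writing $(v-u)\partial_t w_\varepsilon\,\varphi$ as a time derivative of something of order $\varepsilon$ gives a bound $c\,\varepsilon\,\|\partial_t\varphi\|_{L^\infty(Q)}|Q|\to0$.

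Finally, pass to the limit $\varepsilon\to0$. Since $w_\varepsilon\to\chi_{\{v<u\}}$ pointwise and boundedly, dominated convergence gives
\[
 0\le-\int_Q\big(v\,\chi_{\{v<u\}}+u\,\chi_{\{v\ge u\}}\big)\partial_t\varphi\dx\dt
 +\int_Q\big(\A(Dv)\chi_{\{v<u\}}+\A(Du)\chi_{\{v\ge u\}}\big)\cdot D\varphi\dx\dt,
\]
and since $\min\{u,v\}=v\chi_{\{v<u\}}+u\chi_{\{v\ge u\}}$ with $D\min\{u,v\}=Dv\chi_{\{v<u\}}+Du\chi_{\{v\ge u\}}$ a.e.\ (standard fact for Sobolev functions, using that $Dv=Du$ a.e.\ on $\{u=v\}$), this is exactly the weak supersolution inequality for $\min\{u,v\}$ tested against $\varphi$. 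One should also note $\min\{u,v\}\in V^{2,G}_{\loc}(Q)$, which is immediate since the class is a lattice (the $\Delta_2$-condition makes $L^G$ a lattice and $|D\min\{u,v\}|\le|Du|+|Dv|$ a.e.). The main obstacle, as in Lemma~\ref{superglued}, is the rigorous justification of using $w_\varepsilon\varphi$ as a test function — it must be regularized by mollification in time since supersolutions need not have a weak time derivative — but this is entirely analogous to the preceding lemma and I would simply refer to that argument, proceeding formally.
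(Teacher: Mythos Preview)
Your proposal is correct and follows exactly the approach the paper intends: it explicitly says that Lemma~\ref{minsuper} ``can be proved in a very similar fashion'' as Lemma~\ref{superglued}, and your truncation $w_\varepsilon$, choice of test functions, use of monotonicity to discard the cross term, and $O(\varepsilon)$ handling of the time error are precisely the adaptation required. There are two harmless sign slips in your write-up (since $Dw_\varepsilon=\tfrac1\varepsilon(Du-Dv)\chi$, the displayed monotonicity term should read $(Du-Dv)$ rather than $(Dv-Du)$, and the time error is $\int_Q(u-v)\partial_t w_\varepsilon\,\varphi$), but the signs work out so that the monotonicity term is indeed nonpositive and can be dropped, and the time error is $O(\varepsilon)$ exactly as you claim.
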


\subsection{Convergence properties of supersolutions}\label{convergenceproperties}

We end the section by proving an important convergence result that is crucial in proving the existence of a solution to the obstacle problem. For this we need the following lemma. The proof follows the guidelines of Theorem~6 in~\cite{LM}, see also \cite{KKP,BDGO}. 

\begin{Lemma}\label{gradientconvergence}
 Let $Q\subset\Omega_T, M\geq 1$, and let $(u_i)_{i=1}^{\infty}$ be a sequence of weak supersolutions in $Q$ such that $|u_i|\leq M$ for every $i\in\N$ and $u_i\to u$ almost everywhere in $Q$. Then $Du_i\to Du$ almost everywhere in $Q$.
\end{Lemma}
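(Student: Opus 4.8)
The plan is to establish almost-everywhere convergence of the gradients by combining a Caccioppoli-type energy bound, which yields weak compactness of $(Du_i)$, with a monotonicity trick that upgrades weak convergence to strong (hence a.e., up to a subsequence) convergence. The starting point is that $u_i \to u$ a.e.\ with $|u_i| \le M$, so by dominated convergence $u_i \to u$ in $L^G_{\loc}(Q)$ and in every $L^q_{\loc}(Q)$. To get a bound on the gradients, one would like to use the Caccioppoli inequality of Lemma~\ref{caccioppoli}; the obstruction is that this inequality is stated for weak \emph{solutions}, not supersolutions. I would therefore first record the one-sided Caccioppoli estimate valid for supersolutions (testing the weak formulation with $(u_i-k)_-\varphi^{g_1}$ against a standard cutoff $\varphi$), or alternatively work locally with truncations: the functions $u_i$ are bounded by $M$, so choosing $k$ comparable to $-M$ makes $(u_i-k)_-$ handle the supersolution inequality and gives, on a fixed subcylinder $Q'\Subset Q$,
\[
\esssup_{t}\int (u_i-k)_-^2\varphi^{g_1}\dx + \int_{Q'} G(|Du_i|)\dx\dt \le c(M,\varphi,Q),
\]
uniformly in $i$. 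This provides a uniform bound on $\|Du_i\|_{L^G(Q')}$, and since $L^G$ is reflexive (as $G$ and $\widetilde G$ both satisfy $\Delta_2$), we may pass to a subsequence with $Du_i \rightharpoonup Du$ weakly in $L^G(Q')$; the limit is identified as $Du$ because $u_i\to u$ in $L^1_\loc$.

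The heart of the argument is to show
\[
\int_{Q'} \langle \A(Du_i)-\A(Du),\, Du_i - Du\rangle\, \theta \dx\dt \longrightarrow 0
\]
for a suitable nonnegative cutoff $\theta$, since by the strict monotonicity Lemma~\ref{monotonicity} this integrand is nonnegative and controls $g'(|Du_i|+|Du|)|Du_i-Du|^2$, which — via Lemma~\ref{Vglemma} applied in reverse, i.e.\ the fact that $V_g$ has continuous inverse — forces $Du_i \to Du$ in measure and hence a.e.\ along a further subsequence. To prove the displayed convergence one writes it as
\[
\int \langle \A(Du_i), Du_i - Du\rangle\theta - \int \langle \A(Du), Du_i-Du\rangle\theta;
\]
the second term vanishes by weak convergence $Du_i \rightharpoonup Du$ (note $\A(Du)\theta \in L^{\widetilde G}$ by \eqref{assumptions1} and \eqref{Yf}). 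For the first term, one would test the weak formulation of the equation for $u_i$ (and for $u$, once $u$ is known to be a supersolution — or one avoids this by testing only with $u_i$) with something like $(u_i - u)\theta$, modulo a time-regularization of $u$ (Steklov averaging or mollification in $t$) to make $(u_i-u)\theta$ an admissible test function despite the lack of a time derivative of $u$. This produces
\[
\int \A(Du_i)\cdot D\big((u_i-u)\theta\big) = \int u_i\,\partial_t\big((u_i-u)\theta\big) \;(+\ \text{error from sign / regularization}),
\]
and the parabolic (time-derivative) terms are handled by the standard trick: after mollification, $\int u_i \partial_t((u_i-u)_\delta\theta)$ is manipulated so that the leading quadratic term $\int \partial_t u_i \cdot (u_i - u) \theta$ combines into $\tfrac12\partial_t (u_i-u)^2$-type expressions plus terms that vanish as $i\to\infty$ using $u_i\to u$ strongly in $L^2_\loc$ and the uniform bound $|u_i|\le M$; the lower-order piece $\int \A(Du_i)\cdot D\theta\,(u_i-u)$ vanishes because $\A(Du_i)$ is bounded in $L^{\widetilde G}$ and $(u_i-u)\to 0$ strongly.

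The main obstacle — and the place I would spend the most care — is the time-derivative handling, because supersolutions need not have $\partial_t u \in L^2$, so $(u_i-u)\theta$ is not literally admissible and the ``integration by parts in time'' has to be justified via a mollification scheme (e.g.\ the Landes-type regularization $u\mapsto u_\delta$ solving $\partial_t u_\delta = (u - u_\delta)/\delta$, or Steklov averages), carefully tracking that all the $\delta$-error terms stay controlled uniformly in $i$ and vanish in the iterated limit $i\to\infty$ then $\delta\to 0$. A secondary subtlety is the sign issue: since $u_i$ is only a supersolution, testing with $(u_i-u)\theta$ (which has no sign) is not directly allowed; the standard remedy, following~\cite{LM,KKP}, is to split into the positive and negative parts $(u_i-u)_\pm$ or to exploit that the ``bad'' monotonicity term has a favorable sign, so that one only needs an \emph{upper} bound on $\int\langle\A(Du_i)-\A(Du),Du_i-Du\rangle\theta$, which is exactly what the one-sided supersolution inequality delivers. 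Once a.e.\ convergence of $Du_i$ is obtained on $Q'$, a covering argument extends it to all of $Q$, completing the proof.
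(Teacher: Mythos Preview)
Your overall strategy (uniform energy bound $\Rightarrow$ compactness, then a monotonicity trick to upgrade to strong/a.e.\ convergence) is the right shape, and it is the shape of the paper's proof as well. But there is a genuine gap in the way you propose to run the monotonicity step, and the paper's argument differs from yours at exactly this point.

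You compare each $u_i$ to the limit $u$, aiming to show $\int \langle \A(Du_i)-\A(Du), Du_i-Du\rangle\theta \to 0$ by testing the supersolution inequality for $u_i$ with (a regularized version of) $(u_i-u)\theta$. The sign problem you flag is not resolved by your suggested remedies. Splitting into $(u_i-u)_\pm$ only gives the \emph{upper} bound you need on the set $\{u_i<u\}$: testing with the nonnegative function $(u_i-u)_-\theta$ indeed yields an inequality in the right direction there, but on $\{u_i>u\}$ the admissible nonnegative test function is $(u_i-u)_+\theta$, and the supersolution inequality then produces a \emph{lower} bound on $\int_{\{u_i>u\}}\A(Du_i)\cdot(Du_i-Du)\theta$, which is useless. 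Since $u$ is not (yet) known to be a supersolution, you cannot get the missing direction from an equation for $u$. The Landes/Steklov regularization of $u$ does nothing to fix this sign obstruction.

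The paper avoids the limit $u$ altogether and runs a Cauchy-sequence argument on pairs $u_j,u_k$. The key device is a truncated difference: set
\[
w_{jk}:=\max\{-\delta,\min\{\delta,u_j-u_k\}\},
\]
and test the supersolution inequality for $u_j$ with $\eta_j:=(\delta-w_{jk})\tilde\varphi$ and that for $u_k$ with $\eta_k:=(\delta+w_{jk})\tilde\varphi$. Both test functions are nonnegative because $|w_{jk}|\le\delta$, so the sign problem disappears; summing the two inequalities produces the monotonicity term $\int(\A(Du_j)-\A(Du_k))\cdot Dw_{jk}\,\tilde\varphi$ with the correct sign. Moreover the truncation forces every remaining term (including the time-derivative contribution, after a simple integration by parts) to be $O(\delta)$, so no delicate Landes-type regularization is needed. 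One then uses Lemmas~\ref{monotonicity} and~\ref{Vglemma} to conclude that $(V_g(Du_i))$ is Cauchy in $L^1_{\loc}$, hence convergent a.e.\ along a subsequence; the continuous inverse of $V_g$ transfers this to $Du_i$, and the limit is identified as $Du$ \emph{a posteriori}. The $\pm\delta$ shift is precisely the idea missing from your outline.

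A minor point: for the energy bound the paper applies the (subsolution) Caccioppoli inequality of Lemma~\ref{caccioppoli} directly to the nonnegative weak subsolution $M-u_i$ with $k=0$, which is cleaner than deriving a separate supersolution estimate.
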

\begin{proof}
 Let $Q'\Subset Q$ and choose $\widetilde Q$ such that $Q'\Subset\widetilde Q\Subset Q$. Let $\varphi\in C^{\infty}_c(Q), \tilde\varphi\in C^{\infty}_c(\widetilde Q)$ be such that $0\leq\varphi,\tilde\varphi\leq 1$, $\varphi=1$ in $\widetilde Q$, $\tilde\varphi=1$ in $Q'$, and 
\[
\left|\left|\partial_t\varphi\right|\right|_{L^{\infty}(Q)},\left|\left|\partial_t\tilde\varphi\right|\right|_{L^{\infty}(Q)},||D\varphi||_{L^{\infty}(Q)},||D\tilde\varphi||_{L^{\infty}(Q)}\leq C
\]
for some $C\geq 1$. Applying the Caccioppoli estimate, Lemma~\ref{caccioppoli}, to the nonnegative weak subsolution $M-u_i$ (with $k=0$) gives 
\[
 \begin{split}
 \int_{\widetilde Q}G(|Du_i|)\dx\dt &\leq c\int_{Q}(M-u_i)^2\left|\partial_t\varphi\right|\dx\dt\\
  &\qquad\qquad\qquad +c\int_{Q}G\big(|D\varphi|(M-u_i)\big)\dx\dt\\
  &\leq c\big(M^2C+M^{g_1}C^{g_1}\big)|Q|=: M_1.
 \end{split}
\]
Thus, the sequence $(Du_i)_{i=1}^{\infty}$ is uniformly bounded in $L^G(\widetilde Q)$. Moreover, the sequence 
$\left(\A(Du_i)\right)_{i=1}^{\infty}$ is uniformly bounded in $L^{\widetilde G}(\widetilde Q)$, since 
\[
 \begin{split}
 \int_{\widetilde Q}\widetilde G(|\A(Du_i)|)\dx\dt &\leq \int_{\widetilde Q}\widetilde G\left(\frac{g_1L}{g_0-1}\frac{G(|Du_i|)}{|Du_i|}\right)\dx\dt\\
 &\leq c\int_{\widetilde Q}G(|Du_i|)\dx\dt
 \leq cM_1=: M_2.
 \end{split}
\]
by $\eqref{assumptionsO}_2$ and \eqref{Yf}. Assuming without loss of generality that $M_2\geq 1$ it is then easy to see that also 
\[
 ||\A(Du_i)||_{L^{\widetilde G}(\widetilde Q)}\leq M_2.
\]

Denote for $j,k\in\N$ 
\[
 w_{jk} := \left\{ \begin{array}{lll}
\delta, & u_j-u_k>\delta\\[2mm]
u_j-u_k, &|u_j-u_k|\leq\delta\\[2mm]
-\delta, & u_j-u_k<-\delta
\end{array}\right.,
\]
where $\delta>0$. Choose 
\[
 \eta_{j}=(\delta-w_{jk})\tilde\varphi\qquad\textrm{and}\qquad\eta_{k}=(\delta+w_{jk})\tilde\varphi
\]
as the test functions in \eqref{weaksolution} for the weak supersolutions $u_j$ and $u_k$. Observe that $\eta_j$ and $\eta_k$ are nonnegative. These formal choices can be justified by standard regularization methods. Summing up the weak formulations yields 
\begin{equation}\label{A}
 \begin{split}
  0&\leq-\int_{Q}u_j\partial_t\eta_{j}\dx\dt + \int_{Q}\A(Du_j)\cdot D\eta_{j}\dx\dt\\
 &\quad-\int_{Q}u_k\partial_t\eta_{k}\dx\dt + \int_{Q}\A(Du_k)\cdot D\eta_{k}\dx\dt\\
 &=\int_{\widetilde Q}(u_j-u_k)\partial_tw_{jk}\,\tilde\varphi\dx\dt+\int_{\widetilde Q}(u_j-u_k)w_{jk}\partial_t\tilde\varphi\dx\dt\\
 &\quad-\delta\int_{\widetilde Q}(u_j+u_k)\partial_t\tilde\varphi\dx\dt-\int_{\widetilde Q}\big(\A(Du_j)-\A(Du_k)\big)\cdot Dw_{jk}\,\tilde\varphi\dx\dt\\
 &\quad-\int_{\widetilde Q}\big(\A(Du_j)-\A(Du_k)\big)\cdot D\tilde\varphi\,w_{jk}\dx\dt\\
 &\quad+\delta\int_{\widetilde Q}\big(\A(Du_j)+\A(Du_k)\big)\cdot D\tilde\varphi\dx\dt.
 \end{split}
\end{equation}
Integration by parts gives 
\begin{align*}
 \int_{\widetilde Q}&(u_j-u_k)\partial_tw_{jk}\,\tilde\varphi\dx\dt=-\int_{\widetilde Q}\int_{-2M}^{u_j-u_k}s\chi_{\{|s|\leq\delta\}}\ds\,\partial_t\tilde\varphi\dx\dt\\
 &\leq \int_{\widetilde Q}\int_{-2M}^{u_j-u_k}|s|\chi_{\{|s|\leq\delta\}}\ds|\partial_t\tilde\varphi|\dx\dt
 \leq 4MC|Q|\delta.
\end{align*}
Since $|w_{jk}|\leq\delta$, we can estimate the second and third term by $2MC|Q|\delta$. By H\"older's inequality and Lemma~\ref{characteristicnorm} we obtain 
\[
 \begin{split}
 &-\int_{\widetilde Q}\big(\A(Du_j)-\A(Du_k)\big)\cdot D\tilde\varphi\,w_{jk}\dx\dt \leq C\delta\int_{\widetilde Q}|\A(Du_j)-\A(Du_k)|\dx\dt\\
 &\qquad\qquad\qquad\qquad\qquad\leq 2C\delta||\chi_{\widetilde Q}||_{L^G(\widetilde Q)}||\A(Du_j)-\A(Du_k)||_{L^{\widetilde G}(\widetilde Q)}\\
 &\qquad\qquad\qquad\qquad\qquad\leq 4M_2C\max\left\{|Q|^{\frac{1}{g_1}},|Q|^{\frac{1}{g_0}}\right\}\delta.
 \end{split}
\]
Exactly the same estimate holds also for the last term in \eqref{A}. Thus, we have 
\[
 \int_{Q_{\delta}}(\A(Du_j)-\A(Du_k))\cdot (Du_j-Du_k)\,\tilde\varphi\dx\dt \leq c\delta,
\]
where $Q_{\delta}:=\widetilde Q\cap\{|u_j-u_k|\leq\delta\}$ and $c$ depends on $g_0,g_1,\nu,L,M,C$, and $|Q|$ but not on $j$ and $k$. 

Next by Lemma~\ref{Vglemma} and Lemma~\ref{monotonicity} we obtain 
\[
 \begin{split}
 \int_{Q_{\delta}}&|V_g(Du_j)-V_g(Du_k)|^2\tilde\varphi\dx\dt \leq c\int_{Q_{\delta}}g'(|Du_j|+|Du_k|)|Du_j-Du_k|^2\tilde\varphi\dx\dt\\
 &\qquad\leq c\int_{Q_{\delta}}(\A(Du_j)-\A(Du_k))\cdot (Du_j-Du_k)\tilde\varphi\dx\dt\leq c\delta.
 \end{split}
\]
Using H\"older's inequality and the fact that 
\[
\int_Q|V_g(Du_i)|^2\dx\dt\leq g_1\int_QG(|Du_i|)\dx\dt\leq g_1M_1 
\]
for all $i\in\N$ gives 
 \begin{align*}
  \int_{Q'}&|V_g(Du_j)-V_g(Du_k)|\dx\dt\\
  &\leq\int_{Q_{\delta}}|V_g(Du_j)-V_g(Du_k)|\tilde\varphi\dx\dt+\int_{\widetilde Q\setminus Q_{\delta}}|V_g(Du_j)-V_g(Du_k)|\tilde\varphi\dx\dt\\
  &\leq|Q|^{\frac{1}{2}}\left(\int_{Q_{\delta}}|V_g(Du_j)-V_g(Du_k)|^2\tilde\varphi\dx\dt\right)^{\frac{1}{2}}\\
  &\qquad\qquad\qquad\qquad+|\widetilde Q\setminus Q_{\delta}|^{\frac{1}{2}}
  \left(\int_{Q}|V_g(Du_j)-V_g(Du_k)|^2\dx\dt\right)^{\frac{1}{2}}\\
  &\leq c\delta^{\frac{1}{2}}+c|\widetilde Q\setminus Q_{\delta}|^{\frac{1}{2}}.
 \end{align*}
Fix $\varepsilon>0$ and choose $\delta$ such that $c\delta^{\frac{1}{2}}<\frac{\varepsilon}{2}$ holds. Since the sequence 
$(u_i)_{i=1}^{\infty}$ converges almost everywhere, and therefore in measure, we may choose $j$ and $k$ large enough such that $c|Q\setminus Q_{\delta}|^{\frac{1}{2}}<\frac{\varepsilon}{2}$. Thus, we have shown that the sequence $\left(V_g(Du_i)\right)_{i=1}^{\infty}$ is Cauchy in $L^1(Q')$ and therefore there exists a function $w\in L^1(Q')$ such that $V_g(Du_i)\to w$ in $L^1(Q')$ as $i\to\infty$. This implies that there exists a subsequence $(V_g(Du_{i_j}))_{j=1}^{\infty}$ converging to $w$ almost everywhere in $Q'$. Now the fact that $V_g$ has a continuous inverse yields 
\[
 Du_{i_j}=V_g^{-1}(V_g(Du_{i_j}))\to V_g^{-1}(w)=:v
\]
almost everywhere in $Q'$.

By Fatou's lemma we obtain 
\[
\int_{Q'}G(|v|)\dx\dt \leq\liminf_{j\to\infty}\int_{Q'}G\left(\left|Du_{i_j}\right|\right)\dx\dt\leq M_1,
\]
that is, $v\in L^G(Q')$. Now, since $u_i\to u$ almost everywhere in $Q'$, we have for any $\phi\in C^{\infty}_c(Q')$ that 
\[
\int_{Q'}uD\phi\dx\dt=\lim_{j\to\infty}\int_{Q'}u_{i_j}D\phi\dx\dt=-\lim_{j\to\infty}\int_{Q'}Du_{i_j}\,\phi\dx\dt
=-\int_{Q'}v\phi\dx\dt
\]
by Lebesgue's dominated convergence theorem and the definition of weak gradient, showing that $v=Du$. Thus, we have $Du_{i_j}\to Du$ 
almost everywhere in $Q'$ for any $Q'\Subset Q$, which implies that $Du_{i_j}\to Du$ almost everywhere in $Q$. 

To show that, in fact, the whole sequence $(Du_i)_{i=1}^{\infty}$ converges to $Du$ almost everywhere assume the contrary. Then there 
exists a subsequence $(Du_{i_k})_{k=1}^{\infty}$ such that for some $\varepsilon'>0$ we have $\left|Du_{i_k}-Du\right|\geq \varepsilon'$ for every $k$. However, the above reasoning holds if we replace $u_i$ by $u_{i_k}$ and thus, we find a subsubsequence 
$(u_{i_{k_j}})_{j=1}^{\infty}$ such that $|Du_{i_{k_j}}-Du|\to 0$ almost everywhere as $j\to \infty$. This is a contradiction, which proves that 
$Du_i\to Du$ almost everywhere in $Q$, and the proof is complete. 
\end{proof}

\begin{Theorem}\label{superlimit}
 Let $Q\subset\Omega_T, M\geq 1$, and let $(u_i)_{i=1}^{\infty}$ be a sequence of weak supersolutions in $Q$ such that $|u_i|\leq M$ for every $i\in\N$ and $u_i\to u$ almost everywhere in $Q$. Then $u$ is a weak supersolution in $Q$.
\end{Theorem}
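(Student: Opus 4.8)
The plan is to pass to the limit in the weak formulation of the supersolution inequality
\[
 -\int_{Q} u_i\,\partial_t\eta\dx\dt + \int_{Q}\A(Du_i)\cdot D\eta\dx\dt \geq 0,
\]
valid for every nonnegative $\eta\in C^\infty_c(Q)$. The first term is easy: since $|u_i|\leq M$ and $u_i\to u$ almost everywhere, Lebesgue's dominated convergence theorem gives $\int_Q u_i\,\partial_t\eta\to\int_Q u\,\partial_t\eta$, and in particular $u\in L^\infty(Q)$. The issue is therefore the flux term, and here the key input is Lemma~\ref{gradientconvergence}, which tells us that $Du_i\to Du$ almost everywhere in $Q$; combined with the continuity of $\A$ this yields $\A(Du_i)\to\A(Du)$ almost everywhere in $Q$. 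It remains to upgrade this pointwise convergence to convergence of the integrals $\int_Q \A(Du_i)\cdot D\eta$, for which I need an equi-integrability statement.

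To get equi-integrability I would work on a fixed subcylinder $\widetilde Q$ with $\supp\eta\Subset\widetilde Q\Subset Q$ and reuse the Caccioppoli-based estimate already appearing in the proof of Lemma~\ref{gradientconvergence}: applying Lemma~\ref{caccioppoli} to the nonnegative weak subsolution $M-u_i$ (with $k=0$) gives a bound
\[
 \int_{\widetilde Q}G(|Du_i|)\dx\dt \leq M_1
\]
with $M_1$ independent of $i$, and then $\eqref{assumptionsO}_2$ together with \eqref{Yf} gives $\int_{\widetilde Q}\widetilde G(|\A(Du_i)|)\dx\dt\leq M_2$, again uniformly in $i$. Since $\widetilde G$ is a Young function satisfying the $\Delta_2$-condition, a uniform bound on $\int_{\widetilde Q}\widetilde G(|\A(Du_i)|)$ forces the family $(\A(Du_i))_i$ to be equi-integrable on $\widetilde Q$ (de la Vallée-Poussin / Dunford--Pettis). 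Combining equi-integrability with the almost-everywhere convergence $\A(Du_i)\to\A(Du)$, Vitali's convergence theorem yields $\A(Du_i)\to\A(Du)$ in $L^1(\widetilde Q)$, and since $D\eta\in L^\infty$ is supported in $\widetilde Q$ we get $\int_Q\A(Du_i)\cdot D\eta\to\int_Q\A(Du)\cdot D\eta$.

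Passing to the limit in the two terms then gives
\[
 -\int_{Q} u\,\partial_t\eta\dx\dt + \int_{Q}\A(Du)\cdot D\eta\dx\dt \geq 0
\]
for every nonnegative $\eta\in C^\infty_c(Q)$. Finally I must check that $u$ has the right regularity, i.e. $u\in V^{2,G}_{\loc}(Q)$: the uniform $L^\infty$ bound handles the $L^\infty(0,T;L^2)$ and $L^G$ parts locally, while $Du\in L^G_{\loc}(Q)$ follows from Fatou's lemma applied to $\int G(|Du_i|)$ against the uniform bound $M_1$ (as is in fact already done at the end of the proof of Lemma~\ref{gradientconvergence}, where $v=Du$ is shown to lie in $L^G(Q')$). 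This last point is really just bookkeeping; the only substantive step is the equi-integrability of the fluxes, which is where the $\Delta_2$-property of $\widetilde G$ does the work, and this is the step I would expect to spell out most carefully.
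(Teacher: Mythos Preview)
Your proof is correct and matches the paper's approach: dominated convergence for the time term, and for the flux term the combination of Lemma~\ref{gradientconvergence} (giving $\A(Du_i)\to\A(Du)$ a.e.) with the Caccioppoli-based uniform bound $\int_{\widetilde Q}\widetilde G(|\A(Du_i)|)\leq M_2$. The only difference is packaging: you cite de~la~Vall\'ee--Poussin and Vitali to pass from these two facts to $L^1$-convergence, whereas the paper writes the same argument out by hand, splitting on the set $E_\gamma=\{|\A(Du_i)-\A(Du)|\geq\gamma\}$ and controlling $\int_{E_\gamma}|\A(Du_i)-\A(Du)|$ via H\"older's inequality together with Lemma~\ref{characteristicnorm}, using that $|E_\gamma|\to 0$ since a.e.\ convergence implies convergence in measure. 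One small remark on your write-up: the property of $\widetilde G$ that drives de~la~Vall\'ee--Poussin is its superlinearity at infinity ($\widetilde G(s)/s\to\infty$, which follows from the lower exponent $g_1/(g_1-1)>1$ in \eqref{O.property3}), not the $\Delta_2$-condition.
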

\begin{proof}
Let $\eta\in C^{\infty}_c(Q)$ and choose $Q'\Subset Q$ such that $\supp\eta\subset Q'$. Since 
\[
 \begin{split}
  &\left|\int_{Q}\A(Du_i)\cdot D\eta\dx\dt-\int_{Q}\A(Du)\cdot D\eta\dx\dt\right|\\
  &\qquad\qquad\qquad\leq\int_{Q'}|\A(Du_i)-\A(Du)||D\eta|\dx\dt\\
  &\qquad\qquad\qquad\leq ||D\eta||_{L^{\infty}\left(Q'\right)}||\A(Du_i)-\A(Du)||_{L^1\left(Q'\right)}
 \end{split}
\]
and 
\[
 \begin{split}
  \left|\int_{Q}u_i\partial_t\eta\dx\dt-\int_{Q}u\partial_t\eta\dx\dt\right|
  &\leq\int_{Q'}|u_i-u||\partial_t\eta|\dx\dt\\
  &\leq ||\partial_t\eta||_{L^{\infty}\left(Q'\right)}||u_i-u||_{L^1\left(Q'\right)},
 \end{split}
\]
it suffices to show that $u_i\to u$ and $\A(Du_i)\to\A(Du)$ in $L^1(Q')$. The former follows by Lebesgue's dominated convergence theorem, since $u_i\to u$ almost everywhere in $Q$ and $|u_i-u|\leq 2M \in L^1(Q')$.

To show the latter, we observe that by Lemma~\ref{gradientconvergence} $Du_i\to Du$ almost everywhere in $Q'$. This implies that $\A(Du_i)\to\A(Du)$ almost everywhere in $Q'$ by the continuity of $\A$. A completely analogous application of the Caccioppoli estimate as in Lemma~\ref{gradientconvergence} gives a constant $M_2\geq 1$ independent of $i$ such that 
\[
\int_{Q'}\widetilde G(|\A(Du_i)|)\dx\dt\leq M_2.
\]
Therefore, by Fatou's lemma we have 
\[
 \int_{Q'}\widetilde G(|\A(Du)|)\dx\dt\leq\liminf_{i\to\infty}\int_{Q'}\widetilde G(|\A(Du_i)|)\dx\dt\leq M_2,
\]
which implies 
\[
 ||\A(Du)||_{L^{\widetilde G}(Q')},||\A(Du_i)||_{L^{\widetilde G}(Q')}\leq M_2.
\]

Denote $E_{\gamma}:=Q'\cap\{|\A(Du_i)-\A(Du)|\geq\gamma\}$, where $\gamma>0$ will be chosen shortly. By H\"older's inequality and Lemma~\ref{characteristicnorm} we obtain 
\[
 \begin{split}
  \int_{Q'}|\A(Du_i)-\A(Du)|\dx\dt &= \int_{Q'\setminus E_{\gamma}}|\A(Du_i)-\A(Du)|\dx\dt\\
  &\quad+\int_{E_{\gamma}}|\A(Du_i)-\A(Du)|\dx\dt\\
  &\leq\gamma|Q|+2||\chi_{E_{\gamma}}||_{L^G(Q')}||\A(Du_i)-\A(Du)||_{L^{\widetilde G}(Q')}\\
  &\leq\gamma|Q|+4M_2\max\left\{|E_{\gamma}|^{\frac{1}{g_1}},|E_{\gamma}|^{\frac{1}{g_0}}\right\}.
 \end{split}
\]
Now, for a fixed $\varepsilon>0$, we first choose $\gamma=\frac{\varepsilon}{2|Q|}$ and then $i$ large enough such that
\[
 4M_2\max\left\{|E_{\gamma}|^{\frac{1}{g_1}},|E_{\gamma}|^{\frac{1}{g_0}}\right\}<\frac{\varepsilon}{2}.
\]
This can be done, since in a set with finite measure convergence almost everywhere implies convergence in measure. Thus, we have shown 
that $\A(Du_i)\to\A(Du)$ in $L^1(Q')$ and the proof is complete. \qedhere
\end{proof}

\section{Qualitative properties of solutions}\label{qualitativeproperties}

In this section we prove that weak supersolutions always have a lower semicontinuous representative. For this we need boundedness of nonnegative weak subsolutions which is also an interesting result in its own right. For the evolutionary $p$-Laplace equation the lower semicontinuity of weak supersolutions was first proved in \cite{Kuusi}. We remark that the results of this section hold also for more general vector fields $\A(x,t,\xi)$ being measurable in $(x,t)$, continuous in $\xi$, and satisfying the weaker structural conditions \eqref{assumptions1}.

In order to choose the correct geometry we need to understand the scaling of the equation. Suppose $u$ is a weak solution in $B_\rho\times(-\theta,0)$. Then 
\[
 \bar u(x,t):=\frac{u\left(\rho x,\theta t\right)}{k}
\]
is a weak solution in $B_1\times(-1,0)$ with $\A$ replaced by 
\[
 \bar
 \A(\xi):=\frac{k}{\rho}G\left(\frac{k}{\rho}\right)^{-1}\A\left(\frac{k}{\rho}\xi\right)
\]
if and only if 
\begin{equation}\label{geometry}
 \theta=k^2G\left(\frac{k}{\rho}\right)^{-1}.
\end{equation}
Observe that $\bar\A$ satisfies the same structural conditions as $\A$ with $g$ replaced by 
\[
 \bar g(s):= \frac{k}{\rho}G\left(\frac{k}{\rho}\right)^{-1}g\left(\frac{k}{\rho}s\right),
\]
and furthermore, $\bar g$ satisfies the Orlicz condition \eqref{O.property} with the same constants as $g$. 

We begin by proving an {\em a priori} result using a standard De Giorgi iteration. 

\begin{Lemma}\label{k_lemma}
 Let $(x_0,t_0)\in\Omega_T$, $k>0$, and take $\rho,\theta>0$ such that 
\[
 Q(\rho,\theta):=B_\rho(x_0)\times(t_0-\theta,t_0)\Subset\Omega_T
\]
and \eqref{geometry} holds. If $u$ is a nonnegative weak subsolution in $\Omega_T$, then there exists a constant $\sigma\equiv\sigma(n,g_0,g_1,\nu,L)\in(0,1)$ such that whenever 
 \begin{equation}\label{k_condition}
  \mint_{Q(\rho,\theta)}\left(G\left(\frac{u}{\rho}\right)+\frac{u^2}{\theta}\right)\dx\dt\leq \sigma G\left(\frac{k}{\rho}\right)
 \end{equation}
holds, we have 
\[
 \esssup_{Q(\rho/2,\theta/2)}u\leq k.
\]
\end{Lemma}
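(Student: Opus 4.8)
The plan is to run a standard De Giorgi iteration on the rescaled equation, using the Caccioppoli inequality (Lemma~\ref{caccioppoli}) together with the parabolic Sobolev inequality. First I would use the scaling discussed just before the statement: set $\bar u(x,t) = u(\rho x + x_0, \theta t + t_0)/k$ (after translating $(x_0,t_0)$ to the origin), which is a nonnegative weak subsolution in $B_1\times(-1,0)$ for the rescaled vector field $\bar\A$, and $\bar g$ still satisfies \eqref{O.property} with the same constants $g_0,g_1$. Under this change of variables the hypothesis \eqref{k_condition} becomes
\[
 \mint_{Q_1}\left(\bar G(|\bar u|) + |\bar u|^2\right)\dx\dt \leq c\,\sigma
\]
where $Q_1 = B_1\times(-1,0)$, $\bar G(s)=\int_0^s\bar g$, and we used $\bar G(1) = \frac{k^2}{\theta}G(k/\rho)^{-1}\cdot\theta/k^2 \cdot$(constant) $\approx 1$ by \eqref{geometry} and the normalization \eqref{normalization}; the conclusion becomes $\esssup_{B_{1/2}\times(-1/2,0)}\bar u \leq 1$. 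So it suffices to prove the statement in the normalized setting, and I will drop the bars.

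Next I would set up the standard iteration. Fix the increasing levels $k_j = 1 - 2^{-j}$ and the shrinking cylinders $Q_j = B_{r_j}\times(-r_j^{\,2}\,\text{-type},0)$ — more precisely $r_j = \frac12(1+2^{-j})$ with the time interval scaled consistently so that $Q_j \searrow Q_\infty = B_{1/2}\times(-1/2,0)$ and $Q_0 = Q_1$; choose cutoff functions $\varphi_j$ with $\varphi_j=1$ on $Q_{j+1}$, supported in $Q_j$, with $|D\varphi_j|\lesssim 2^j$ and $|\partial_t\varphi_j|\lesssim 4^j$. Define the energy quantities
\[
 Y_j := \mint_{Q_j}\left(G\big((u-k_j)_+\big) + (u-k_j)_+^2\right)\dx\dt.
\]
Applying Caccioppoli to $(u-k_j)_+$ with the cutoff $\varphi_j^{g_1}$ controls $\esssup_t\int (u-k_j)_+^2\varphi_j^{g_1} + \int G(|D(u-k_j)_+|)\varphi_j^{g_1}$ by $c\,4^j$ times the integral of $G((u-k_j)_+) + (u-k_j)_+^2$ over $Q_j$ (using the $\Delta_2$-property \eqref{gDelta2} to absorb the $|D\varphi_j|$-power into a constant times $2^{j g_1}$), i.e. by $c\,C^j Y_j$ for a dimensional $C>1$. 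Then I feed this energy bound into the parabolic Sobolev inequality (Lemma~\ref{parabolicsobolev}, or the remark after it with $q = 2n/(n+2)$) applied to $v = (u-k_{j+1})_+\varphi_j$, to produce a gain of integrability. The key chaining step is the usual one: on $Q_{j+1}$ where $u-k_{j+1}>0$ one has $u-k_j > k_{j+1}-k_j = 2^{-(j+1)}$, so the superlevel set $\{u>k_{j+1}\}\cap Q_{j+1}$ has measure bounded by $2^{(j+1)\alpha}\int (u-k_j)_+^\alpha$ for suitable exponents, and Hölder plus the Sobolev gain converts $Y_{j+1} \le c\, C^j Y_j^{1+\beta}$ for some $\beta>0$ depending only on $n,g_0,g_1$.

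The delicate point — and where the fact that the equation may be simultaneously degenerate and singular bites — is bookkeeping the exponents in this recursion when $G$ is a general Orlicz function rather than a pure power, so that the Sobolev gain is only uniform after using the two-sided bounds $g_0\le sG'(s)/G(s)\le g_1$ and the $\Delta_2$ inequalities \eqref{gDelta2}, \eqref{O.property3}. Concretely, one must be careful that $G((u-k_j)_+)/G(1)$-type comparisons, the split $|u|^2$ vs.\ $G(|u|)$, and the passage from the mixed quantity in Lemma~\ref{parabolicsobolev} back to $Y_{j+1}$ all only cost constants depending on $g_0,g_1,n$; since $u\le 1$ on the relevant set (a posteriori, or one may first note $(u-k_j)_+\le 1$ is not automatic, so instead one keeps $G((u-k_j)_+)\le c(u-k_j)_+^{g_0}$ when $(u-k_j)_+\le1$), the arguments of $G$ stay bounded and the Orlicz non-homogeneity is harmless. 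Having established $Y_{j+1}\le c\,C^j Y_j^{1+\beta}$, I invoke the standard fast-geometric-convergence lemma (De Giorgi): there exists $\sigma = \sigma(n,g_0,g_1,\nu,L)\in(0,1)$ such that if $Y_0 \le \sigma$ then $Y_j\to 0$, hence $\esssup_{Q_\infty}(u-1)_+ = 0$, i.e. $u\le 1$ on $B_{1/2}\times(-1/2,0)$. Unscaling gives the claim. I expect the main obstacle to be purely the exponent/constant tracking in the Sobolev–Caccioppoli iteration under the Orlicz growth — there is no conceptual difficulty, but one must verify that the smallness threshold $\sigma$ and the geometric ratio can be chosen depending only on $n,g_0,g_1,\nu,L$ and not on $k,\rho,\theta$, which is exactly what the scaling normalization is designed to guarantee.
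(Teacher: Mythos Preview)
Your proposal is correct and follows essentially the same De Giorgi iteration as the paper. Two minor differences are worth flagging. First, you normalize by rescaling at the outset (which the scaling discussion preceding the lemma indeed invites), whereas the paper carries $\rho,\theta,k$ through the whole argument; this is purely cosmetic. Second, and more substantively for the ``delicate point'' you identify: to close the recursion without any a priori bound on $(u-k_j)_+$, the paper estimates the two pieces of $Y_{j+1}$ \emph{separately}, applying the parabolic Sobolev inequality (Lemma~\ref{parabolicsobolev}) with $q=1$ for the $G$-term and with $q=2n/(n+2)$ for the quadratic term (the latter using that $s\mapsto sG(s)^{-1/q}$ is decreasing when $g_0>2n/(n+2)$), and only then combines them to get $Y_{j+1}\le c\,b^j Y_j^{1+1/n}$. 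Your single-$q$ outline is right in spirit, but this two-exponent split is the concrete device that handles the Orlicz non-homogeneity cleanly and avoids the circularity you note around $(u-k_j)_+\le 1$.
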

\begin{proof}
Set for $j\in\N_0$ 
\[
 \rho_j=\left(1+2^{-j}\right)\frac\rho2,\qquad \theta_j=\left(1+2^{-j}\right)\frac\theta2,\qquad k_j=\left(1-2^{-j}\right)k
\]
and define 
\[
 Q_j:=Q(\rho_j,\theta_j). 
\]
Moreover, for technical reasons we introduce 
\[
 \tilde \rho_j = \frac{\rho_j+\rho_{j+1}}{2}, \qquad \tilde\theta_j=\frac{\theta_j+\theta_{j+1}}{2},\qquad \widetilde Q_j=Q(\tilde\rho_j,\tilde\theta_j).
\]
Observe that $Q_{j+1}\subset\widetilde Q_j\subset Q_j$. Let $\varphi_j\in C^\infty(Q_j), \tilde\varphi_j\in C^\infty(\widetilde Q_j)$ be such that $\varphi_j, \tilde\varphi_j$ vanish on $\partial_p Q_j,\partial_p\widetilde Q_j$, respectively, $0\leq\varphi_j,\tilde\varphi_j\leq 1$, $\varphi_j=1$ in $\widetilde Q_j$, $\tilde\varphi_j=1$ in $Q_{j+1}$, and 
\[
 |D\varphi_j|,|D\tilde\varphi_j|\leq c\frac{2^j}{\rho},\qquad |\partial_t\varphi_j|,|\partial_t\tilde\varphi_j|\leq c\frac{2^j}{\theta},
\]
where $c$ is a universal constant. 

Define 
\[
 Y_j:=G\left(\frac{k}{\rho}\right)^{-1}\mint_{Q_j}\left(G\left(\frac{(u-k_j)_+}{\rho}\right)+\frac{(u-k_j)_+^2}{\theta}\right)\dx\dt.
\]
The aim is to use De Giorgi's iteration method and for that we need to estimate $Y_{j+1}$. We shall estimate the two integral terms appearing in $Y_{j+1}$ separately. Since in the support of $(u-k_{j+1})_+$ we have 
\begin{equation}\label{support}
 (u-k_j)_+\geq k_{j+1}-k_j = 2^{-j-1}k,
\end{equation}
Lemma~\ref{parabolicsobolev} with $q=1$ yields 
\begin{align*}
 \mint_{Q_{j+1}}&G\left(\frac{(u-k_{j+1})_+}{\rho}\right)\dx\dt \notag \\ 
 &\leq c\,2^{2j/n}k^{-2/n}\mint_{\widetilde Q_j}G\left(\frac{(u-k_j)_+\tilde\varphi_j}{\tilde\rho_j}\right)\big((u-k_j)_+\tilde\varphi_j\big)^{2/n}\dx\dt\notag\\
 &\leq c\,2^{2j/n}k^{-2/n}\esssup_{(t_0-\tilde\theta_j,t_0)}\left(\mint_{B_{\tilde\rho_j}}(u-k_j)_+^2\tilde\varphi_j^2\dx\right)^{1/n}\notag\\
 &\qquad\qquad\qquad\qquad\qquad\qquad\qquad\quad\times\mint_{\widetilde Q_j}G\Big(\left|D\big((u-k_j)_+\tilde\varphi_j\big)\right|\Big)\dx\dt\notag\\
 &\leq c\,2^{2j/n}k^{-2/n}\theta^{1/n}\left(\frac{1}{\theta_j}\esssup_{(t_0-\theta_j,t_0)}\mint_{B_{\rho_j}}(u-k_j)_+^2\varphi_j^{g_1}\dx\right)^{1/n}\notag\\
 &\qquad\quad\times\left(\mint_{Q_j}G\big(|D(u-k_j)_+|\big)\varphi_j^{g_1}\dx\dt + \mint_{Q_j}G\big((u-k_j)_+|D\tilde\varphi_j|\big)\dx\dt\right).
\end{align*}
The Caccioppoli inequality, Lemma~\ref{caccioppoli}, gives 
\begin{align*}
 \frac{1}{\theta_j}\esssup_{(t_0-\theta_j,t_0)}\mint_{B_{\rho_j}}&(u-k_j)_+^2\varphi_j^{g_1}\dx + \mint_{Q_j}G(|D(u-k_j)_+|)\varphi_j^{g_1}\dx\dt\\
  &\leq c\mint_{Q_j}G\big(|D\varphi_j|(u-k_j)_+\big)\dx\dt + c\mint_{Q_j}(u-k_j)_+^2|\partial_t\varphi_j|\dx\dt,
\end{align*}
since $\varphi_j$ vanishes at $t=t_0-\theta_j$, and thus, using also \eqref{geometry}, we obtain 
\begin{align*}
 &\mint_{Q_{j+1}}G\left(\frac{(u-k_{j+1})_+}{\rho}\right)\dx\dt \notag \\
 &\leq c\,2^{2j/n}k^{-2/n}\theta^{1/n}\left(\mint_{Q_j}G\left(2^j\frac{(u-k_j)_+}{\rho}\right)\dx\dt+\mint_{Q_j}2^j\frac{(u-k_j)_+^2}{\theta}\dx\dt\right)^{1+1/n}\notag\\
 &\leq c\, 2^{(2/n+g_1(1+1/n))j}\left[\theta k^{-2} G\left(\frac{k}{\rho}\right)\right]^{1/n}G\left(\frac{k}{\rho}\right)Y_j^{1+1/n}
 = c\,b_1^jG\left(\frac{k}{\rho}\right)Y_j^{1+1/n},
\end{align*}
where $b_1=2^{2/n+g_1(1+1/n)}$. 

For the second term we apply Lemma~\ref{parabolicsobolev} with $q=2n/(n+2)$. The mapping $s\mapsto sG(s)^{-1/q}$ is decreasing due to the assumption $g_0>2n/(n+2)$, and therefore by \eqref{support} in the support of $(u-k_{j+1})_+$ we have 
\[
 \frac{(u-k_j)_+}{\rho} G\left(\frac{(u-k_j)_+}{\rho}\right)^{-1/q}\leq \frac{k}{2^{j+1}\rho} G\left(\frac{k}{2^{j+1}\rho}\right)^{-1/q},
\]
which implies 
\[
 (u-k_j)_+\leq 2^{(g_1/q-1)(j+1)}k\,G\left(\frac{k}{\rho}\right)^{-1/q}G\left(\frac{(u-k_j)_+}{\tilde\rho_j}\right)^{1/q}.
\]
Combining this with the Caccioppoli inequality as above yields 
\begin{align*}
 \mint_{Q_{j+1}}&\frac{(u-k_{j+1})_+^2}{\theta}\dx\dt\notag\\
 &\leq c\,2^{(g_1/q-1)j}\theta^{-1}k G\left(\frac{k}{\rho}\right)^{-1/q}\mint_{\widetilde Q_j}G\left(\frac{(u-k_j)_+\tilde\varphi_j}{\tilde\rho_j}\right)^{1/q}(u-k_j)_+\tilde\varphi_j\dx\dt\notag\\
 &\leq c\,2^{(g_1/q-1)j}\theta^{-1}k G\left(\frac{k}{\rho}\right)^{-1/q}\esssup_{(t_0-\tilde\theta_j,t_0)}\left(\mint_{B_{\tilde\rho_j}}(u-k_j)_+^2\tilde\varphi_j^2\dx\right)^{1/2}\notag\\
 &\qquad\qquad\qquad\qquad\qquad\qquad\qquad\quad\times\left(\mint_{\widetilde Q_j}G\Big(\left|D\big((u-k_j)_+\tilde\varphi_j\big)\right|\Big)\dx\dt\right)^{1/q}\notag\\
 &\leq c\,2^{(g_1/q-1)j}\theta^{-1/2}k G\left(\frac{k}{\rho}\right)^{-1/q}\left(\frac{1}{\theta_j}\esssup_{(t_0-\theta_j,t_0)}\mint_{B_{\rho_j}}(u-k_j)_+^2\varphi_j^{g_1}\dx\right)^{1/2}\notag\\
 &\qquad\times\left(\mint_{Q_j}G\big(|D(u-k_j)_+|\big)\varphi_j^{g_1}\dx\dt + \mint_{Q_j}G\big((u-k_j)_+|D\tilde\varphi_j|\big)\dx\dt\right)^{1/q}\notag\\
 &\leq c\,2^{((3/2+2/n)g_1-1)j}\left[\theta k^{-2}G\left(\frac{k}{\rho}\right)\right]^{-1/2}G\left(\frac{k}{\rho}\right)Y_j^{1+1/n}
 = c\,b_2^jG\left(\frac{k}{\rho}\right)Y_j^{1+1/n},
\end{align*}
where $b_2=2^{(3/2+2/n)g_1-1}$. 

By putting the two estimates together we obtain 
\begin{align*}
 Y_{j+1} &= G\left(\frac{k}{\rho}\right)^{-1}\left(\mint_{Q_{j+1}}G\left(\frac{(u-k_{j+1})_+}{\rho}\right)\dx\dt+\mint_{Q_{j+1}}\frac{(u-k_{j+1})_+^2}{\theta}\dx\dt\right)\notag\\
 &\leq c\,b^jY_j^{1+1/n},
\end{align*}
where $b=\max\{b_1,b_2\}$. Now a standard hyper-geometric iteration lemma, see Lemma~4.1 in \cite{DiBenedetto}, yields $Y_j\to 0$ as $j\to\infty$ provided that $Y_0\leq c^{-n}b^{-n^2}$. But this condition is precisely \eqref{k_condition} if we choose $\sigma=c^{-n}b^{-n^2}$. Therefore $u\leq k$ almost everywhere in $Q(\rho/2,\theta/2)$, as required. 
\end{proof}

In order to prove the boundedness of nonnegative weak subsolutions we still need to show that there always exists a number $k$ that satisfies \eqref{k_condition}. Due to the general nature of the equation this can only be done implicitly so that, at least with our method, it is not possible to obtain a nice {\em a priori} estimate, like in the case of the $p$-Laplacian (see \cite{DiBenedetto}). 

To this end, define 
\[
 a:= \liminf_{s\to\infty}\frac{s^2}{G(s)},\qquad A:= \limsup_{s\to\infty}\frac{s^2}{G(s)}.
\]
Depending on the growth of $G$ we consider three separate cases. The case $a=A=0$ is the \emph{degenerate} case. When $a=A=\infty$ we have the \emph{singular} case. The remaining case where either $a$ or $A$ is strictly positive and finite, or $a=0$ and $A=\infty$, we shall call the \emph{intermediate} case. Notice that when $g_0>2$ ($g_1<2$) we must be in the degenerate (singular) case, and on the other hand in the degenerate (singular) case we always have $g_1>2$ ($g_0<2$). 

\begin{Theorem}\label{boundednesstheorem}
 Let $u$ be a nonnegative weak subsolution in $\Omega_T$. Then $u\in L_{\textrm{loc}}^\infty(\Omega_T)$.
\end{Theorem}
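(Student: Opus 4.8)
The plan is to apply Lemma~\ref{k_lemma} in a qualitative way. Fix a cylinder $Q(\rho,\theta)=B_\rho(x_0)\times(t_0-\theta,t_0)\Subset\Omega_T$; since $u\in V^{2,G}_\loc(\Omega_T)$, the quantity
\[
I(\rho):=\mint_{Q(\rho,\theta)}\left(G\left(\frac{u}{\rho}\right)+\frac{u^2}{\theta}\right)\dx\dt
\]
is finite once $\theta$ is tied to $k$ and $\rho$ by the scaling relation \eqref{geometry}, namely $\theta=k^2G(k/\rho)^{-1}$. The goal is to show that for a suitable choice of $k$ (depending on $u$, $\rho$, $x_0$, $t_0$, and the data, but crucially finite) the smallness condition \eqref{k_condition} holds, so that $\esssup_{Q(\rho/2,\theta/2)}u\le k<\infty$, which gives local boundedness near $(x_0,t_0)$; a covering argument then yields $u\in L^\infty_\loc(\Omega_T)$.

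First I would fix $\rho$ small enough that $B_\rho(x_0)\Subset\Omega$, and regard everything as a function of the free parameter $k>0$, with $\theta=\theta(k)=k^2G(k/\rho)^{-1}$. Rewriting \eqref{k_condition} after dividing by $G(k/\rho)$, one needs
\[
\frac{1}{G(k/\rho)}\mint_{Q(\rho,\theta(k))}G\left(\frac{u}{\rho}\right)\dx\dt
+\frac{1}{k^2}\mint_{Q(\rho,\theta(k))}u^2\dx\dt\le\sigma.
\]
The plan is to analyze the behavior of each term as $k\to\infty$ in the degenerate and singular cases separately, and to handle the intermediate case by a slightly more careful argument. In the degenerate case $a=A=0$ we have $s^2/G(s)\to 0$, hence $\theta(k)=k^2G(k/\rho)^{-1}\to 0$, so $Q(\rho,\theta(k))$ shrinks in the time direction; by absolute continuity of the integral of the fixed $L^1$ functions $G(u/\rho)$ and $u^2$ over $B_\rho\times(t_0-\theta,t_0)$, both numerators $\int_{Q}G(u/\rho)$ and $\int_{Q}u^2$ tend to $0$, while $|Q|\sim\theta$ and the denominators $G(k/\rho)$, $k^2$ grow; a short estimate shows the whole left-hand side tends to $0$, so \eqref{k_condition} holds for $k$ large. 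In the singular case $a=A=\infty$, $\theta(k)\to\infty$, which would eventually exceed the available time interval — here one instead fixes $\theta$ first (small, with $B_\rho(x_0)\times(t_0-\theta,t_0)\Subset\Omega_T$) and then chooses $k$ via $G(k/\rho)=k^2/\theta$, i.e.\ solves the scaling relation for $k$ given $\theta$; since $s^2/G(s)\to\infty$ there is a solution $k=k(\theta)\to\infty$ as $\theta\to\infty$ but also, reading it the other way, $k$ stays bounded for $\theta$ bounded, and one checks that shrinking $\rho$ (or using that $G(u/\rho)$ is finite) makes both averaged terms small. The cleanest uniform way is: for the fixed cylinder, the left side of the rewritten inequality, call it $\Phi(k)$, is a continuous function of $k$ on $(0,\infty)$ for which one shows $\liminf_{k\to\infty}\Phi(k)=0$ using whichever of $\theta(k)\to0$ or a comparison of $G(k/\rho)$ with $k^2/\theta$ is available in the relevant case; that liminf being $0$ produces infinitely many admissible $k$.

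The main obstacle is precisely that the relation \eqref{geometry} couples $\theta$ and $k$, so one cannot simply send $k\to\infty$ with a fixed cylinder: in the degenerate case the cylinder collapses (good), but in the singular and intermediate cases $\theta(k)$ may blow up (bad), forcing the two-parameter juggling described above and a case split governed by $a=\liminf s^2/G(s)$ and $A=\limsup s^2/G(s)$. I expect the intermediate case ($0<a$ or $A<\infty$, or $a=0,A=\infty$) to require the most care: along a subsequence $s_j\to\infty$ with $s_j^2/G(s_j)$ converging to a finite positive value (or to $0$), one picks $k=\rho s_j$ so that $\theta(k)=k^2G(k/\rho)^{-1}=\rho^2\,s_j^2/G(s_j)$ stays bounded, keeping $Q(\rho,\theta)\Subset\Omega_T$, and then verifies \eqref{k_condition} by taking $\rho$ small (to make $\mint G(u/\rho)/G(k/\rho)$ and $\mint u^2/k^2$ small, using $u\in L^G_\loc$ and $u\in L^2_\loc$ together with the growth bounds \eqref{gDelta2} on $G$). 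Having produced one finite $k$ with $\esssup_{Q(\rho/2,\theta/2)}u\le k$, I would finish with a standard chaining/covering argument to conclude $u\in L^\infty_\loc(\Omega_T)$, noting that the bound is only qualitative — no explicit dependence on $\|u\|$ is obtained — which the text has already flagged as the expected outcome.
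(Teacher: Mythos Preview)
Your overall plan matches the paper's: apply Lemma~\ref{k_lemma} and, because of the coupling $\theta=k^2G(k/\rho)^{-1}$, split into the degenerate, intermediate and singular regimes governed by $a$ and $A$. Your treatment of the degenerate and intermediate cases is correct and in fact cleaner than the paper's: once $\rho$ is fixed and $k=\rho s_j$ runs along a sequence with $s_j\to\infty$ and $s_j^2/G(s_j)$ bounded (automatic when $A<\infty$, and always available in the intermediate case), one has $\theta(k)\le \rho^2\sup_j s_j^2/G(s_j)$, so $Q(\rho,\theta(k))$ stays inside a fixed cylinder, and since $G(k/\rho)\,\theta(k)=k^2$ both terms in the rewritten condition are bounded by $C/(k^2|B_\rho|)\to 0$. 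The paper instead uses an explicit choice of $s_{m^*}$ together with a splitting $\{u<\eps k\}\cup\{u\ge\eps k\}$, which is heavier but yields the same qualitative conclusion.

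The gap is in the singular case. There no sequence $s_j\to\infty$ with $s_j^2/G(s_j)$ bounded exists, so you are forced to shrink $\rho$ (with $\theta$ fixed), and your assertion that ``shrinking $\rho$ makes both averaged terms small'' is not justified for the first term. Concretely, with $\theta=\theta_0$ fixed and $s=k/\rho\to\infty$ determined by $s^2/G(s)=\theta_0/\rho^2$, you must control
\[
\frac{1}{G(k/\rho)}\mint_{Q(\rho,\theta_0)}G\!\left(\frac{u}{\rho}\right)\dx\dt
=\frac{1}{k^2|B_\rho|}\int_{Q(\rho,\theta_0)}G\!\left(\frac{u}{\rho}\right)\dx\dt.
\]
The only a priori bound available is $G(u/\rho)\le \rho^{-g_1}G(u)$ from \eqref{gDelta2}, but in the singular case one merely knows $g_0<2$; the upper exponent $g_1$ may well be $\ge 2$, and then the factor $\rho^{-g_1}$ competes unfavourably with $k^2|B_\rho|=\theta_0 G(s)\,c_n\rho^n$, so the term need not vanish. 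Nothing else in your outline (absolute continuity, finiteness of $G(u/\rho)$ for \emph{fixed} $\rho$) survives the limit $\rho\to 0$. The paper resolves this with a device you do not mention: split $\{u<\eps k\}$, where $G(u/\rho)\le \eps^{g_0}G(k/\rho)$ trivially, from $\{u\ge \eps k\}$, and on the latter set use that in the singular regime the map $s\mapsto s/G^{-1}(s^2)$ is small for large $s$ to obtain $G(u/\rho)\le c(\eps)\,u^2/\theta$, thereby reducing the dangerous term to an $L^2$ average which is controlled by $u\in L^\infty_\loc(0,T;L^2_\loc)$. Without this (or an equivalent) comparison of $G(u/\rho)$ with $u^2/\theta$ on the large-$u$ set, the singular case in your plan does not close.
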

\begin{proof}
The idea is to show that in each case there exists some finite $k$ and a neighborhood $Q(\rho,\theta)$ of $(x_0,t_0)\in\Omega_T$ such that \eqref{k_condition} holds. Then 
\[
\esssup_{Q(\rho/2,\theta/2)}u\leq k
\]
by Lemma~\ref{k_lemma} and the claim follows. Recall that the notion of weak subsolution includes that $u\in V_{\textrm{loc}}^{2,G}(\Omega_T)$, and hence in particular 
\[
\int_Q \big(G(u)+u^2\big)\dx\dt < \infty
\]
for any $Q\Subset\Omega_T$. 

{\bf Intermediate case.} If either $1 \leq a < \infty$ or $ 0 < A \leq 1$ there clearly exist a constant $1\leq M<\infty$ and a sequence $(s_m)_{m=0}^\infty$ such that $\lim_{m\to\infty}s_m=\infty$ and for every $m\in\N_0$ we have 
\begin{equation}\label{M}
 \frac{1}{M} \leq \frac{s_m^2}{G(s_m)} \leq M.
\end{equation}
On the other hand, if $a<1<A$ we can always find a sequence $(s_m)_{m=0}^\infty$ such that $\lim_{m\to\infty}s_m=\infty$ and $\frac{s_m^2}{G(s_m)}=1$ for every $m\in\N_0$, and thus in this case \eqref{M} holds with $M=1$. 

Fix a radius $0<r<1/M$ such that $Q(r,r)\Subset\Omega_T$ and let $m^*$ be the smallest $m$ that satisfies 
\[
s_{m} \geq \left(\frac{1}{\sigma r}\mint_{Q(r,r)}\left(G\left(\frac{u}{r}\right)+M\left(\frac{u}{r}\right)^2\right)\dx\dt\right)^{1/2}.
\]
Set $k=rs_{m^*}$ and choose $\rho=r$ and $\theta=k^2 G\left(\frac{k}{r}\right)^{-1}$. Observe that \eqref{geometry} holds and, moreover, 
\[
 \theta=\frac{k^2}{G\left(\frac{k}{r}\right)}=\frac{s_{m^*}^2}{G\left(s_{m^*}\right)}r^2\leq Mr^2 \leq r
\]
by \eqref{M}. Now 
\begin{align*}
 \mint_{Q(\rho,\theta)}&\left(G\left(\frac{u}{\rho}\right)+\frac{u^2}{\theta}\right)\dx\dt
 = \frac{G\left(\frac{k}{r}\right)}{|B_r|k^2}\int_{Q(\rho,\theta)}\left(G\left(\frac{u}{r}\right)+\frac{G(s_{m^*})}{s_{m^*}^2}\left(\frac{u}{r}\right)^2\right)\dx\dt\notag\\
 &\leq \frac{G\left(\frac{k}{r}\right)}{r s_{m^*}^2}\mint_{Q(r,r)}\left(G\left(\frac{u}{r}\right)+M\left(\frac{u}{r}\right)^2\right)\dx\dt 
 \leq \sigma G\left(\frac{k}{\rho}\right).
\end{align*}

{\bf Degenerate case.} Since now $\limsup_{s\to\infty}\frac{s^2}{G(s)}=0$, there exists $s_0\geq 1$ such that 
\begin{equation}\label{s_0}
 \sup_{s\geq s_0}\frac{s^2}{G(s)}\leq 1.
\end{equation}
Set for $m\in\N_0$ 
\[
 D_m:=\sup_{s\geq s_0+m}\frac{s^2}{G(s)}
\]
and observe that $D_m\leq D_0\leq 1$ by \eqref{s_0}. Using the very definition of supremum, for every $m\in\N_0$ there exists $s_m\geq s_0+m$ such that 
\begin{equation}\label{D_m}
 \frac{s_m^2}{G(s_m)}\geq \frac12 D_m.
\end{equation}

Let $r>0$ be such that $Q(r,r^2)\Subset\Omega_T$. We clearly have $\lim_{m\to\infty}s_m=\infty$ so that we may find the smallest $m$, which we again call $m^*$, such that 
\[
 s_m \geq \left(\frac{c^*}{\sigma}\mint_{Q(r,r^2)}G\left(\frac{u}{r}\right)\dx\dt\right)^{1/2}.
\]
The constant $c^*$ shall be determined shortly. Again, set $k=rs_{m^*}$ and choose $\rho=r$ and $\theta=k^2 G\left(\frac{k}{r}\right)^{-1}$. Since $\frac{s_{m^*}^2}{G(s_{m^*})}\leq D_m\leq 1$, we have 
\[
 \theta=\frac{k^2}{G\left(\frac{k}{r}\right)}=\frac{s_{m^*}^2}{G(s_{m^*})}r^2 \leq r^2.
\]
Take $0<\eps<1$ to be chosen later. In the set $\{u\geq \eps k\}$ we obtain by a change of variables and \eqref{D_m} that 
\begin{align*}
 \frac{u^2}{G\left(\frac{u}{r}\right)}&\leq\sup_{s\geq\eps k}\frac{s^2}{G\left(\frac{s}{r}\right)}=\eps^2r^2\sup_{s\geq s_{m^*}}\frac{s^2}{G(\eps s)}\notag\\
 &\leq \eps^{2-g_1}r^2D_{m^*}\leq 2\eps^{2-g_1}r^2\frac{s_{m^*}^2}{G(s_{m^*})}=2\eps^{2-g_1}\theta.
\end{align*}
Therefore, 
\begin{align*}
 \mint_{Q(\rho,\theta)}\left(G\left(\frac{u}{\rho}\right)+\frac{u^2}{\theta}\right)&\dx\dt
 =\mint_{Q(\rho,\theta)}\left(G\left(\frac{u}{r}\right)+\frac{u^2}{k^2}G\left(\frac{k}{r}\right)\right)\chi_{\{u<\eps k\}}\dx\dt\\
 &\qquad\qquad+\frac{G\left(\frac{k}{r}\right)}{|B_r|k^2}\int_{Q(\rho,\theta)}\left(G\left(\frac{u}{r}\right)+\frac{u^2}{\theta}\right)\chi_{\{u\geq\eps k\}}\dx\dt\\
 &\leq \left(\eps^{g_0}+\eps^2+\frac{1+2\eps^{2-g_1}}{s_{m^*}^2}\mint_{Q(r,r^2)}G\left(\frac{u}{r}\right)\dx\dt\right)G\left(\frac{k}{r}\right)\\
 &\leq \left(2\eps^{\min\{g_0,2\}}+\frac{1+2\eps^{2-g_1}}{c^*}\sigma\right)G\left(\frac{k}{r}\right)
 =\sigma G\left(\frac{k}{\rho}\right),
\end{align*}
if we choose $\eps=(\sigma/4)^{1/\min\{g_0,2\}}$ and $c^*=2\left(1+2\eps^{2-g_1}\right)$. 

{\bf Singular case.} The proof is very similar to the degenerate case and therefore some details shall be omitted. A change of variables gives 
\[
 \liminf_{s\to\infty}\frac{s^2}{G(s)}=\left(\limsup_{s\to\infty}\frac{s}{G^{-1}(s^2)}\right)^{-2},
\]
and thus the condition $a=\infty$ is equivalent to $\limsup_{s\to\infty}\frac{s}{G^{-1}(s^2)}=0$. Proceeding as in the degenerate case, choose $s_0\geq 1$ such that 
\[
 \sup_{s\geq s_0}\frac{s}{G^{-1}(s^2)}\leq 1,
\]
set for $m\in\N_0$ 
\[
 S_m:=\sup_{s\geq s_0+m}\frac{s}{G^{-1}(s^2)},
\]
and construct the sequence $(s_m)_{m=0}^\infty$ such that $s_m\geq s_0+m$ and 
\[
 \frac{s_m}{G^{-1}(s_m^2)}\geq\frac12 S_m.
\]

We again fix $r>0$ such that $Q(r,r^2)\Subset\Omega_T$ and this time take $m^*$ to be the smallest $m$ for which 
\[
 s_m\geq \left(\frac{c^*}{\sigma}\mint_{Q(r,r^2)}\frac{u^2}{r^2}\dx\dt\right)^{1/(n+2-2n/g_0)}
\]
holds for some $c^*$ to be fixed. Set $k=rs_{m^*}$ and choose $\displaystyle{\rho=k\left[G^{-1}\left(\frac{k^2}{r^2}\right)\right]^{-1}}$ and $\theta=r^2$. Notice that \eqref{geometry} holds and we again have $Q(\rho,\theta)\subset Q(r,r^2)$, since 
\[
\rho = \frac{k}{G^{-1}\left(\frac{k^2}{r^2}\right)}=\frac{s_{m^*}}{G^{-1}(s_{m^*}^2)}r\leq S_{m^*}r\leq r.
\]

Let $\eps$ be the same as above. A similar calculation as before shows that 
\[
 \frac{u}{G^{-1}\left(\frac{u^2}{r^2}\right)}\leq 2\eps^{1-2/g_0}\rho
\]
in the set $\{u\geq\eps k\}$. Thus 
\begin{align*}
\mint_{Q(\rho,\theta)}&\left(G\left(\frac{u}{\rho}\right)+\frac{u^2}{\theta}\right)\dx\dt
 =\mint_{Q(\rho,\theta)}\left(G\left(\frac{u}{k}G^{-1}\left(\frac{k^2}{r^2}\right)\right)+\frac{u^2}{r^2}\right)\chi_{\{u<\eps k\}}\dx\dt\\
 &\qquad\qquad\qquad\qquad\qquad+\frac{\left[G^{-1}(s_{m^*}^2)\right]^n}{|B_r|r^2s_{m^*}^n}\int_{Q(\rho,\theta)}\left(G\left(\frac{u}{\rho}\right)+\frac{u^2}{r^2}\right)\chi_{\{u\geq\eps k\}}\dx\dt\\
 &\qquad\leq \left(\eps^{g_0}+\eps^2+s_{m^*}^{2n/g_0-(n+2)}\left(\left(2\eps^{1-2/g_0}\right)^{g_1}+1\right)\mint_{Q(r,r^2)}\frac{u^2}{r^2}\dx\dt\right)\frac{k^2}{r^2}\\
 &\qquad\leq \left(2\eps^{\min\{g_0,2\}}+\frac{\left(2\eps^{1-2/g_0}\right)^{g_1}+1}{c^*}\sigma\right)\frac{k^2}{r^2}
 =\sigma G\left(\frac{k}{\rho}\right)
\end{align*}
upon choosing $c^*=2\left(\left(2\eps^{1-2/g_0}\right)^{g_1}+1\right)$. 

We have shown that in all three cases a finite $k$ exists, which proves that nonnegative weak subsolutions are locally bounded. 
\end{proof}

\begin{Remark}
Observe that we get no quantitative information about the size of $k$ in any of the cases. This is due to the fact that we only have qualitative information about the sequence $s_m$, so that $s_{m^*}$ could be arbitrarily large, although finite. 
\end{Remark}

If $u$ is only a weak subsolution but not necessarily nonnegative, we may apply the result to $\max\{u,0\}$ which is a nonnegative weak subsolution by Lemma~\ref{minsuper}. Similarly, if $u$ is a weak supersolution, then $\max\{-u,0\}$ is a nonnegative weak subsolution. Hence we obtain the following corollary.

\begin{Corollary}\label{supercorollary}
 Let $u$ be a weak supersolution (subsolution) in $\Omega_T$. Then $u$ is locally essentially bounded from below (above). In particular, if $u$ is a weak solution in $\Omega_T$, then $u\in L_{\textrm{loc}}^\infty(\Omega_T)$.
\end{Corollary}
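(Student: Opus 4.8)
The statement to prove is Corollary~\ref{supercorollary}: a weak supersolution is locally bounded from below, a weak subsolution is locally bounded from above, and a weak solution is locally bounded.

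The plan: the heavy lifting has already been done in Theorem~\ref{boundednesstheorem}, which handles nonnegative weak subsolutions. So the proof is purely a matter of reduction. First I would recall Lemma~\ref{minsuper}, which says that the maximum of two weak subsolutions is again a weak subsolution. Since any constant function (in particular the zero function) is trivially a weak solution, hence both a weak sub- and supersolution, $\max\{u,0\}$ is a weak subsolution whenever $u$ is. It is also nonnegative by construction, so Theorem~\ref{boundednesstheorem} applies to it: $\max\{u,0\}\in L^\infty_{\loc}(\Omega_T)$. This gives a local upper bound for $u$ itself, since $u\leq\max\{u,0\}$ pointwise a.e. That settles the subsolution case.

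For the supersolution case, I would invoke Remark~\ref{minussuperissub}: if $u$ is a weak supersolution then $-u$ is a weak subsolution (using the normalization $\A(\xi)=-\A(-\xi)$ made there without loss of generality). By the subsolution case just established, $\max\{-u,0\}$ is locally bounded above, so $-u$ is locally bounded above, which is exactly the statement that $u$ is locally bounded below. Finally, a weak solution is simultaneously a weak subsolution and a weak supersolution, so it is locally bounded both above and below, i.e. $u\in L^\infty_{\loc}(\Omega_T)$.

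I do not expect any genuine obstacle here — this is a short bookkeeping argument. The only point requiring a moment's care is making sure the reductions are legitimate: that constants are admissible comparison functions (immediate from the definition of weak solution), that Lemma~\ref{minsuper} is stated for the relevant combination (it is: $\max$ of subsolutions is a subsolution), and that the sign-change reduction of Remark~\ref{minussuperissub} is available in the normalized form. All of these are in place in the excerpt, so the proof reduces to a few sentences chaining these facts together, exactly as sketched in the paragraph preceding the corollary.
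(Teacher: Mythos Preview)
Your proposal is correct and follows exactly the same route as the paper: reduce to Theorem~\ref{boundednesstheorem} by applying it to the nonnegative weak subsolution $\max\{u,0\}$ (using Lemma~\ref{minsuper} and the fact that constants are solutions), and handle the supersolution case via Remark~\ref{minussuperissub}. This is precisely the argument sketched in the paragraph preceding the corollary, so there is nothing to add.
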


After the boundedness of nonnegative weak subsolutions has been established, we obtain the lower semicontinuity of supersolutions fairly easily by using the {\em a priori} estimate in \eqref{k_condition} and Lebesgue's differentiation theorem. For this we define the $\essliminf$-regularization of a function $u$ that is bounded from below as 
\begin{equation}\label{essliminfreg}
\hat u(x,t):=\lim_{r\to 0}\essinf_{Q_r(x,t)\cap\Omega_T}u,
\end{equation}
where $Q_r(x,t):=B_r(x)\times\left(t-\frac12 r^2,t+\frac12 r^2\right)$. First we need a simple lemma. 

\begin{Lemma}\label{essliminflsc}
 Let $u$ be essentially bounded from below. Then $\hat u$ is lower semicontinuous.
\end{Lemma}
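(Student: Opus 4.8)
The plan is to show that $\hat u$ satisfies the defining inequality of lower semicontinuity directly from its definition as a limit of essential infima over shrinking cylinders. First I would observe that the map $r\mapsto \essinf_{Q_r(x,t)\cap\Omega_T}u$ is nondecreasing as $r\to 0$, so the limit in \eqref{essliminfreg} exists (possibly equal to $+\infty$ a priori, but finite on $\Omega_T$ since $u$ is essentially bounded from below on compact subsets); this makes $\hat u$ well defined and also gives $\hat u(x,t)\geq \essinf_{Q_r(x,t)\cap\Omega_T}u$ for every $r>0$.

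The key step is the following: fix $(x_0,t_0)\in\Omega_T$ and let $\lambda < \hat u(x_0,t_0)$. Choose $r>0$ small enough that $Q_r(x_0,t_0)\Subset\Omega_T$ and $\essinf_{Q_r(x_0,t_0)}u > \lambda$; this is possible precisely because $\hat u(x_0,t_0)=\lim_{r\to 0}\essinf_{Q_r(x_0,t_0)\cap\Omega_T}u > \lambda$ and the essential infima increase as $r$ decreases. Now for any $(x,t)$ in the open cylinder $Q_{r/2}(x_0,t_0)$, one has the inclusion $Q_{r/2}(x,t)\subset Q_r(x_0,t_0)$ — here I would just note that $B_{r/2}(x)\subset B_r(x_0)$ since $|x-x_0|<r/2$, and similarly for the time intervals, using that $Q_s(y,\tau)=B_s(y)\times(\tau-\tfrac12 s^2,\tau+\tfrac12 s^2)$ so the time half-length $\tfrac12(r/2)^2$ is well within $\tfrac12 r^2$ once $|t-t_0|<\tfrac12 r^2$; one should pick the neighborhood of $(x_0,t_0)$ small enough in the time direction too, say a parabolic cylinder of the shape $Q_\delta(x_0,t_0)$ with $\delta$ chosen so that $Q_{\delta}(x,t)\subset Q_r(x_0,t_0)$ for all $(x,t)\in Q_\delta(x_0,t_0)$. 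Then $\essinf_{Q_{\delta}(x,t)}u \geq \essinf_{Q_r(x_0,t_0)}u > \lambda$, and letting the radius go to zero gives $\hat u(x,t)\geq \essinf_{Q_r(x_0,t_0)}u > \lambda$ for all such $(x,t)$. Hence $\{\hat u > \lambda\}$ contains a neighborhood of $(x_0,t_0)$, which is exactly lower semicontinuity.

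The only mildly delicate point is the bookkeeping with the cylinder inclusions near $\partial\Omega_T$, i.e. making sure the shrinking cylinders $Q_s(x,t)$ stay inside $Q_r(x_0,t_0)\cap\Omega_T$ and that monotonicity of the essential infimum in the domain is applied correctly; but this is elementary once the radii and time-lengths are chosen consistently. No regularity of $u$ beyond essential boundedness from below is needed, and in particular nothing about the PDE enters — this lemma is purely about the $\essliminf$-regularization operation. I expect this to be the shortest proof in the section, with the monotonicity-in-$r$ observation and the cylinder inclusion being the two ingredients to state carefully.
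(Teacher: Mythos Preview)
Your proposal is correct and follows essentially the same approach as the paper: both arguments fix a point, pick a small cylinder on which the essential infimum exceeds the target level, and then use the inclusion $Q_r(y,s)\subset Q_\rho(x_0,t_0)$ for nearby $(y,s)$ and small $r$ to push the bound through to $\hat u(y,s)$. The only cosmetic difference is that the paper phrases it via $\liminf_{(y,s)\to(x_0,t_0)}\hat u(y,s)\geq \hat u(x_0,t_0)-\eps$ for arbitrary $\eps>0$, whereas you phrase it as openness of $\{\hat u>\lambda\}$; and the paper simply says ``for all small enough $r>0$ we have $Q_r(y,s)\subset Q_\rho(x,t)$'' rather than tracking explicit ratios, which spares you the bookkeeping you flagged. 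One minor remark: $\hat u$ need not be finite everywhere under the hypothesis (only bounded below), but this does not affect the argument since lower semicontinuity is perfectly meaningful for $(-\infty,+\infty]$-valued functions.
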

\begin{proof}
Fix $(x,t)\in\Omega_T$ and $\eps>0$. There exists $\rho_0>0$ such that $Q_{\rho_0}(x,t)\subset\Omega_T$ and 
\[
 \Big|\hat u(x,t)-\essinf_{Q_\rho(x,t)}u\Big|<\eps
\]
for every $0<\rho\leq \rho_0$. Fix such a $\rho$ and let $(y,s)\in Q_\rho(x,t)$. Observe that for all small enough $r>0$ we have $Q_r(y,s)\subset Q_\rho(x,t)$ and thus 
\[
 \hat u(y,s) = \lim_{r\to 0}\essinf_{Q_r(y,s)}u\geq \essinf_{Q_{\rho}(x,t)}u>\hat u(x,t)-\eps.
\]
Now 
\[
 \liminf_{(y,s)\to(x,t)}\hat u(y,s) = \lim_{\rho\to 0}\inf_{Q_\rho(x,t)}\hat u\geq \hat u(x,t)-\eps
\]
and the result follows by taking $\eps\to 0$.
\end{proof}

\begin{Theorem}\label{superlsc}
 Let $u$ be a weak supersolution in $\Omega_T$. Then $u=\hat u$ almost everywhere in $\Omega_T$ and, in particular, $u$ is lower semicontinuous after a redefinition in a set of measure zero.
\end{Theorem}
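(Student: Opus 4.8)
The plan is to show that the $\essliminf$-regularization $\hat u$ agrees with $u$ almost everywhere; combined with Lemma~\ref{essliminflsc} this immediately gives the lower semicontinuous representative. First I would note that $u$ is essentially bounded from below by Corollary~\ref{supercorollary}, so $\hat u$ is well-defined and, by the very definition \eqref{essliminfreg} of the essential infimum over shrinking cylinders, one always has $\hat u(x,t)\leq u(x,t)$ for almost every $(x,t)$ — more precisely, for every Lebesgue point of $u$ we have $\hat u(x,t)\leq u(x,t)$, since the essential infimum over any neighbourhood cannot exceed the Lebesgue value. Hence the only thing to prove is the reverse inequality $\hat u(x,t)\geq u(x,t)$ for almost every $(x,t)$, and for this it suffices to work at Lebesgue points of $u$.

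The core of the argument is a De Giorgi-type bound applied to the nonnegative weak subsolution $w:=\max\{\lambda-u,0\}$, where $\lambda$ is any real number with $\lambda\leq u(x_0,t_0)$, at a Lebesgue point $(x_0,t_0)$ of $u$. The idea is: if $\hat u(x_0,t_0)<\lambda$, then on every small cylinder $Q_r(x_0,t_0)$ the function $w$ is "large on a set of positive measure" in a quantitative sense, yet the Lebesgue-point property forces the averaged integrals $\mint_{Q_r}(G(w/\rho)+w^2/\theta)\,dx\,dt$ to be small relative to $G(k/\rho)$ once we pick a suitable threshold $k$ and the correct intrinsic scaling $\theta=k^2G(k/\rho)^{-1}$ from \eqref{geometry}. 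Concretely, I would choose $k$ small (depending on the gap between $\hat u(x_0,t_0)$ and $\lambda$), take $\rho=r$ with $\theta$ given by \eqref{geometry}, and check that condition \eqref{k_condition} of Lemma~\ref{k_lemma} is met for $r$ small enough: the integrand $G(w/r)+w^2/\theta$ is bounded because $w$ is bounded (Corollary~\ref{supercorollary} again), and since $(x_0,t_0)$ is a Lebesgue point of $u$ with $u(x_0,t_0)\geq\lambda$, the average $\mint_{Q(r,\theta)} w^q\,dx\,dt\to 0$ as $r\to 0$ for any power $q$, while the right-hand side $\sigma G(k/r)$ does not degenerate faster. Lemma~\ref{k_lemma} then yields $\esssup_{Q(r/2,\theta/2)}w\leq k$, i.e. $u\geq \lambda-k$ essentially on a neighbourhood of $(x_0,t_0)$, hence $\hat u(x_0,t_0)\geq\lambda-k$. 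Letting $k\to0$ (which is legitimate since $k$ can be chosen freely) gives $\hat u(x_0,t_0)\geq\lambda$, and then letting $\lambda\uparrow u(x_0,t_0)$ finishes the reverse inequality.

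The delicate point — and the main obstacle — is verifying \eqref{k_condition} for a suitable choice of the threshold $k$ and radius $r$, because the right-hand side $\sigma G(k/\rho)$ also depends on $k$ and $\rho$, and the equation can be simultaneously degenerate and singular. One must split according to the behaviour of $s^2/G(s)$ exactly as in the proof of Theorem~\ref{boundednesstheorem}, using the quantities $a,A$: in each regime I would fix the intrinsic geometry so that $\theta\lesssim r^2$ (or $\rho\lesssim r$ in the singular case), ensuring $Q(\rho,\theta)\subset Q_r(x_0,t_0)$, and then estimate the averaged energy of $w$ on $Q(\rho,\theta)$ by the averaged energy of $w$ on the fixed cylinder $Q_r$ times a harmless constant, which tends to $0$ with $r$ by the Lebesgue point property. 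This is the same bookkeeping already carried out in Lemma~\ref{k_lemma} and Theorem~\ref{boundednesstheorem}, so I expect the argument to be largely a transcription of those computations with $u$ replaced by $w=\max\{\lambda-u,0\}$; the only genuinely new input is combining this quantitative smallness with the qualitative Lebesgue differentiation theorem to extract the pointwise inequality $\hat u\geq u$ a.e.

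Finally, once $u=\hat u$ a.e. is established, lower semicontinuity of the representative $\hat u$ is exactly Lemma~\ref{essliminflsc}, which applies since $u$ (hence $\hat u$) is essentially bounded from below. This completes the proof.
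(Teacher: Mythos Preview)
Your overall strategy matches the paper's: at a Lebesgue point $(x_0,t_0)$ one applies Lemma~\ref{k_lemma} to the nonnegative weak subsolution $v:=(u(x_0,t_0)-u)_+$ (your $w$ with $\lambda=u(x_0,t_0)$) to force $\esssup v\le k$ on a small cylinder, hence $\hat u(x_0,t_0)\ge u(x_0,t_0)-k$, and the easy inequality $\hat u\le u$ at Lebesgue points gives the other direction. So the skeleton is right.

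The gap is in your verification of \eqref{k_condition}. Two points fail: first, the integrand $G(w/\rho)$ is \emph{not} bounded uniformly as $\rho\to 0$ even though $w$ is bounded, so the fact that $\mint w^q\to 0$ does not by itself yield smallness of $\mint G(w/\rho)$ relative to $G(k/\rho)$; this is precisely the non-trivial step. Second, your proposed remedy---the degenerate/singular/intermediate case split from Theorem~\ref{boundednesstheorem}---is the wrong tool. That split is designed to \emph{find} some (possibly very large) level $k$ for which \eqref{k_condition} holds, via sequences $s_m\to\infty$; here the situation is reversed: $k$ is prescribed arbitrarily small and one must choose $\rho$ accordingly. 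No case analysis on $a,A$ is needed or helpful.

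What the paper actually does is this: fix $0<k<\eps$, set $\theta=k^2G(k/\rho)^{-1}$, split the integral of $G(v/\rho)$ over $\{v<\delta\}$ and $\{v\ge\delta\}$ with $\delta=\tfrac{\sigma}{4}k$, bound the first part trivially by $G(\delta/\rho)\le \tfrac{\sigma}{2}G(k/\rho)$, and control the second part by a \emph{finite} dyadic decomposition $\{2^j\delta\le v<2^{j+1}\delta\}$, $j=0,\dots,N-1$ (finiteness requires $v\in L^\infty_{\loc}$), pulling the $\Delta_2$-condition through to reduce everything to
\[
\mint_{\widetilde Q(\rho,\theta)}\!|u(x_0,t_0)-u|^{g_1}\dx\dt\quad\text{and}\quad \mint_{\widetilde Q(\rho,\theta)}\!|u(x_0,t_0)-u|^{2}\dx\dt,
\]
both of which tend to $0$ as $\rho\to 0$ by Lebesgue differentiation. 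This is the missing mechanism in your sketch. You also need the preliminary reduction to $u\in L^\infty_{\loc}(\Omega_T)$ via $u_m=\min\{u,m\}$, both so that the $L^{g_1}$ Lebesgue point property is available and so that the dyadic decomposition terminates; this step is absent from your outline.
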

\begin{proof}
Since $u$ is bounded from below by Corollary~\ref{supercorollary}, $\hat u$ is lower semicontinuous by Lemma~\ref{essliminflsc}. Thus, it suffices to show that $u=\hat u$ almost everywhere in $\Omega_T$. 

Assume without loss of generality that $u\in L_{\textrm{loc}}^\infty(\Omega_T)$. Indeed, by Lemma~\ref{minsuper} the function $u_m:=\min\{u,m\}$ is a weak supersolution for every $m\in\N$ and, furthermore, $u_m\in L_{\textrm{loc}}^\infty(\Omega_T)$ by Corollary~\ref{supercorollary}. Therefore, if we show that $u_m=\hat u_m$ almost everywhere in $\Omega_T$, then by the inclusion 
\begin{multline*}
 \big\{(x,t)\in\Omega_T:u(x,t)\neq\hat u(x,t)\big\}\\
 \subset\big\{(x,t)\in\Omega_T:|u(x,t)|=\infty\big\}\cup \bigcup_{m=1}^\infty\big\{(x,t)\in\Omega_T:u_m(x,t)\neq\hat u_m(x,t)\big\}
\end{multline*}
and the fact that as an integrable function $u$ is finite almost everywhere in $\Omega_T$ it follows that $u=\hat u$ almost everywhere in $\Omega_T$.

Set $U:=\{(x,t)\in\Omega_T:|u(x,t)|<\infty\}$ and denote the set of Lebesgue points of $u$ in $\Omega_T$ by $V$. Since almost every point is a Lebesgue point and $u$ is integrable, we clearly have $|\Omega_T\setminus(U\cap V)|=0$. Hence, by showing that $u=\hat u$ in $U\cap V$ we obtain the result. 

To this end, fix $\eps>0$ and take $0<k<\eps$. Let $(x_0,t_0)\in U\cap V$ and denote $\widetilde Q(\rho,\theta):=B_\rho(x_0) \times\left(t_0-\frac12\theta,t_0+\frac12\theta\right)$, where $\rho$ and $\theta$ are chosen such that \eqref{k_condition} holds. Observe that Lemma~\ref{k_lemma} clearly holds also for cylinders of the type $\widetilde Q(\rho,\theta)\Subset\Omega_T$. We define the nonnegative weak subsolution 
\[
v:=\big(u(x_0,t_0)-u\big)_+
\]
and aim to show that 
\[
 \mint_{\widetilde Q(\rho,\theta)}\left(G\left(\frac{v}{\rho}\right)+\frac{v^2}{\theta}\right)\dx\dt\leq \sigma G\left(\frac{k}{\rho}\right)
\]
for $\rho$ small enough. Take $\delta>0$ to be fixed shortly and let $\mathcal K\Subset\Omega_T$ be a set including $(x_0,t_0)$. Since $v$ is locally bounded by Theorem~\ref{boundednesstheorem}, we may take $N$ to be the smallest positive integer such that $||v||_{L^\infty(\mathcal K)}< 2^N\delta$. Now for every $\rho>0$ satisfying $\widetilde Q(\rho,\theta)\subset \mathcal K$ we have 
\begin{align*}
 \mint_{\widetilde Q(\rho,\theta)}G\left(\frac{v}{\rho}\right)\chi_{\{v\geq\delta\}}\dx\dt&=\sum_{j=0}^{N-1}\mint_{\widetilde Q(\rho,\theta)}G\left(\frac{v}{\rho}\right)\chi_{\{2^j\delta\leq v<2^{j+1}\delta\}}\dx\dt\\
 &\leq \sum_{j=0}^{N-1}G\left(\frac{2^{j+1}\delta}{\rho}\right)\mint_{\widetilde Q(\rho,\theta)}\chi_{\{2^j\delta\leq v<2^{j+1}\delta\}}\dx\dt\\
 &\leq \sum_{j=0}^{N-1} G\left(\frac{2^{j+1}\delta}{\rho}\bigg(\mint_{\widetilde Q(\rho,\theta)}\chi_{\{2^j\delta\leq v<2^{j+1}\delta\}}\dx\dt\bigg)^{1/g_1}\right)\\
 &\leq \sum_{j=0}^{N-1} G\left(\frac2\rho\,\bigg(\mint_{\widetilde Q(\rho,\theta)}v^{g_1}\dx\dt\bigg)^{1/g_1}\right)\\
 &\leq G\left(\frac{2N}\rho\,\bigg(\mint_{\widetilde Q(\rho,\theta)}\big|u(x_0,t_0)-u\big|^{g_1}\dx\dt\bigg)^{1/g_1}\right).
\end{align*}

Since $(x_0,t_0)$ is a Lebesgue point and $u$ belongs to $L^p(\mathcal K)$ for every $1\leq p\leq\infty$, Lebesgue's differentiation theorem gives 
\[
 \lim_{\rho\to 0}\mint_{\widetilde Q(\rho,\theta)}\big|u(x_0,t_0)-u\big|^{p}\dx\dt = 0
\]
for $1\leq p<\infty$. In particular, we use this for $p=g_1$ and $p=2$ to find $\rho_0\equiv\rho_0(g_1,\sigma,k,N)$ and $\theta_0=k^2G\left(\frac{k}{\rho_0}\right)^{-1}$ such that $\widetilde Q(\rho_0,\theta_0)\subset \mathcal K$, 
\[
 \mint_{\widetilde Q(\rho_0,\theta_0)}\big|u(x_0,t_0)-u\big|^{g_1}\dx\dt \leq \left(\frac{\sigma}{8N}k\right)^{g_1},
\]
and
\[
 \mint_{\widetilde Q(\rho_0,\theta_0)}\big|u(x_0,t_0)-u\big|^{2}\dx\dt \leq \frac{\sigma}{2}k^2.
\]
Thus, by choosing $\delta=\frac{\sigma}{4}k$ we obtain the desired inequality 
\begin{align*}
 \mint_{\widetilde Q(\rho_0,\theta_0)}\left(G\left(\frac{v}{\rho_0}\right)+\frac{v^2}{\theta_0}\right)\dx\dt&\leq G\left(\frac{\delta}{\rho_0}\right)+\mint_{\widetilde Q(\rho_0,\theta_0)}G\left(\frac{v}{\rho_0}\right)\chi_{\{v\geq\delta\}}\dx\dt\\
 &\quad+\frac{1}{k^2}G\left(\frac{k}{\rho_0}\right)\mint_{\widetilde Q(\rho_0,\theta_0)}\big|u(x_0,t_0)-u\big|^{2}\dx\dt\\
 &\leq 2\,G\left(\frac{\sigma}{4}\frac{k}{\rho_0}\right)+\frac{\sigma}{2}G\left(\frac{k}{\rho_0}\right)\\
 &\leq \sigma G\left(\frac{k}{\rho_0}\right).
\end{align*}

Let $r_0>0$ be so small that $Q_{r_0}(x_0,t_0)\subset \widetilde Q(\rho_0/2,\theta_0/2)$. Then for every $0<r\leq r_0$ we have 
\[
 u(x_0,t_0)-\essinf_{Q_r(x_0,t_0)}u\leq \esssup_{Q_r(x_0,t_0)}v\leq \esssup_{\widetilde Q(\rho_0/2,\theta_0/2)}v \leq k < \eps 
\]
by Lemma~\ref{k_lemma}, and therefore 
\[
 u(x_0,t_0) < \hat u(x_0,t_0) + \eps.
\]
Since $\eps>0$ was arbitrary, we obtain 
\[
 u(x_0,t_0)\leq \hat u(x_0,t_0).
\]
The other direction follows from Lebesgue's differentiation theorem, since 
\[
 u(x_0,t_0) = \lim_{r\to 0}\mint_{Q_r(x_0,t_0)}u\dx\dt \geq \lim_{r\to 0}\essinf_{Q_r(x_0,t_0)}u = \hat u(x_0,t_0),
\]
and we are done. 
\end{proof}

\section{Obstacle problem}\label{obstacleproblem}

In this section we prove the existence of a unique solution to the bounded obstacle problem related to equation \eqref{eq}. Moreover, we show that if the obstacle is continuous, the same property is inherited by the solution. 

\begin{Definition}
 A function $u$ is a solution to the obstacle problem with the obstacle $\psi$ in $\Omega_T$, if $u$ is the smallest $\essliminf$-regularized (see \eqref{essliminfreg}) weak supersolution in $\Omega_T$ that lies above $\psi$ almost everywhere in $\Omega_T$.
\end{Definition}

Let us first consider merely bounded obstacles. The existence of a solution to the obstacle problem follows fairly easily using results from the previous sections. The idea of the proof is the same as in \cite{LP} for the $p$-Laplacian. 

\begin{Theorem}\label{bddexistence}
 Let $\psi\in L^\infty(\Omega_T)$. Then there exists a unique solution to the obstacle problem with the obstacle $\psi$ and, moreover, it belongs to $L^\infty(\Omega_T)$.
\end{Theorem}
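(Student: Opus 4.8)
The plan is to construct the solution as the limit of an increasing sequence of weak supersolutions obtained by solving auxiliary problems in which the obstacle is smoothed, and then identify this limit with the smallest supersolution above $\psi$. First I would fix a large constant $M$ with $\|\psi\|_{L^\infty(\Omega_T)}\le M$ and, by standard mollification, approximate $\psi$ from below by a sequence $\psi_j$ of smooth functions with $\psi_j\nearrow\psi$ a.e.\ and $\|\psi_j\|_{L^\infty}\le M$. For each $j$ one solves the classical obstacle problem with the \emph{smooth} obstacle $\psi_j$: this is done by penalization, i.e.\ one adds a penalty term of the form $\tfrac{1}{\eps}(\psi_j-u)_+$ (suitably rewritten in divergence form with the Orlicz structure) to the equation, solves the penalized Cauchy--Dirichlet problem by Galerkin/monotone operator theory in $V_0^{2,G}$, derives uniform energy estimates via a Caccioppoli-type argument, and passes $\eps\to0$ to get a weak supersolution $u_j$ with $u_j\ge\psi_j$ a.e., $u_j$ a weak solution in the set $\{u_j>\psi_j\}$, $|u_j|\le M$, and (by the comparison principle, Proposition~\ref{comparisonprinciple}, together with the monotonicity $\psi_j\le\psi_{j+1}$) the monotonicity $u_j\le u_{j+1}$ a.e.

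Next I would take $u:=\lim_{j\to\infty}u_j$, which exists pointwise a.e.\ by monotonicity and is bounded by $M$. By Theorem~\ref{superlimit}, the monotone bounded limit $u$ of weak supersolutions is again a weak supersolution in $\Omega_T$, and clearly $u\ge\psi$ a.e.\ since $u\ge u_j\ge\psi_j$ and $\psi_j\nearrow\psi$. Replacing $u$ by its $\essliminf$-regularization $\hat u$ (which by Theorem~\ref{superlsc} agrees with $u$ a.e.\ and is lower semicontinuous) we obtain an $\essliminf$-regularized weak supersolution lying above $\psi$ a.e., and it lies in $L^\infty(\Omega_T)$.

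It then remains to show $u$ is the \emph{smallest} such function and that it is unique. For minimality, let $w$ be any $\essliminf$-regularized weak supersolution with $w\ge\psi$ a.e.; I would show $u_j\le w$ a.e.\ for every $j$, whence $u=\lim u_j\le w$ a.e.\ and so $\hat u\le \hat w = w$. To compare $u_j$ and $w$ one argues on the contact set: on $\{u_j>\psi_j\}$ the function $u_j$ solves the equation, while $w$ is a supersolution there with $w\ge\psi\ge\psi_j$, so a comparison/localization argument (using that $\min\{u_j,w\}$ is a supersolution by Lemma~\ref{minsuper}, that it lies above $\psi_j$, and the minimality characterization of $u_j$ among supersolutions above $\psi_j$) forces $u_j\le w$; here one must be slightly careful because the comparison principle as stated requires continuity, so this step is run through the penalized approximants or through the explicit construction of $u_j$ as the smallest supersolution above $\psi_j$. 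Uniqueness is immediate from minimality: two smallest elements must coincide a.e., and both being $\essliminf$-regularized they are equal everywhere. The main obstacle I expect is the first step — setting up and solving the smoothed obstacle problems and extracting uniform, $M$-independent-in-sign estimates in the Orlicz setting where the equation may be simultaneously degenerate and singular — since the energy bounds from Lemma~\ref{caccioppoli} are only qualitative; but since $\psi_j$ is smooth and bounded this can be pushed through by the monotone-operator method exactly as in~\cite{LP}, with Theorem~\ref{superlimit} doing the essential work in the limit.
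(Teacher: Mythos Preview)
Your route is genuinely different from the paper's, and considerably more laborious. The paper does not approximate $\psi$ at all and never solves auxiliary penalized or smooth-obstacle problems. Instead it argues ``from above'': set $M=\|\psi\|_{L^\infty}$, let $\mathcal S_\psi$ be the class of all $\essliminf$-regularized weak supersolutions $\ge\psi$ a.e.\ (nonempty since the constant $M$ belongs to it), and define $w(x,t)=\inf_{v\in\mathcal S_\psi}v(x,t)$. Because $\mathcal S_\psi$ is stable under $\min$ (Lemma~\ref{minsuper}), Choquet's topological lemma produces a \emph{decreasing} sequence $u_i\in\mathcal S_\psi$ with the same lower envelope as $w$; Theorem~\ref{superlimit} makes the limit $u$ a weak supersolution, and lower semicontinuity of $u$ a.e.\ forces $u=w$ a.e. Hence $\hat w\in\mathcal S_\psi$, and minimality and uniqueness are immediate from the definition of $w$ as a pointwise infimum. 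The whole proof is a short paragraph.

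Your bottom-up construction via $\psi_j\nearrow\psi$ and penalization could in principle be made to work, but it front-loads two substantial tasks the paper never needs: (i) an existence theory for the penalized equation in $V_0^{2,G}$ with Orlicz growth, and (ii) a proof that the resulting $u_j$ is actually the \emph{smallest} supersolution above $\psi_j$. You recognize that (ii) is the crux of your minimality step and that the available comparison principle (Proposition~\ref{comparisonprinciple}) is stated only for continuous functions, but you do not close this gap---invoking ``the minimality characterization of $u_j$'' is circular unless (ii) is established independently. The paper's infimum-plus-Choquet argument sidesteps both issues entirely; what your route would buy, namely that the approximants solve the equation off the contact set, is obtained later in the paper by separate means anyway.
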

\begin{proof}
By Theorem~\ref{superlsc} every weak supersolution $v$ has a representative such that $v=\hat v$ everywhere in $\Omega_T$. We consider the class of all such functions that lie above $\psi$ almost everywhere and show that the $\essliminf$-regularization of the pointwise infimum taken over this class meets the requirements of a solution.

To this end, denote the set of all weak supersolutions in $\Omega_T$ by $\mathcal{S}$ and define 
\[
 \mathcal{S}_\psi := \{v\in\mathcal{S}: v=\hat v\,\, \textrm{everywhere in} \,\Omega_T, v\geq\psi\,\, \textrm{almost everywhere in} \,\Omega_T\}.
\]
Since $\mathcal{S}_\psi$ includes the constant function $v\equiv M:=||\psi||_{L^\infty(\Omega_T)}$, it is nonempty. We set for all $(x,t)\in\Omega_T$ 
\[
 w(x,t):=\inf_{v\in\mathcal{S}_\psi}v(x,t)
\]
and aim to show that $\hat w$ is a solution. If $v$ is an $\essliminf$-regularized weak supersolution with $v\geq\psi$ almost everywhere in $\Omega_T$, then obviously $v\geq \hat w$ in $\Omega_T$. Thus, to prove that $\hat w$ is a solution we need to show that $\hat w\in\mathcal{S}_\psi$. In fact, it suffices to show that $w\in\mathcal{S}$, since then by Theorem~\ref{superlsc} $w=\hat w$ almost everywhere in $\Omega_T$, which implies $\hat w\in\mathcal{S}_\psi$. Notice also that $\hat w\in L^\infty(\Omega_T)$, since $w\geq\psi\geq -M$ almost everywhere and $w\leq M$ everywhere in $\Omega_T$, implying $|\hat w|\leq M$ in $\Omega_T$. 

If $v_1,v_2\in\mathcal{S}_\psi$, then by Lemma~\ref{minsuper} also $\min\{v_1,v_2\}\in\mathcal{S}_\psi$. Therefore, by Choquet's topological lemma, see p.~158 in \cite{HKM}, there exist a function $u$ and a decreasing sequence $(u_i)_{i=1}^\infty$ such that $u_i\in \mathcal{S}_\psi$ for every $i\in\N$, $u_i\to u$ everywhere as $i\to\infty$, and 
\[
 \liminf_{(y,s)\to(x,t)}u(y,s)=\liminf_{(y,s)\to(x,t)}w(y,s)
\]
for every $(x,t)\in\Omega_T$. Clearly $u\geq w$ in $\Omega_T$. Without loss of generality we may assume that $|u_i|\leq M$ in $\Omega_T$ for every $i\in\N$, and thus by Lemma~\ref{superlimit} $u$ is a weak supersolution in $\Omega_T$. But now at almost every $(x,t)\in\Omega_T$ we know that $u$ is lower semicontinuous by Theorem~\ref{superlsc}, which leads to 
\[
 w(x,t)\leq u(x,t)\leq \liminf_{(y,s)\to(x,t)}u(y,s)=\liminf_{(y,s)\to(x,t)}w(y,s)\leq w(x,t).
\]
Therefore, $w=u$ almost everywhere in $\Omega_T$, whence $w\in\mathcal{S}$ and we deduce that $\hat w$ is a solution to the obstacle problem with the obstacle $\psi$. Uniqueness is trivial, since $\hat w$ is the smallest function in $\mathcal{S}_\psi$. 
\end{proof}

For the rest of the section we shall turn our attention to continuous obstacles. Since $C^0(\overline\Omega_T^p)\subset L^\infty(\Omega_T)$, the existence of a unique solution is given by Theorem~\ref{bddexistence}. Now the interesting question is if the solution is also continuous. To answer this question we construct a sequence of functions using a modification of the Schwarz alternating method and show that the limit is indeed a continuous solution to the obstacle problem. By the uniqueness we then deduce that this solution must be the same as the one given by Theorem~\ref{bddexistence}. Moreover, we prove that whenever the solution does not coincide with the obstacle, it is in fact a weak solution. The proof follows the same guidelines as \cite{KKS} for parabolic $p$-Laplace type equations. 

Observe that when $\psi\in C^0(\overline\Omega_T^p)$, the solution to the obstacle problem in fact lies above $\psi$ everywhere.

We collect here two important results that will be needed later. They are proved in \cite{BL}. 

\begin{Theorem}\label{existenceofsolution}
 Let $Q:=B\times\Gamma\subset\Omega_T$, where $B$ is a ball in $\R^n$, and let $\theta\in C^0(\overline\Omega_T^p)$. Then there exists a unique weak solution $u$ in $Q$ such 
 that $u\in C^0(\overline Q^p)$ and $u=\theta$ on $\partial_p Q$.
\end{Theorem}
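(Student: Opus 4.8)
The plan is to deduce uniqueness from the comparison principle and to construct the solution by a double approximation: first regularize the equation so that classical parabolic theory applies to smooth boundary data, then approximate a general continuous datum uniformly by smooth ones and pass to the limit using the comparison principle.

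\emph{Uniqueness.} If $u_1,u_2\in C^0(\overline Q^p)$ are weak solutions in $Q$ with $u_1=u_2=\theta$ on $\partial_pQ$, then, viewing $u_1$ as a subsolution and $u_2$ as a supersolution and vice versa, Proposition~\ref{comparisonprinciple} gives $u_1\le u_2$ and $u_2\le u_1$ in $\overline Q^p$, hence $u_1=u_2$. The same device applied to $u_j$ and $u_k$ shifted by $\|\theta_j-\theta_k\|_{L^\infty(\partial_pQ)}$ will later turn uniform convergence of boundary data into uniform convergence of solutions.

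\emph{Existence for smooth data.} Assume first $\theta\in C^\infty(\overline Q^p)$. For $\eps\in(0,1)$ replace $\A$ by a smooth, uniformly elliptic vector field $\A_\eps$ of controlled growth (for instance $\A_\eps(\xi):=\A(\xi)+\eps\,\xi$, with $g$ first mollified near $0$ so that $\A_\eps\in C^1$); classical quasilinear parabolic theory, or monotone operator methods in $V^{2,G}_0$, then yields a solution $u_\eps$ of the regularized Cauchy--Dirichlet problem on $Q$ with $u_\eps=\theta$ on $\partial_pQ$ and enough regularity to test freely. The core of the argument is a list of bounds independent of $\eps$: (i) $\|u_\eps\|_{L^\infty(\overline Q^p)}\le\|\theta\|_{L^\infty(\partial_pQ)}$ by the maximum principle; (ii) a uniform bound for $\int_Q G(|Du_\eps|)\dx\dt$ from the Caccioppoli inequality (the regularized analogue of Lemma~\ref{caccioppoli}), and hence, via $\eqref{assumptionsO}_2$ and \eqref{Yf}, a uniform bound for $\int_Q\widetilde G(|\A_\eps(Du_\eps)|)\dx\dt$; (iii) a uniform \emph{interior} modulus of continuity, obtained by a De Giorgi iteration carried out in the intrinsic geometry \eqref{geometry} (as in Lemma~\ref{k_lemma}) followed by an oscillation decay step; and (iv) a uniform \emph{boundary} modulus of continuity, obtained from barriers, which exist because $\partial B$ satisfies a uniform exterior ball condition and the initial slice $\overline B\times\{\inf\Gamma\}$ is flat. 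By Arzel\`a--Ascoli a subsequence of $(u_\eps)$ converges uniformly on $\overline Q^p$ to some $u\in C^0(\overline Q^p)$ with $u=\theta$ on $\partial_pQ$. A Minty-type monotonicity argument, or the reasoning of Lemma~\ref{gradientconvergence}, gives $Du_\eps\to Du$ almost everywhere, so $\A_\eps(Du_\eps)\to\A(Du)$ almost everywhere; combined with (ii), Hölder's inequality, and Lemma~\ref{characteristicnorm}, this upgrades to $\A_\eps(Du_\eps)\to\A(Du)$ in $L^1(Q)$, and one passes to the limit in the weak formulation to conclude that $u$ is a weak solution.

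\emph{General continuous data and main obstacle.} For $\theta\in C^0(\overline Q^p)$ choose $\theta_j\in C^\infty(\overline Q^p)$ with $\theta_j\to\theta$ uniformly, and let $u_j$ be the continuous solution with datum $\theta_j$ produced above. Since on $\partial_pQ$ we have $\theta_j\le\theta_k+\|\theta_j-\theta_k\|_{L^\infty(\partial_pQ)}$ and the symmetric bound, Proposition~\ref{comparisonprinciple} yields $\|u_j-u_k\|_{L^\infty(\overline Q^p)}\le\|\theta_j-\theta_k\|_{L^\infty(\partial_pQ)}$, so $(u_j)$ is uniformly Cauchy and converges uniformly to some $u\in C^0(\overline Q^p)$ with $u=\theta$ on $\partial_pQ$; the energy and $L^{\widetilde G}$ bounds and the almost everywhere gradient convergence pass to this sequence as well, and passing to the limit in the weak formulation gives that $u$ is the desired continuous weak solution, whose uniqueness was already established. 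The genuinely delicate point is the $\eps$-independent modulus of continuity in (iii)--(iv): because \eqref{eq} can be degenerate and singular at once when $g_0<2<g_1$, DiBenedetto's intrinsic-scaling scheme has to be run with the $G$-adapted geometry \eqref{geometry}, and one must check that the comparison functions built from $G$ and $G^{-1}$ respect this scaling; the remaining steps are routine adaptations of the $p$-Laplacian arguments.
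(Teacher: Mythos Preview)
The paper does not actually prove this theorem: it is stated as one of ``two important results that will be needed later'' and the proof is deferred to \cite{BL}. So there is no in-paper argument to compare against, and your overall architecture---uniqueness from Proposition~\ref{comparisonprinciple}, existence by regularizing $\A$, obtaining $\eps$-independent estimates, and then approximating general continuous data by smooth ones via the comparison principle---is the standard one and is almost certainly the skeleton of the argument in \cite{BL} as well.

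There is, however, a genuine gap in your step~(iii). You propose to obtain the uniform interior modulus of continuity by ``a De Giorgi iteration \dots\ followed by an oscillation decay step,'' and you call the remaining steps ``routine adaptations of the $p$-Laplacian arguments.'' But the introduction of this very paper points out that H\"older continuity of solutions to equations with Orlicz growth and merely measurable coefficients is \emph{still open} in the mixed regime $g_0<2<g_1$; only the purely degenerate and purely singular cases are settled in \cite{HL1,HL2}. An $\eps$-uniform De Giorgi oscillation decay for $u_\eps$ would, in effect, solve that open problem (the regularization does not help, since the estimates you want must survive as $\eps\to0$ and hence cannot use the uniform ellipticity of $\A_\eps$). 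The route that \cite{BL} actually takes---and which this paper invokes in the proof of Lemma~\ref{continuity}---is different: one uses the $C^1$ structure of $\A$ to prove the local Lipschitz gradient bound recorded here as Theorem~\ref{aprioritheorem}, which gives spatial Lipschitz continuity with constants depending only on the data; time continuity then follows from the equation and the spatial regularity (this is the ``Proposition~4.2 in \cite{BL}'' referred to in the proof of Lemma~\ref{continuity}). So your step~(iii) should be replaced by an appeal to the $\eps$-uniform version of Theorem~\ref{aprioritheorem} for the regularized problem, not by a De Giorgi oscillation argument at the level of $u$. With that substitution the rest of your outline---boundary barriers in~(iv), Arzel\`a--Ascoli, Lemma~\ref{gradientconvergence}-type gradient convergence, and the uniform-limit passage for general $\theta$---goes through.
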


\begin{Theorem}\label{aprioritheorem}
 Let $u$ be the weak solution in $Q$ given by Theorem~\ref{existenceofsolution}. Then there exists a constant $c\equiv c(n,g_0,g_1,\nu,L)$ such that 
 \[
 ||Du||_{L^\infty(Q_R)}\leq c\left(\mint_{Q_{2R}}\big[G(|Du|)+1\big]\dx\dt\right)^{\max\left\{\frac12,\frac2{(n+2)g_0-2n}\right\}}
 \]
 for every parabolic cylinder $Q_{2R}\Subset Q$.
 \end{Theorem}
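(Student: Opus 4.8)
The plan is to follow the standard two-step De Giorgi--Nash--Moser strategy for gradient bounds of degenerate/singular parabolic equations, adapted to Orlicz growth: first establish a reverse-H\"older-type (self-improving) estimate for $G(|Du|)$ via a Caccioppoli inequality for the gradient, then run a Moser-type iteration to pass from an integral bound to a sup bound. Since $u$ is the continuous weak solution furnished by Theorem~\ref{existenceofsolution}, one may legitimately differentiate the equation: formally $w:=D_\ell u$ solves a linear parabolic equation $\partial_t w - \divergence(D\A(Du)Dw)=0$, whose coefficient matrix $D\A(Du)$ satisfies, by \eqref{assumptionsO}, the ellipticity/boundedness bounds $\nu\, g(|Du|)/|Du|\,|\zeta|^2 \le \langle D\A(Du)\zeta,\zeta\rangle$ and $|D\A(Du)|\le L\, g(|Du|)/|Du|$. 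The natural quantity to iterate is $1+|Du|^2$, or more precisely $G(|Du|)+1$, and the correct intrinsic geometry is the one dictated by \eqref{geometry}: on a cylinder one rescales time by a factor comparable to $k^2 G(k/\rho)^{-1}$ with $k$ of the order of $\esssup |Du|$, exactly as in Section~\ref{qualitativeproperties}.

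Concretely, I would proceed as follows. First, derive a Caccioppoli inequality for the functions $(|Du|-k)_+$ (or for $V_g(Du)$, using Lemma~\ref{Vglemma} and Lemma~\ref{monotonicity} to control the difference quotients), by testing the differentiated equation with $(|Du|-k)_+\varphi^2$-type test functions; the difference-quotient formulation avoids assuming more regularity than we have, and the passage to the limit is justified because $u$ is a genuine weak solution in $W^{1,G}$ with $\A(Du)\in L^{\widetilde G}$. Second, combine this Caccioppoli estimate with the parabolic Sobolev inequality (Lemma~\ref{parabolicsobolev}, with $q=2n/(n+2)$, using $g_0>2n/(n+2)$) to obtain, on the intrinsic cylinders, a set of inequalities $Y_{j+1}\le c\,b^j Y_j^{1+\kappa}$ for suitable level sets $Y_j$ of $G(|Du|)$, with $\kappa\equiv\kappa(n)>0$; the hyper-geometric iteration lemma (Lemma~4.1 in \cite{DiBenedetto}) then gives $\esssup_{Q_R}G(|Du|)\le c\,\big(\mint_{Q_{2R}}[G(|Du|)+1]\dx\dt\big)^{1+\kappa^{-1}}$ up to absorbing the time-scaling factors. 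Finally, unwinding $G$ via the $\Delta_2$-bounds \eqref{O.property3}--\eqref{gDelta2} converts the bound on $G(|Du|)$ into the stated bound on $\|Du\|_{L^\infty(Q_R)}$, and a careful bookkeeping of the exponents — the intrinsic scaling contributes a factor that, after the $g_0$-homogeneity estimates, produces exactly the exponent $\max\{\frac12,\frac{2}{(n+2)g_0-2n}\}$.

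The main obstacle is precisely the one flagged in the introduction: because $g_0<2<g_1$ is allowed, the equation can be simultaneously degenerate and singular, so the intrinsic cylinder depends on the (a priori unknown) size of $|Du|$ in a non-monotone way, and one cannot simply choose one fixed geometry. The resolution is to split according to whether $\esssup|Du|$ is large or bounded (the $\max$ in the exponent reflects exactly this dichotomy, the $\frac12$ branch being the "$p\ge 2$-like" regime and the $\frac{2}{(n+2)g_0-2n}$ branch the singular regime) and to check that in each regime the rescaled vector field $\bar\A$ still satisfies \eqref{assumptionsO} with the same constants and with $\bar g$ obeying \eqref{O.property} with the same $g_0,g_1$ — a fact already recorded after \eqref{geometry}. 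Since this is a known result cited as proved in \cite{BL}, the write-up can reasonably be kept to a sketch, indicating the test functions, the role of Lemmas~\ref{monotonicity}, \ref{Vglemma}, \ref{caccioppoli}, and \ref{parabolicsobolev}, the intrinsic scaling \eqref{geometry}, and the iteration lemma, and then referring to \cite{BL} (and to \cite{Lieberman1, LiebPara1, DiBenedetto}) for the full details.
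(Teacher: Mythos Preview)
The paper does not prove Theorem~\ref{aprioritheorem} at all: it is stated and immediately attributed to \cite{BL} (see the sentence ``They are proved in \cite{BL}'' preceding Theorems~\ref{existenceofsolution} and~\ref{aprioritheorem}). So there is no in-paper argument to compare your sketch against.

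That said, your outline is a faithful description of how the result is actually obtained in \cite{BL} (and in the earlier literature \cite{Lieberman1,LiebPara1,DiBenedetto} it builds on): differentiate the equation via difference quotients, derive a Caccioppoli inequality for $|Du|$ using the structure \eqref{assumptionsO} together with Lemmas~\ref{monotonicity} and~\ref{Vglemma}, couple it with the parabolic Sobolev inequality of Lemma~\ref{parabolicsobolev} on intrinsic cylinders governed by \eqref{geometry}, and iterate. Your identification of the source of the exponent $\max\{\tfrac12,\tfrac{2}{(n+2)g_0-2n}\}$ as a degenerate/singular dichotomy is also correct. Since the paper itself is content with a citation, your proposal in fact goes further than what is required here; a one-line reference to \cite{BL} would already match the paper.
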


Let us begin with the construction of a candidate for a solution. 

\begin{Construction}\label{construction}
 Let $\mathcal{F}$ be a countable and dense family of cylinders defined as 
 \[
 \mathcal{F}=\big\{Q^k\subset\Omega_T:Q^k=B_{r_k}(x_k)\times (\tau_k,T),r_k,\tau_k\in\Q,x_k\in\Q^n\big\}.
 \]
Construct a sequence of functions $(\varphi_k)_{k=0}^{\infty}$ 
as follows: 
\[
 \varphi_0=\psi,\qquad\varphi_{k+1}=\max\{\varphi_k,v_k\},
\]
where $v_k$ is a weak solution in $Q^k$ with boundary values $\varphi_k$ on $\partial_p Q^k$ and $v_k=\varphi_k$ in 
$\Omega_T\setminus Q^k$. Denote the limit, if it exists, by 
\[
 u^* := \lim_{k\to\infty}\varphi_k.
\]
\end{Construction}

We easily deduce the following basic properties.

\begin{Proposition}\label{properties}
 \begin{enumerate}[(i)]
  \item We have $\varphi_k\geq \psi$ in $\Omega_T$ for every $k\in\N_0$. 
  \item The function $\varphi_k$ is continuous for every $k\in\N_0$. 
  \item We have 
  \[
   |\varphi_k|\leq\sup_{\Omega_T}|\psi|
  \]
  in $\Omega_T$ for every $k\in\N_0$. 
  \item The limit $u^*$ always exists and $u^*\geq\psi$ in $\Omega_T$. 
  \item If $v$ is an $\essliminf$-reqularized weak supersolution with $v\geq\psi$ almost everywhere in $\Omega_T$, then $v\geq u^*$ in $\Omega_T$.  
  \item The limit $u^*$ is lower semicontinuous.
 \end{enumerate}
\end{Proposition}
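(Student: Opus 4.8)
The plan is to verify the six items essentially in order, each following from the Construction together with results already established. For (i), argue by induction: $\varphi_0=\psi\geq\psi$ trivially, and if $\varphi_k\geq\psi$ then on $Q^k$ the solution $v_k$ is a weak supersolution lying above $\psi$ on $\partial_pQ^k$ (since its boundary values are $\varphi_k\geq\psi$), so by the comparison principle (Proposition~\ref{comparisonprinciple}) $v_k\geq\psi$ in $Q^k$; hence $\varphi_{k+1}=\max\{\varphi_k,v_k\}\geq\psi$ everywhere. For (ii), again induct: $\varphi_0=\psi$ is continuous, $v_k\in C^0(\overline{Q^k}^p)$ by Theorem~\ref{existenceofsolution} and $v_k=\varphi_k$ on $\partial_pQ^k$, so the pasted function $v_k$ (equal to $\varphi_k$ outside $Q^k$) is continuous across $\partial_pQ^k$; the maximum of two continuous functions is continuous. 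For (iii), induct once more using the maximum principle (Corollary~\ref{maximumprinciple}): $v_k$ attains its sup/inf on $\partial_pQ^k$ where it equals $\varphi_k$, so $\|v_k\|_{L^\infty(Q^k)}\leq\|\varphi_k\|_{L^\infty(\Omega_T)}\leq\sup_{\Omega_T}|\psi|$ by the inductive hypothesis, and then $|\varphi_{k+1}|\leq\sup_{\Omega_T}|\psi|$.

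For (iv): by construction $\varphi_{k+1}=\max\{\varphi_k,v_k\}\geq\varphi_k$, so $(\varphi_k)$ is nondecreasing, and by (iii) it is uniformly bounded; hence the pointwise limit $u^*=\lim_k\varphi_k$ exists everywhere, and $u^*\geq\varphi_0=\psi$. For (v), let $v$ be an $\essliminf$-regularized weak supersolution with $v\geq\psi$ a.e.; by Theorem~\ref{superlsc}, $v$ is lower semicontinuous, so $v=\hat v$ everywhere and in particular $v\geq\psi$ holds at every point where $\psi$ is continuous — but more robustly, I will show by induction that $v\geq\varphi_k$ in $\Omega_T$ for all $k$. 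The base case $v\geq\varphi_0=\psi$ needs care since $v\geq\psi$ only a.e.; however, $v$ is lower semicontinuous while $\varphi_k$ is continuous (by (ii)), so if $v\geq\varphi_k$ a.e. then $v\geq\varphi_k$ everywhere. For the inductive step, on $Q^k$ the function $v$ is a weak supersolution with $v\geq\varphi_k=v_k$ on $\partial_pQ^k$ (using the inductive hypothesis and continuity up to the parabolic boundary, together with the comparison principle which requires continuity — here one uses that $v$ restricted to a slightly smaller cylinder can be compared, or invokes a comparison principle for lsc supersolutions against continuous solutions); hence $v\geq v_k$ in $Q^k$ and trivially $v\geq\varphi_k$ outside, so $v\geq\varphi_{k+1}$. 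Passing to the limit gives $v\geq u^*$.

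Finally (vi): $u^*$ is the pointwise limit of the nondecreasing sequence of continuous functions $\varphi_k$, hence $u^*=\sup_k\varphi_k$, and the supremum of a family of continuous (in particular lower semicontinuous) functions is lower semicontinuous. The main obstacle is item (v): the comparison principle as stated (Proposition~\ref{comparisonprinciple}) requires both competitors to be continuous on the parabolic closure, whereas $v$ is only known to be lower semicontinuous. The resolution is to exploit that $v_k$ is continuous up to $\partial_pQ^k$ and that $v$, being a lower semicontinuous supersolution, dominates $v_k$ on $\partial_pQ^k$ in the appropriate sense; one then applies the comparison principle on compactly contained subcylinders $Q'\Subset Q^k$ (where one can arrange the needed regularity, e.g.\ by first comparing $v$ against the solution with boundary data $\min\{v,v_k\}$ or by a standard exhaustion argument) and lets $Q'\uparrow Q^k$. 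This is the one point where I would need to be careful about exactly which comparison statement is invoked.
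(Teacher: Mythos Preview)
Your overall approach matches the paper's, but two steps need repair.

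In (i), your invocation of Proposition~\ref{comparisonprinciple} to conclude $v_k\geq\psi$ is illegitimate: that comparison principle compares a weak \emph{sub}solution against a weak supersolution, but $\psi$ is merely a continuous obstacle, not a subsolution. Fortunately the step is unnecessary. Since $\varphi_{k+1}=\max\{\varphi_k,v_k\}\geq\varphi_k$, the sequence is nondecreasing, so $\varphi_k\geq\varphi_0=\psi$ for all $k$; this is precisely the paper's argument.

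In (v), your base-case justification ``$v$ lower semicontinuous, $\varphi_k$ continuous, $v\geq\varphi_k$ a.e.\ $\Rightarrow$ $v\geq\varphi_k$ everywhere'' is false: the function on $\R$ equal to $0$ away from the origin and $-1$ at the origin is lower semicontinuous and $\geq 0$ a.e., but not everywhere. What is needed is the stronger hypothesis that $v$ is $\essliminf$-regularized, i.e.\ $v=\hat v$ at every point. The paper exploits this directly:
\[
v(x,t)=\lim_{r\to 0}\essinf_{Q_r(x,t)}v\geq\lim_{r\to 0}\essinf_{Q_r(x,t)}\psi=\psi(x,t),
\]
the inequality coming from $v\geq\psi$ a.e.\ and the last equality from continuity of $\psi$. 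You actually had the right ingredient in hand (you noted $v=\hat v$) but then discarded it in favor of the weaker lsc property. For the inductive step you correctly flag that Proposition~\ref{comparisonprinciple} as stated requires continuity of both competitors; the paper simply invokes the comparison principle without further comment, so your caution there is well-founded and in fact goes beyond what the paper supplies.
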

\begin{proof}
 \begin{enumerate}[(i)]
  \item By definition $\varphi_0=\psi$ and for every $k\in\N$ we have 
  \[
   \varphi_{k}=\max\{\varphi_{k-1},v_{k-1}\}\geq\varphi_{k-1}\geq\ldots\geq\varphi_0=\psi
  \]
  in $\Omega_T$. Note also that the sequence $(\varphi_k)_{k=0}^{\infty}$ is nondecreasing. 
  \item The function $\varphi_0=\psi$ is continuous by assumption. Now, if $\varphi_k$ is continuous for some $k\in\N_0$, then so is 
  $\varphi_{k+1}$ as the maximum of continuous functions, since $v_k$ is a weak solution in $Q^k$ and therefore continuous by 
  Theorem~\ref{existenceofsolution}. 
  \item Clearly $\varphi_0=\psi\leq\sup_{\Omega_T}|\psi|$. Assume then that the claim holds for some $k\in\N_0$. By the maximum principle, 
  Corollary~\ref{maximumprinciple}, we have 
  \[
  |v_k|\leq\sup_{Q^k}|v_k|\leq\sup_{\partial_p Q^k}|v_k|=\sup_{\partial_p Q^k}|\varphi_k|\leq\sup_{\Omega_T}|\psi|
  \]
  in $Q^k$, and thus,
  \[
   |\varphi_{k+1}|=\left\{ \begin{array}{ll}
    |v_k|, & v_k>\varphi_k\\[2mm]
    |\varphi_k|, & v_k\leq\varphi_k\\[1mm]
    \end{array}\right.
    \leq\sup_{\Omega_T}|\psi|
  \]
  in $\Omega_T$. 
  \item The sequence $(\varphi_k)_{k=0}^{\infty}$ is nondecreasing and uniformly bounded, thus the limit $u^*$ exists. Since all the 
  members of the sequence are above $\psi$ by $(i)$, also the limit has to be. 
  \item Suppose $v$ is an $\essliminf$-reqularized weak supersolution with $v\geq\psi$ almost everywhere in $\Omega_T$. We show that $v\geq\varphi_k$ everywhere in $\Omega_T$ for every $k\in\N_0$, which implies $v\geq u^*$ in $\Omega_T$. Since 
  \[
  v(x,t)=\lim_{r\to 0}\essinf_{Q_r(x,t)}v\geq\lim_{r\to 0}\essinf_{Q_r(x,t)}\psi=\psi(x,t)
  \]
  at every $(x,t)\in\Omega_T$, the assertion holds for $\varphi_0=\psi$. If it holds for some $k\in\N_0$, then by the comparison principle, Lemma~\ref{comparisonprinciple}, $v\geq v_k$ in $Q^k$, since $v\geq \varphi_k=v_k$ on $\partial_p Q^k$. Therefore, we also have $v\geq\varphi_{k+1}$ in $\Omega_T$. 
  \item The function $u^*$ is the limit of a nondecreasing sequence of continuous functions, hence it is lower semicontinuous. \qedhere
 \end{enumerate}
 \end{proof}

So-called $\A$-superharmonic functions are often studied in the theory of elliptic equations. As shown in \cite{HKM}, there is a strong connection between $\mathcal{A}$-superharmonic functions and weak supersolutions. The same idea can be used also in the parabolic setting. We shall call the corresponding functions $\A$-superparabolic. 
 
\begin{Definition}
 A function $u:\Omega_T\to (-\infty,\infty]$ is called \emph{$\A$-superparabolic}, if 
 \begin{enumerate}[(i)]
  \item $u$ is lower semicontinuous,
  \item $u$ is finite in a dense subset of $\Omega_T$,
  \item $u$ satisfies the comparison principle in every cylinder $Q\Subset\Omega_T$, that is, if $h\in C^0(\overline Q^p)$ is a 
  weak solution in $Q$ and $h\leq u$ on $\partial_pQ$, then $h\leq u$ in $Q$.
 \end{enumerate}
\end{Definition}
 
In order to prove that the limit $u^*$ of Construction~\ref{construction} is a solution to the obstacle problem, by Proposition~\ref{properties} it suffices to show that it is a weak supersolution. For this we prove that it is both $\A$-superparabolic and continuous. Let us first show the former. 
 
\begin{Lemma}\label{A-super}
 The limit $u^*$ of Construction~\ref{construction} is $\A$-superparabolic.
\end{Lemma}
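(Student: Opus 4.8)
The plan is to verify the three defining properties of $\A$-superparabolicity for $u^*$ directly, using the basic properties already collected in Proposition~\ref{properties}. Properties (i) and (ii) come essentially for free: lower semicontinuity of $u^*$ is exactly Proposition~\ref{properties}(vi), and since $u^*$ is the increasing limit of the continuous (hence everywhere finite) functions $\varphi_k$, with $|\varphi_k|\leq\sup_{\Omega_T}|\psi|<\infty$ by Proposition~\ref{properties}(iii), the limit $u^*$ is in fact bounded and therefore finite everywhere in $\Omega_T$, which is much stronger than being finite on a dense subset.

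The substance of the proof is property (iii), the comparison principle. Fix a cylinder $Q\Subset\Omega_T$ and let $h\in C^0(\overline Q^p)$ be a weak solution in $Q$ with $h\leq u^*$ on $\partial_p Q$. I would argue by contradiction: suppose $\sup_Q(h-u^*)=:m>0$. Since $u^*$ is lower semicontinuous and $h$ is continuous, $h-u^*$ is upper semicontinuous on $\overline Q^p$, so it attains its supremum at some point of $\overline Q^p$; by the boundary hypothesis this maximum point lies in the open cylinder $Q$, and in particular the set $\{h - u^* > m/2\}$ is a nonempty open subset of $Q$. Because the family $\mathcal F$ is dense, one can find a cylinder $Q^k\in\mathcal F$ contained in $Q$ on whose parabolic boundary $h\leq\varphi_k + m/4$ holds (using that $\varphi_k\to u^*$ increasingly together with Dini-type control, or simply choosing $k$ large after fixing a small $Q^k$ where $u^*$, being lsc and bounded, does not drop by more than $m/4$ below its value at a chosen interior point, then using $\varphi_k\uparrow u^*$)—this is the delicate bookkeeping step. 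On $Q^k$ we have $v_k$ equal to the weak solution with boundary data $\varphi_k$, so $\varphi_{k+1}\geq v_k$ there; comparing the two weak solutions $h$ and $v_k$ on $Q^k$ via Proposition~\ref{comparisonprinciple} gives $h\leq v_k + m/4\leq\varphi_{k+1}+m/4\leq u^*+m/4$ throughout $Q^k$, contradicting the existence of a point of $Q^k$ where $h-u^*>m/2$ once $Q^k$ is chosen to meet that set. Hence $m\leq 0$, i.e. $h\leq u^*$ in $Q$.

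I expect the main obstacle to be exactly the choice of the approximating cylinder $Q^k\in\mathcal F$ and the quantitative control of $u^* - \varphi_k$ near the maximum point: one must exploit both the lower semicontinuity of $u^*$ (to keep $u^*$ from dropping too much on a small cylinder around a near-maximal point of $h-u^*$) and the monotone convergence $\varphi_k\uparrow u^*$ (to then replace $u^*$ by $\varphi_k$ on the parabolic boundary of that small cylinder, at the cost of an arbitrarily small error). Everything else—upper semicontinuity of $h-u^*$ and attainment of its max in the interior, the identity $\varphi_{k+1}\geq v_k$ on $Q^k$, and the comparison between the weak solutions $h$ and $v_k$—is immediate from the cited results. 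Once these pieces are assembled, properties (i), (ii), (iii) are all verified and $u^*$ is $\A$-superparabolic.
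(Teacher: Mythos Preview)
Your handling of properties (i) and (ii) is correct and matches the paper. The genuine gap is in property (iii).

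You want a cylinder $Q^k\in\mathcal F$ that (a) meets $\{h-u^*>m/2\}$ and (b) satisfies $h\le\varphi_k+m/4$ on $\partial_pQ^k$. But a \emph{small} $Q^k$ around a near-maximal point $z_0$ of $h-u^*$ can never satisfy (b). Indeed, at $z_0$ one has $h(z_0)-\varphi_k(z_0)\ge h(z_0)-u^*(z_0)=m$ for every $k$, since $\varphi_k\le u^*$; by continuity of both $h$ and $\varphi_k$ the inequality $h-\varphi_k>m/2$ then persists on every sufficiently small cylinder around $z_0$, in particular on its parabolic boundary. Neither the lower semicontinuity of $u^*$ nor the monotone convergence $\varphi_k\uparrow u^*$ helps: lsc only bounds $u^*$ from below, and $\varphi_k\uparrow u^*$ can bring $\varphi_k(z_0)$ up to at most $u^*(z_0)=h(z_0)-m$, never to $h(z_0)-m/4$. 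So the step you correctly flag as ``delicate bookkeeping'' is in fact impossible with this localization. (Two smaller issues: every $Q^k\in\mathcal F$ has the form $B_r\times(\tau,T)$ and hence cannot be contained in $Q\Subset\Omega_T$---one must intersect with $\{t<t_2\}$; and once a cylinder in $\mathcal F$ is fixed so is its index, so ``choosing $k$ large after fixing a small $Q^k$'' needs separate justification.)

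The paper's argument uses the opposite geometry. It exploits the hypothesis $h\le u^*$ on $\partial_pQ$---the only place it is given---to show that the open sets $E_k:=\overline Q^p\cap\{\varphi_k>h-\varepsilon\}$ cover $\partial_pQ$; by monotonicity and compactness a single $E_{k_0}$ already covers. One then picks a \emph{large} cylinder $Q^{k_1}$ (with $k_1\ge k_0$) squeezed between $\partial_pQ$ and $Q\setminus E_{k_0}$, so that $\partial_pQ^{k_1}\cap\{t<t_2\}\subset E_{k_0}$ while $Q\setminus E_{k_0}\subset Q^{k_1}\cap\{t<t_2\}$. On that parabolic boundary $h<\varphi_{k_0}+\varepsilon\le v_{k_1}+\varepsilon$, and comparison with the weak solution $v_{k_1}$ propagates $h\le u^*+\varepsilon$ throughout $Q$. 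The essential point your attempt misses is that the comparison cylinder must have its parabolic boundary where $h\le u^*$ is \emph{known}, namely near $\partial_pQ$, not where $h-u^*$ is largest.
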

\begin{proof}
By Proposition~\ref{properties} $u^*$ is lower semicontinuous and 
\[
 |u^*|=\lim_{k\to\infty}|\varphi_k|\leq\sup_{\Omega_T}|\psi|
\]
in $\Omega_T$. Thus, it is enough to show that $u^*$ satisfies the comparison principle in every cylinder $Q\Subset\Omega_T$. 

To this end, fix a cylinder $Q=K\times(t_1,t_2)\Subset\Omega_T$ and let $h\in C^0(\overline Q^p)$ be 
a weak solution in $Q$ such that $h\leq u^*$ on $\partial_pQ$. If we can show that $h\leq u^*$ in $Q$, we are done. Fix $\varepsilon>0$ 
and set for each $k\in\N$ 
\[
 E_k := \overline Q^p\cap\{\varphi_k>h-\varepsilon\}.
\]
By the continuity of $\varphi_k$ and $h$ the sets $E_k$ are open with respect to the relative topology. Since  $u^*=\lim_{k\to\infty}\varphi_k$, for any point $z=(x,t)\in\partial_pQ$ we find an integer $k_z\geq 1$ such that 
\[
\varphi_{k_z}(z)>u^*(z)-\varepsilon\geq h(z)-\varepsilon,
\]
implying that $z\in E_{k_z}$. Therefore, the sets $E_k$ form an open cover for $\partial_pQ$, and since $\partial_pQ$ is compact and the 
sequence $(\varphi_k)_{k=0}^{\infty}$ nondecreasing, there exists an integer $k_0\geq 1$ such that $\partial_pQ\subset E_{k_0}$. This, 
together with the fact that the set $E_{k_0}$ is open, implies that there exists $k_1\geq k_0$ such that the cylinder 
$Q^{k_1}\in\mathcal{F}$ satisfies 
\[
 \partial_pQ^{k_1}\cap\{t<t_2\}\subset E_{k_0}\qquad\textrm{and}\qquad Q\setminus E_{k_0}\subset Q^{k_1}\cap\{t<t_2\}.
\]

Now, since $v_{k_1}=\varphi_{k_1}$ on $\partial_pQ^{k_1}$, we have 
\[
 h\leq\varphi_{k_0}+\varepsilon\leq\varphi_{k_1}+\varepsilon=v_{k_1}+\varepsilon
\]
on $\partial_pQ^{k_1}\cap\{t<t_2\}$. Moreover, both $h$ and $v_{k_1}+\varepsilon$ are weak solutions in $Q^{k_1}\cap\{t<t_2\}$, and 
therefore, 
\[
 h\leq v_{k_1}+\varepsilon\leq\varphi_{k_1+1}+\varepsilon\leq u^*+\varepsilon
\]
in $Q^{k_1}\cap\{t<t_2\}$ by the comparison principle, Lemma~\ref{comparisonprinciple}. Hence, $h\leq u^*+\varepsilon$ also in 
$Q\setminus E_{k_0}$, and since 
\[
 h\leq\varphi_{k_0}+\varepsilon\leq u^*+\varepsilon
\]
trivially in $E_{k_0}$, we obtain the result by letting $\varepsilon$ tend to zero. 
\end{proof}

The next lemma shows that Construction~\ref{construction} is stable. 

\begin{Lemma}\label{order}
 The limit $u^*$ of Construction~\ref{construction} is independent of the order of the cylinders $Q^k$. 
\end{Lemma}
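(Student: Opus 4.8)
The plan is to show that any two admissible enumerations of the family $\mathcal{F}$ give rise to the same limit function, from which the statement follows. So let $\{Q^k\}_{k\in\N}$ and $\{\tilde Q^k\}_{k\in\N}$ be two enumerations of $\mathcal{F}$ (each a reindexing of the other), let $(\varphi_k)_{k\in\N_0}$ and $(\tilde\varphi_k)_{k\in\N_0}$ be the corresponding sequences produced by Construction~\ref{construction}, with $v_k$ and $\tilde v_k$ the associated weak solutions in $Q^k$ and $\tilde Q^k$, and write $u^*=\lim_k\varphi_k$ and $\tilde u^*=\lim_k\tilde\varphi_k$ (both limits exist by Proposition~\ref{properties}(iv)). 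Since the two enumerations play symmetric roles, it suffices to prove $u^*\le\tilde u^*$ in $\Omega_T$.

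First I would prove, by induction on $k\in\N_0$, that $\varphi_k\le\tilde u^*$ in $\Omega_T$. The base case $k=0$ is $\varphi_0=\psi\le\tilde u^*$, which holds by Proposition~\ref{properties}(iv) for the second construction. For the inductive step, assume $\varphi_k\le\tilde u^*$ in $\Omega_T$. Outside $Q^k$ we have $v_k=\varphi_k\le\tilde u^*$, so it is enough to verify $v_k\le\tilde u^*$ in $Q^k$. Now $v_k\in C^0(\overline{Q^k}^p)$ is a weak solution in $Q^k$ and coincides with $\varphi_k$ on $\partial_pQ^k$, where by the induction hypothesis $v_k=\varphi_k\le\tilde u^*$. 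Since $\tilde u^*$ is $\A$-superparabolic by Lemma~\ref{A-super} (whose proof is insensitive to the chosen enumeration), the comparison principle yields $v_k\le\tilde u^*$ in $Q^k$, and hence $\varphi_{k+1}=\max\{\varphi_k,v_k\}\le\tilde u^*$ in $\Omega_T$. Letting $k\to\infty$ gives $u^*\le\tilde u^*$; exchanging the enumerations gives the reverse inequality, so $u^*=\tilde u^*$, and since the second enumeration was arbitrary, $u^*$ is independent of the ordering of the cylinders.

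I expect the only genuine obstacle to be the comparison step $v_k\le\tilde u^*$ in $Q^k$, since the cylinders of $\mathcal{F}$ reach up to the final time $T$ and are therefore not compactly contained in $\Omega_T$, so the comparison principle built into the definition of $\A$-superparabolicity cannot be invoked literally. I would handle this exactly as in the proof of Lemma~\ref{A-super}: fix $\varepsilon>0$ and a time level $\tau<T$; the relatively open sets $E_m:=\overline{Q^k}^p\cap\{\tilde\varphi_m>v_k-\varepsilon\}$ cover the (compact) parabolic boundary of the truncated cylinder $Q^k\cap\{t<\tau\}$, because there $v_k=\varphi_k\le\tilde u^*=\lim_m\tilde\varphi_m$ and $(\tilde\varphi_m)$ is nondecreasing, so compactness and monotonicity yield a single index $m_0$ with $\partial_p(Q^k\cap\{t<\tau\})\subset E_{m_0}$. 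Using the density of $\mathcal{F}$, I would then choose a cylinder $\tilde Q^{m_1}$ of the second enumeration with $m_1\ge m_0$ so that $\tilde Q^{m_1}\cap\{t<\tau\}$ is squeezed between $(Q^k\cap\{t<\tau\})\setminus E_{m_0}$ and $Q^k\cap\{t<\tau\}$, with $\partial_p\tilde Q^{m_1}\cap\{t<\tau\}\subset E_{m_0}$; then $v_k$ and $\tilde v_{m_1}+\varepsilon$ are weak solutions in $\tilde Q^{m_1}\cap\{t<\tau\}$ with $v_k\le\tilde v_{m_1}+\varepsilon$ on its parabolic boundary, so the continuous comparison principle (Proposition~\ref{comparisonprinciple}, which requires only containment in $\Omega_T$) gives $v_k\le\tilde v_{m_1}+\varepsilon\le\tilde\varphi_{m_1+1}+\varepsilon\le\tilde u^*+\varepsilon$ there, while on $E_{m_0}$ one has $v_k<\tilde\varphi_{m_0}+\varepsilon\le\tilde u^*+\varepsilon$ directly. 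Thus $v_k\le\tilde u^*+\varepsilon$ in $Q^k\cap\{t<\tau\}$, and letting $\tau\uparrow T$ and then $\varepsilon\downarrow 0$ finishes the step. The remaining bookkeeping (boundary values along $\partial_p\Omega_T$, and the compactness of the truncated boundaries) is identical to that in Construction~\ref{construction} and Lemma~\ref{A-super}.
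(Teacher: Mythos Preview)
Your first paragraph is exactly the paper's proof: the same induction $\varphi_k\le\tilde u^*$, the same appeal to Lemma~\ref{A-super} for the $\A$-superparabolicity of $\tilde u^*$, and the same comparison step $v_k\le\tilde u^*$ in $Q^k$. Your second paragraph is unnecessary: by the paper's explicit parabolic convention for $\Subset$ (stated in the notation subsection, where $\mathcal K\Subset\Omega_T$ means $\mathcal D\Subset\Omega$ and $0<t_1<t_2\leq T$), the cylinders $Q^k=B_{r_k}(x_k)\times(\tau_k,T)\in\mathcal F$ \emph{are} compactly contained in $\Omega_T$, so the comparison built into the definition of $\A$-superparabolicity applies directly without any truncation in time. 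Your workaround is correct but redundant.
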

\begin{proof}
Construct functions $\widetilde\varphi_k$ and $\widetilde v_k$ as in Construction~\ref{construction} with the cylinders $Q^k$ taken 
in a different order than in the construction of $u^*$. Denote $\widetilde u^*:=\lim_{k\to\infty}\widetilde\varphi_k$. Clearly we have 
$\widetilde u^*\geq \varphi_0=\psi$ in $\Omega_T$. Assume then that $\widetilde u^*\geq \varphi_k$ in $\Omega_T$ for some $k\in\N_0$. Since 
$v_k$ is a weak solution in $Q^k$ with $v_k=\varphi_k$ on $\partial_pQ^k$ and $\widetilde u^*$ is $\A$-superparabolic by 
Lemma~\ref{A-super}, we have $v_k\leq \widetilde u^*$ in $Q^k$. Thus, $\varphi_{k+1}=\max\{\varphi_k,v_k\}\leq \widetilde u^*$ in $\Omega_T$, and by 
induction we obtain $u^*\leq\widetilde u^*$ in $\Omega_T$. Interchanging the roles of $u^*$ and $\widetilde u^*$ completes the proof. 
\end{proof}

This leads to the following comparison result. 

\begin{Lemma}\label{comparisonoflimits}
 Let $\psi_1$ and $\psi_2$ be continuous obstacles such that $\psi_1\leq\psi_2$ in $\Omega_T$. Then the corresponding limits $u^*_1$ and 
 $u^*_2$ of Construction~\ref{construction} satisfy $u^*_1\leq u^*_2$ in $\Omega_T$.
\end{Lemma}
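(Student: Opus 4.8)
The plan is to mimic the induction used in Lemma~\ref{order}, replacing the abstract ``different order'' argument with the monotonicity of the obstacles. Fix the common enumeration $(Q^k)$ of $\mathcal{F}$ used in Construction~\ref{construction} for both obstacles, and let $(\varphi^{(1)}_k)$, $(\varphi^{(2)}_k)$ and $(v^{(1)}_k)$, $(v^{(2)}_k)$ denote the corresponding sequences. The claim $u_1^*\leq u_2^*$ in $\Omega_T$ will follow once we show $\varphi^{(1)}_k\leq u_2^*$ in $\Omega_T$ for every $k\in\N_0$, since then $u_1^*=\lim_k\varphi^{(1)}_k\leq u_2^*$.

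First I would set up the base case: $\varphi^{(1)}_0=\psi_1\leq\psi_2\leq u_2^*$ in $\Omega_T$, the last inequality by Proposition~\ref{properties}(iv). For the inductive step, assume $\varphi^{(1)}_k\leq u_2^*$ in $\Omega_T$. The function $v^{(1)}_k$ is the weak solution in $Q^k$ with boundary values $\varphi^{(1)}_k$ on $\partial_pQ^k$, and is continuous on $\overline{Q^k}^p$ by Theorem~\ref{existenceofsolution}. Since $u_2^*$ is $\A$-superparabolic by Lemma~\ref{A-super}, and since $v^{(1)}_k=\varphi^{(1)}_k\leq u_2^*$ on $\partial_pQ^k$, the comparison property in the definition of $\A$-superparabolicity yields $v^{(1)}_k\leq u_2^*$ in $Q^k$. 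Combining this with the inductive hypothesis, $\varphi^{(1)}_{k+1}=\max\{\varphi^{(1)}_k,v^{(1)}_k\}\leq u_2^*$ in $\Omega_T$ (recall $v^{(1)}_k=\varphi^{(1)}_k$ outside $Q^k$). This closes the induction and completes the proof.

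The argument is essentially a one-paragraph application of the machinery already built, so there is no serious obstacle; the only point requiring a little care is the \emph{consistency of enumeration}. Lemma~\ref{order} guarantees that $u_1^*$ and $u_2^*$ do not depend on the chosen ordering of the cylinders in $\mathcal{F}$, so we are free to use the \emph{same} enumeration for both constructions, which is what makes the termwise comparison $\varphi^{(1)}_k\leq u_2^*$ meaningful and the induction clean. One should also note that the comparison is applied with $h=v^{(1)}_k$ playing the role of the weak solution $h\in C^0(\overline{Q^k}^p)$ in the definition of $\A$-superparabolic, and $Q^k\Subset\Omega_T$ as required there (cylinders in $\mathcal{F}$ are compactly contained in $\Omega_T$ by construction, or at least their closures meet $\partial_p\Omega_T$ only in the initial slice, which is exactly the situation handled in Lemma~\ref{A-super}).
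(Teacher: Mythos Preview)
Your argument is correct, but it differs from the paper's proof in a small yet notable way. The paper proceeds by a termwise comparison: after invoking Lemma~\ref{order} to use a common enumeration, it shows by induction that $\varphi_k^1\leq\varphi_k^2$ in $\Omega_T$ for every $k$, applying the comparison principle (Proposition~\ref{comparisonprinciple}) to the \emph{two weak solutions} $v_k^1$ and $v_k^2$ in $Q^k$, and then passes to the limit. You instead compare each $\varphi_k^{(1)}$ directly to the \emph{limit} $u_2^*$, using the $\A$-superparabolicity of $u_2^*$ established in Lemma~\ref{A-super}; this is exactly the pattern of the proof of Lemma~\ref{order} itself. Your route has the pleasant feature that the common enumeration is not really needed (you only use the sequence for $\psi_1$, and $u_2^*$ is order-independent), while the paper's route is slightly more elementary in that it relies only on the comparison principle for continuous weak solutions and not on the $\A$-superparabolic machinery. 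Either approach works in one short paragraph; your parenthetical about the cylinders $Q^k$ and $\partial_p\Omega_T$ is a bit garbled (the $Q^k$ extend to $t=T$, not to the initial slice), but under the paper's parabolic convention $Q^k\Subset\Omega_T$ this causes no trouble, and indeed the paper applies the same comparison in the proof of Lemma~\ref{order} without further comment.
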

\begin{proof}
By Lemma~\ref{order} we may take the cylinders $Q^k$ in the same order in the construction of both $u_1^*$ and $u_2^*$. Let 
$\varphi_k^i$ and $v_k^i$, $i\in\{1,2\}$, $k\in\N_0$, generate $u^*_1$ and $u^*_2$. By assumption $\varphi_0^1\leq\varphi_0^2$ in 
$\Omega_T$. Suppose then that $\varphi_k^1\leq\varphi_k^2$ in $\Omega_T$ for some $k\in\N_0$. Then we have 
\[
 v_k^1=\varphi_k^1\leq\varphi_k^2=v_k^2
\]
on $\partial_pQ^k$, and thus the comparison principle, Lemma~\ref{comparisonprinciple}, yields $v_k^1\leq v_k^2$ in $Q^k$. This implies 
$\varphi_{k+1}^1\leq\varphi_{k+1}^2$ in $\Omega_T$, and hence, $\varphi_k^1\leq\varphi_k^2$ in $\Omega_T$ for every $k\in\N_0$. 
Taking the limit $k\to\infty$ on both sides completes the proof. 
\end{proof}

We now have the necessary tools to prove the continuity of our candidate. 

\begin{Lemma}\label{continuity}
 The limit $u^*$ of Construction~\ref{construction} is continuous.
\end{Lemma}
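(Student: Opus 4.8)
The plan is to show that $u^*$ is both lower and upper semicontinuous; lower semicontinuity is already established in Proposition~\ref{properties}(vi), so the task is upper semicontinuity. The strategy is to exploit the density of the family $\mathcal{F}$ together with the uniform gradient bound in Theorem~\ref{aprioritheorem}. Since $\psi$ is continuous on the compact set $\overline\Omega_T^p$, it is uniformly continuous, and for each $\eps>0$ we may find a continuous obstacle $\psi_\eps \geq \psi$ with $\psi_\eps \leq \psi + \eps$ that is, say, smooth or at least Lipschitz (e.g.\ by a sup-convolution or a mollification from above). By Lemma~\ref{comparisonoflimits}, the corresponding limit $u_\eps^*$ of Construction~\ref{construction} satisfies $u^* \leq u_\eps^* \leq u^* + \eps$ once we also note $\psi \leq \psi_\eps$ gives $u^* \le u^*_\eps$ and the reverse bound follows from $\psi_\eps \le \psi + \eps$ together with the fact that $u^* + \eps$ is itself $\A$-superparabolic lying above $\psi_\eps$, hence dominates $u_\eps^*$ by the argument of Proposition~\ref{properties}(v)/Lemma~\ref{A-super}. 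So it suffices to prove continuity of $u_\eps^*$ for a more regular obstacle, and then pass to the uniform limit.

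Next I would fix a point $z_0=(x_0,t_0)\in\Omega_T$ and a small cylinder $Q_0 \Subset \Omega_T$ around it. The key observation is that, by density of $\mathcal{F}$, there is a subsequence of indices $k$ for which $Q^k$ shrinks down to $z_0$ (more precisely, $Q^k \ni z_0$ with $\diam Q^k \to 0$), and for each such $k$ the function $\varphi_{k+1}$ agrees on $Q^k$ with $v_k$, which is a weak solution there with continuous boundary data $\varphi_k$. Applying Theorem~\ref{aprioritheorem} to $v_k$ on a slightly smaller cylinder, together with the Caccioppoli estimate (Lemma~\ref{caccioppoli}) to control $\mint G(|Dv_k|)$ by the uniform bound $\sup_{\Omega_T}|\psi|$ on $\|v_k\|_\infty$, gives a Lipschitz bound for $v_k$ on a definite fraction of $Q^k$ that depends only on the data and on $\|\psi\|_\infty$ but \emph{not} on $k$. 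Since $\varphi_k \leq u^* \leq \sup|\psi|$ and $\varphi_k \nearrow u^*$, and since $v_k \geq \varphi_k$ with $v_k$ Lipschitz near $z_0$, one sandwiches the oscillation of $u^*$ near $z_0$: $\osc_{Q'} u^* \leq \osc_{Q'} v_k + (\text{small})$, and the first term is controlled by the uniform Lipschitz constant times $\diam Q'$. Letting the relevant cylinders shrink yields $\limsup_{z\to z_0} u^*(z) \leq u^*(z_0)$, i.e.\ upper semicontinuity.

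The main obstacle I anticipate is making the sandwiching rigorous: one has to show that the "small" error terms — coming from the gap $u^* - \varphi_k$ and from the fact that $v_k$ controls $u^*$ only from below while we also need an upper bound on $u^*$ — can be made uniformly small as $k\to\infty$ along the chosen subsequence. This requires knowing that $\varphi_k$ is already close to its boundary data on $\partial_p Q^k$ in a way that propagates inward; the comparison principle (Lemma~\ref{comparisonprinciple}) and the stability Lemma~\ref{order} are the tools here. A cleaner route, following \cite{KKS}, is: given $\eps$, use continuity of $\psi_\eps$ to pick a cylinder $Q^k \in \mathcal{F}$ through $z_0$ so small that $\osc_{\partial_p Q^k}\varphi_k < \eps$ (possible since $\varphi_k \geq \psi_\eps$ are uniformly close to $\psi_\eps$ near $z_0$ after finitely many steps), then the weak solution $v_k$ has boundary oscillation $<\eps$, hence by the maximum principle $\osc_{Q^k} v_k < \eps$; since $\varphi_{k+1} = \max\{\varphi_k, v_k\}$ and $u^* \geq \varphi_{k+1}$ while $u^*$ is built from functions no larger than these near $z_0$, one gets $\osc$ of $u^*$ near $z_0$ bounded by a multiple of $\eps$. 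The gradient estimate Theorem~\ref{aprioritheorem} is then only needed to handle the case where $z_0$ lies on the lateral or initial part of some $\partial_p Q^k$, ensuring no loss of control there. Combining upper semicontinuity with Proposition~\ref{properties}(vi) gives continuity of $u^*$.
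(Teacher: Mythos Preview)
Your proposal has a genuine gap: the sandwiching step for upper semicontinuity does not go through. You want an inequality of the form $\osc_{Q'} u^* \leq \osc_{Q'} v_k + (\text{small})$, but knowing that $v_k$ is Lipschitz on a small cylinder gives no upper control on $u^*$; you only have $\varphi_k \leq u^*$ (a lower bound) and the trivial $u^* \leq \sup|\psi|$. The claim ``$v_k \geq \varphi_k$'' is also unjustified: $v_k$ solves the equation in $Q^k$ with boundary data $\varphi_k$, but $\varphi_k$ is in general \emph{not} a subsolution, so comparison does not apply. Likewise your ``cleaner route'' assumes $\osc_{\partial_p Q^k}\varphi_k<\eps$ for a small $Q^k$ and that later iterates stay controlled by this, neither of which you can verify; note also that every $Q^k\in\mathcal{F}$ has the form $B_{r_k}(x_k)\times(\tau_k,T)$, so these cylinders never shrink to a point and your phrase ``$\diam Q^k\to 0$'' is false in this construction.

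The idea you are missing is the one that drives the paper's proof: replace $\psi$ near $z_0$ not by a smoother function but by the weak \emph{solution} $h$ in a small cylinder $Q_r$ with boundary values $\psi$, i.e.\ set $\widetilde\psi=h$ in $Q_r$ and $\widetilde\psi=\psi$ outside. The maximum principle gives $|\psi-\widetilde\psi|\leq\osc_{\overline Q_r^p}\psi\leq\eps/4$, so by Lemma~\ref{comparisonoflimits} the corresponding limit $\widetilde u^*$ satisfies $|u^*-\widetilde u^*|\leq\eps/4$, exactly as you envisioned for $\psi_\eps$. The payoff of this particular modification, which mere smoothing cannot deliver, is that $\widetilde\varphi_0=\widetilde\psi$ is a weak solution (hence a subsolution) in $Q_r$; by Lemmas~\ref{minsuper} and~\ref{superglued} every $\widetilde\varphi_k$ is then a weak subsolution in $Q_r$. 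Now pick a subsequence $k_i$ so that $Q^{k_i-1}$ has parabolic boundary inside $Q_r\setminus Q_{r/2}$ (possible by density): comparison gives $\widetilde v_{k_i-1}\geq\widetilde\varphi_{k_i-1}$ there, hence $\widetilde\varphi_{k_i}=\widetilde v_{k_i-1}$ is a genuine weak \emph{solution} in $Q_{r/2}$. Theorem~\ref{aprioritheorem} plus Caccioppoli then yield a uniform Lipschitz bound on $Q_{r/8}$, and Theorem~\ref{superlimit} shows $\widetilde u^*$ is a weak solution, hence continuous, near $z_0$. This is precisely the mechanism that produces the missing upper bound on the oscillation.
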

\begin{proof}
Let $\varepsilon>0$ and $(x_0,t_0)\in\Omega_T$ be fixed. Since $\Omega_T$ is open and the obstacle $\psi$ continuous, there exists $r>0$ such that $Q_r:=B_r(x_0)\times\left(t_0-\frac12r^2,t_0+\frac12r^2\right)\Subset\Omega_T$ and 
\[
 \osc_{\overline Q_r^p}\psi\leq\frac{\varepsilon}{4}.
\]
Construct the modified obstacle 
\[
 \widetilde\psi := \left\{ \begin{array}{lll}
h & \textrm{in}\,\,Q_r\\
\psi & \textrm{in}\,\,\Omega_T\setminus Q_r
\end{array}\right.,
\]
where $h$ is a weak solution in $Q_r$ with $h=\psi$ on $\partial_pQ_r$. By Theorem~\ref{existenceofsolution} $h\in C^0(\overline Q_r^p)$, and 
thus, also $\widetilde\psi$ is continuous. Moreover, by the maximum principle, Corollary~\ref{maximumprinciple}, we have 
\[
 h-\psi\leq\sup_{\overline Q_r^p}h-\psi\leq\sup_{\partial_pQ_r}\psi-\inf_{\overline Q_r^p}\psi\leq\osc_{\overline Q_r^p}\psi
\]
in $\overline Q_r^p$. Similarly, $\psi-h\leq\osc_{\overline Q_r^p}\psi$ in $\overline Q_r^p$, and thus, we obtain 
\[
 |\psi-\widetilde\psi|\leq|\psi-h|\leq\osc_{\overline Q_r^p}\psi\leq\frac{\varepsilon}{4}
\]
in $\Omega_T$. 

Let $\widetilde u^*$ be the limit of Construction~\ref{construction} with the obstacle $\widetilde\psi$ generated by the functions 
$\widetilde\varphi_k$ and $\widetilde v_k$, $k\in\N_0$. Evidently adding a constant to the obstacle changes the corresponding 
limit by the same constant. Thus, since we have $\psi\leq\widetilde\psi+\frac{\varepsilon}{4}$ and 
$\psi\geq\widetilde\psi-\frac{\varepsilon}{4}$ in $\Omega_T$, an application of Lemma~\ref{comparisonoflimits} yields 
\[
 |u^*-\widetilde u^*|\leq\frac{\varepsilon}{4}
\]
in $\Omega_T$. 

Next we prove that the function $\widetilde u^*$ is continuous in $Q_{r/4}$. We start by showing that the function 
$\widetilde\varphi_k$ is a weak subsolution in $Q_r$ for every $k\in\N_0$. This is clearly true for $k=0$, since 
$\widetilde\varphi_0=\widetilde\psi$ is a weak solution in $Q_r$. Assume then that the claim holds for some $k\in\N_0$. Now, 
the function $\max\{\widetilde\varphi_k,\widetilde v_k\}$ is a weak subsolution in $Q_r\cap Q^k$ by Lemma~\ref{minsuper}. 
Since trivially $\max\{\widetilde\varphi_k,\widetilde v_k\}\geq\widetilde\varphi_k$, we deduce that 
\[
 \widetilde\varphi_{k+1}=\left\{ \begin{array}{ll}
\max\{\widetilde\varphi_k,\widetilde v_k\} & \textrm{in}\,\,Q_r\cap Q^k\\[2mm]
\widetilde\varphi_k & \textrm{in}\,\,Q_r\setminus Q^k
\end{array}\right.
\]
is a weak subsolution in $Q_r$ by Lemma~\ref{superglued}. 

Extract from the sequence $(\widetilde\varphi_k)_{k=0}^{\infty}$ a subsequence $(\widetilde\varphi_{k_i})_{i=0}^{\infty}$ such that 
$k_0\geq 1$ and 
\[
 \partial_p\left(Q^{k_i-1}\cap\left\{t<t_0+\frac12r^2\right\}\right)\subset Q_r\setminus Q_{r/2}
\]
for every $i\in\N_0$. This can be done, since the collection $\mathcal{F}$ is dense. Next, notice that $\widetilde\varphi_{k_i-1}$ 
is a weak subsolution in $Q^{k_i-1}\cap\left\{t<t_0+\frac12r^2\right\}$ for every $i\in\N_0$, and thus, the comparison principle, 
Lemma~\ref{comparisonprinciple}, yields $\widetilde v_{k_i-1}\geq\widetilde\varphi_{k_i-1}$ in 
$Q^{k_i-1}\cap\left\{t<t_0+\frac12r^2\right\}$. Therefore, $\widetilde\varphi_{k_i}=\widetilde v_{k_i-1}$ in 
$Q^{k_i-1}\cap\left\{t<t_0+\frac12r^2\right\}$, and since $Q_{r/2}\subset Q^{k_i-1}\cap\left\{t<t_0+\frac12r^2\right\}$ for 
every $i\in\N_0$, we see that $\widetilde\varphi_{k_i}$ is a weak solution in $Q_{r/2}$ for every $i\in\N_0$. 

Since $\widetilde\varphi_{k_i}$ is continuous in $\Omega_T$, we may apply Theorem~\ref{aprioritheorem} to obtain 
 \[
 ||D\widetilde\varphi_{k_i}||_{L^\infty(Q_{r/8})}\leq c\left(\mint_{Q_{r/4}}\big[G(|D\widetilde\varphi_{k_i}|)+1\big]\dx\dt\right)^{\max\left\{\frac12,\frac2{(n+2)g_0-2n}\right\}}.
 \]
Moreover, since $|\widetilde\varphi_{k_i}|\leq\sup_{\Omega_T}|\psi|=:M$ in $\Omega_T$ by the maximum principle and Proposition~\ref{properties}, the Caccioppoli inequality, Lemma~\ref{caccioppoli}, yields 
\[
\mint_{Q_{r/4}}G(|D\widetilde\varphi_{k_i}|)\dx\dt\leq c\mint_{Q_{r/2}}\left[G\left(\frac{|\widetilde\varphi_{k_i}|}{r}\right)+\frac{|\widetilde\varphi_{k_i}|^2}{r^2}\right]\dx\dt\leq c(g_0,g_1,\nu,L,M,r)
\]
for every $i\in\N_0$. Thus, we have a uniform bound for $||D\widetilde\varphi_{k_i}||_{L^\infty(Q_{r/8})}$ and since $\widetilde\varphi_{k_i}$ converges pointwise to $\widetilde u^*$, by Lemma~\ref{gradientconvergence} we obtain 
\[
||D\widetilde u^*||_{L^\infty(Q_{r/8})}\leq c(n,g_0,g_1,\nu,L,M,r). 
\]
By applying Theorem~\ref{superlimit} to both $(\widetilde\varphi_{k_i})_{i=0}^{\infty}$ and $(-\widetilde\varphi_{k_i})_{i=0}^{\infty}$ we see that $\widetilde u^*$ is also a weak solution in $Q_{r/2}$. Now the continuity of $\widetilde u^*$ in a neighborhood of $(x_0,t_0)$ follows using a completely analogous proof to Proposition~4.2 in \cite{BL} together with a simple approximation argument.

Finally, it is easy to see that 
\[
 \sup u^* - \sup \widetilde u^* \leq \sup|u^*-\widetilde u^*|
\]
and thus we have 
\[
 \osc u^* - \osc \widetilde u^* = \sup u^* - \sup \widetilde u^* + \sup (-u^*) - \sup (-\widetilde u^*)\leq 2\sup|u^*-\widetilde u^*|.
\]
Since $\widetilde u^*$ is continuous in a neighborhood of $(x_0,t_0)$, we find $0<\delta<r/8$ such that 
\[
 \osc_{Q_{\delta}}\widetilde u^* < \frac{\varepsilon}{2}.
\]
Therefore, by writing 
\[
 \osc_{Q_{\delta}}u^*\leq\osc_{Q_{\delta}} \widetilde u^*+2\sup_{Q_{\delta}}|u^*-\widetilde u^*|<\varepsilon,
\]
we see that $u^*$ is continuous at $(x_0,t_0)$, as required. 
\end{proof}

Let us then conclude by showing that our candidate $u^*$ is a weak supersolution. We shall do this by constructing a sequence of weak supersolutions that converge to $u^*$ almost everywhere and then using Theorem~\ref{superlimit}. Observe that $u^*$ being the limit of Construction~\ref{construction} plays no special role in the proof, in fact, the result holds for any continuous $\A$-superparabolic function. 

Let $K_0\Subset\Omega_T$ be a cube, and let $\{K_k^j\}_{j=1}^{2^{nk}}$, denote the $k^{\textrm{th}}$ generation of 
dyadic subcubes of $K_0$. Set $Q_0:=K_0\times(0,T)$ and $Q_k^j:=K_k^j\times(0,T)$, and define for each $k\in\N$ the function 
$u_k: Q_0\to \R$ such that $u_k$ is a weak solution in $Q_k^j$ and $u_k=u^*$ on $\partial_pQ_k^j$ for every $j\in\{1,\ldots,2^{nk}\}$. 

\begin{Lemma}\label{supersolution1}
 The function $u_k$ is a weak supersolution in $Q_0$ for every $k\in\N$.
\end{Lemma}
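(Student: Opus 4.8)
The plan is to show that $u_k$ is $\A$-superparabolic together with the right integrability, and then to exhibit it as a pointwise limit of a decreasing sequence of weak supersolutions, so that Theorem~\ref{superlimit} applies. First I would record the structural features of $u_k$ inherited directly from the construction: on each open cylinder $Q_k^j$ it equals a bounded weak solution with continuous boundary data $u^*|_{\partial_p Q_k^j}$ (note $u^*$ is continuous by Lemma~\ref{continuity} and bounded by $\sup_{\Omega_T}|\psi|$, so Theorem~\ref{existenceofsolution} applies), hence $u_k\in C^0(\overline{Q_k^j}^p)$ with $|u_k|\le\sup_{\Omega_T}|\psi|$; and on the (measure-zero) union of the faces $\partial K_k^j\times(0,T)$ we have $u_k=u^*$. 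By the comparison principle, Proposition~\ref{comparisonprinciple}, applied on each $Q_k^j$ with subsolution $u_k$ itself... more usefully, since $u^*$ is $\A$-superparabolic (this is the content of Lemma~\ref{A-super}, and the remark before the statement says the argument works for any continuous $\A$-superparabolic function), the comparison property of $u^*$ on $Q_k^j$ gives $u_k\le u^*$ in each $Q_k^j$, and since $u_k$ solves the equation while $u^*$ satisfies the comparison principle, one also gets that $u_k$ agrees with $u^*$ on the skeleton. In particular $u_k$ is continuous across the faces from the interior via the boundary values, so $u_k\in C^0(Q_0)$, and $u_k\le u^*$ everywhere in $Q_0$.

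Next I would verify that $u_k$ is $\A$-superparabolic in $Q_0$. Lower semicontinuity and finiteness on a dense set are immediate from the continuity and boundedness just established. For the comparison principle: let $Q\Subset Q_0$ and let $h\in C^0(\overline Q^p)$ be a weak solution with $h\le u_k$ on $\partial_p Q$. I want $h\le u_k$ in $Q$. The obstruction is that $u_k$ is only a weak solution off the skeleton $\bigcup_j\partial K_k^j\times(0,T)$, so one cannot directly apply Proposition~\ref{comparisonprinciple} on all of $Q$. The standard device, following \cite{KKP} and the pasting lemma, Lemma~\ref{superglued}: fix $\eps>0$, and use that $u^*$ is $\A$-superparabolic to compare $h$ with $u^*$ — but this only gives $h\le u^*$, not $h\le u_k$. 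So instead I would argue locally. On $Q\cap Q_k^j$, both $h$ and $u_k$ are weak solutions; since $u_k$ equals $u^*\ge$ (the relevant boundary trace) on the part of $\partial_p(Q\cap Q_k^j)$ lying on $\partial K_k^j$, and $h\le u_k$ on the part coming from $\partial_p Q$, I get $h\le u_k$ on $\partial_p(Q\cap Q_k^j)$ provided I also know $h\le u_k=u^*$ on $\partial K_k^j\times(0,T)\cap Q$. But that last inequality is exactly $h\le u^*$ on that piece, which follows from $u^*$ being $\A$-superparabolic once we know $h\le u^*$ on $\partial_p Q$ — and $h\le u_k\le u^*$ on $\partial_p Q$. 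So the chain is: $h\le u_k\le u^*$ on $\partial_p Q$ $\Rightarrow$ $h\le u^*$ in $Q$ (by $\A$-superparabolicity of $u^*$) $\Rightarrow$ $h\le u^*=u_k$ on $\partial K_k^j\times(0,T)\cap\overline Q$ $\Rightarrow$ by Proposition~\ref{comparisonprinciple} on each $Q\cap Q_k^j$, $h\le u_k$ there; union over $j$ gives $h\le u_k$ in $Q$.

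Finally, to convert $\A$-superparabolicity plus boundedness into "weak supersolution", I would use exactly the same machinery already built in the paper. The cleanest route is to show $u_k$ equals the limit of Construction~\ref{construction}-type functions, or more directly: by Theorem~\ref{superlsc}, weak supersolutions are precisely the lower-semicontinuous-regularized ones, and the infimum construction of Theorem~\ref{bddexistence} shows that a continuous $\A$-superparabolic function bounded below which dominates its own $\essliminf$-regularization and admits an approximating decreasing sequence of weak supersolutions is itself a weak supersolution by Theorem~\ref{superlimit}. Concretely, I would solve Dirichlet problems (Theorem~\ref{existenceofsolution}) in an exhausting family of subcylinders with boundary data $u_k$, take minima to get a decreasing sequence, observe it stays above... no — the simplest honest argument: cover $Q_0$ by the dense family $\mathcal F$, run Construction~\ref{construction} with obstacle $u_k$ (which is legitimate since $u_k$ is continuous), note that because $u_k$ is already $\A$-superparabolic each solved-out $v_m\le u_k$, so $\max\{\varphi_m,v_m\}=\varphi_m$ stabilizes and the limit is $u_k$ itself; then Lemma~\ref{A-super}'s proof combined with the weak-supersolution argument of Lemma~\ref{continuity} (apply Theorem~\ref{aprioritheorem} to get local gradient bounds on the pieces where $\varphi_{k_i}$ is a weak solution, Lemma~\ref{gradientconvergence}, then Theorem~\ref{superlimit}) yields that $u_k$ is a weak supersolution. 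The main obstacle is the bookkeeping in the comparison-principle step above — making sure the skeleton, a set of measure zero but topologically delicate, causes no trouble, which is handled precisely by the interplay between the pointwise domination $u_k\le u^*$ and the $\A$-superparabolicity of $u^*$.
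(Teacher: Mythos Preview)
Your first half --- showing $u_k$ is $\A$-superparabolic via the chain $h\le u_k\le u^*$ on $\partial_pQ$, then $h\le u^*$ in $Q$ by Lemma~\ref{A-super}, hence $h\le u^*=u_k$ on the skeleton, hence $h\le u_k$ on each $Q\cap Q_k^j$ by Proposition~\ref{comparisonprinciple} --- is exactly the paper's argument.

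The gap is in the passage from $\A$-superparabolic to weak supersolution. Running Construction~\ref{construction} with obstacle $u_k$ produces, as you yourself note, the constant sequence $\varphi_m\equiv u_k$; no invocation of Lemmas~\ref{A-super} or~\ref{continuity} can then extract new information, and appealing to the machinery of Lemma~\ref{supersolution2} would be circular since that lemma uses the present one. The actual content the paper supplies is a face-by-face removal argument: it shows that if $u_k$ is a weak supersolution on both half-cylinders $\{x_i<r\}$ and $\{x_i>r\}$ of some $\widetilde Q\subset Q_0$, then it is one on all of $\widetilde Q$. For this, solve the Dirichlet problem with boundary data $u_k$ on the thin strip $U_m=\{|x_i-r|<1/m\}$ to get $h_m$; $\A$-superparabolicity gives $h_m\le u_k$, so the pasting lemma (Lemma~\ref{superglued}) makes $v_m$ (equal to $h_m$ in $U_m$, $u_k$ outside) a weak supersolution on each half, and since $v_m$ is a weak solution on the overlap $U_m$, locality gives $v_m$ a weak supersolution on $\widetilde Q$. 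Then $v_m\to u_k$ pointwise with a uniform bound, and Theorem~\ref{superlimit} finishes. Iterating over the finitely many faces of the dyadic cubes yields the lemma. This strip-and-paste step is the missing idea in your proposal.
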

\begin{proof}
Let us first show that $u_k$ is $\A$-superparabolic in $Q_0$. Clearly $u_k$ is continuous in $\overline Q_0^p$, since $u^*$ is continuous by 
Lemma~\ref{continuity} and $u_k$ is continuous up to the boundary in each $Q_k^j, j\in\{1,\ldots,2^{nk}\}$, by 
Theorem~\ref{existenceofsolution}. Moreover, as a continuous function $u_k$ is bounded in the compact set $\overline Q_0^p$. Thus, we only 
need to check that $u_k$ satisfies the comparison principle in each cylinder.

Fix a cylinder $Q\Subset Q_0$ and let $h\in C^0(\overline Q^p)$ be a weak solution in $Q$ such that $h\leq u_k$ on $\partial_pQ$. Since $u^*$ is $\A$-superparabolic by Lemma~\ref{A-super} and $u_k$ is a weak solution in $Q_k^j$ with $u_k=u^*$ on $\partial_pQ_k^j$ for every $j\in\{1,\ldots,2^{nk}\}$, we have $u_k\leq u^*$ in $\overline Q_0^p$. Now $h\leq u_k\leq u^*$ on $\partial_pQ$, and therefore another application of the comparison principle yields $h\leq u^*$ in $Q$. This, together with the fact that $u_k=u^*$ on $\partial_pQ_k^j$ for a fixed $j\in\{1,\ldots,2^{nk}\}$, implies that $h\leq u_k$ on $Q\cap\partial_pQ_k^j$. Thus, $h\leq u_k$ also on $\partial_p(Q\cap Q_k^j)$, and since $h$ and $u_k$ are both weak solutions in $Q\cap Q_k^j$, we obtain $h\leq u_k$ in $Q\cap Q_k^j$ by Lemma~\ref{comparisonprinciple}. Since this holds for every $j\in\{1,\ldots,2^{nk}\}$, we have $h\leq u_k$ in the whole $Q$.

Next we prove that $u_k$ is a weak supersolution in $Q_0$. Let $\widetilde Q\subset Q_0$ be a cylinder, and fix 
$r\in\R$ and $i\in\{1,\ldots,n\}$. We show that if $u_k$ is a weak supersolution in both $Q_1:=\{(x,t)\in \widetilde Q: x_i<r\}$ and 
$Q_2:=\{(x,t)\in \widetilde Q: x_i>r\}$, then it is a weak supersolution in $\widetilde Q$. Using this repeatedly will then give the desired result. 

Define for $m\in\N$ the set $U_m:=\left\{(x,t)\in \widetilde Q: r-\frac{1}{m}<x_i<r+\frac{1}{m}\right\}$ and the function 
\[
 v_m = \left\{ \begin{array}{ll}
h_m & \textrm{in}\,\, U_m\\
u_k & \textrm{in}\,\, \widetilde Q\setminus U_m\\
\end{array}\right.,
\]
where $h_m$ is a weak solution in $U_m$ with $h_m=u_k$ on $\partial_pU_m$. Since $u_k$ is $\A$-superparabolic, the 
comparison principle yields $h_m\leq u_k$ in $U_m$. This, together with the fact that $u_k\in C^0(\overline Q_1^p)$ and 
$h_m\in C^0(\overline U_m^p)$ are weak supersolutions in $Q_1$ and $U_m$, respectively, allows us to apply 
Lemma~\ref{superglued} to deduce that $v_m$ is a weak supersolution in $Q_1$. Similar reasoning shows that $v_m$ is a 
weak supersolution also in $Q_2$. Since $v_m$ is a weak (super)solution in $U_m$ and being a 
weak supersolution is a local property, $v_m$ is a weak supersolution in the whole $\widetilde Q$. 

Let $M:=\sup_{\widetilde Q}|u_k|$. By the maximum principle, Corollary~\ref{maximumprinciple}, 
\[
 |h_m|\leq\sup_{\partial_pU_m}|h_m|=\sup_{\partial_pU_m}|u_k|\leq M
\]
in $U_m$, and thus $|v_m|\leq M$ in $\widetilde Q$. Moreover, by the continuity of $u_k$, $v_m$ clearly converges to $u_k$ pointwise in $\widetilde Q$ as $m\to\infty$. Therefore, by Theorem~\ref{superlimit} $u_k$ is a weak supersolution in $\widetilde Q$. This concludes the proof. 
\end{proof}

\begin{Lemma}\label{supersolution2}
 The limit $u^*$ of Construction~\ref{construction} is a weak supersolution in $\Omega_T$.
\end{Lemma}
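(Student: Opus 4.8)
The plan is to realize $u^*$ as an almost-everywhere limit of the weak supersolutions $u_k$ constructed before Lemma~\ref{supersolution1} and then invoke Theorem~\ref{superlimit}. Since being a weak supersolution is a local property — every $\eta\in C^\infty_c(\Omega_T)$ is supported in some $K_0\times(0,T)$ with $K_0\Subset\Omega$ — it is enough to fix one such cube $K_0$, form the auxiliary functions $u_k$ on $Q_0:=K_0\times(0,T)$ (with $Q_k^j=K_k^j\times(0,T)$ the columns over the $k$-th generation of dyadic subcubes of $K_0$), and show that $u^*$ is a weak supersolution in $Q_0$. By Lemma~\ref{supersolution1} each $u_k$ is a weak supersolution in $Q_0$; it is continuous on $\overline{Q_0}^p$ (Theorem~\ref{existenceofsolution} plus matching of the pieces on the grid walls), and by the maximum principle (Corollary~\ref{maximumprinciple}) together with the bound $|u^*|\le\sup_{\Omega_T}|\psi|$ from Proposition~\ref{properties} one gets $|u_k|\le\sup_{\Omega_T}|\psi|=:M$ for all $k$. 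Hence, once $u_k\to u^*$ a.e.\ in $Q_0$ is known, Theorem~\ref{superlimit} concludes the proof in $Q_0$, and then in $\Omega_T$ by locality.

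First I would record $u_k\le u^*$ and the monotonicity $u_k\le u_{k+1}$. For the former: in each column $Q_k^j$ the weak solution $u_k$ equals $u^*$ on $\partial_pQ_k^j$, so the $\A$-superparabolicity of $u^*$ (Lemma~\ref{A-super}) forces $u_k\le u^*$ there, while on the grid $u_k=u^*$. For the latter: each level-$(k{+}1)$ column $Q_{k+1}^{j'}$ lies inside a level-$k$ column, both $u_k$ and $u_{k+1}$ are weak solutions in it, and $u_k\le u^*=u_{k+1}$ on $\partial_pQ_{k+1}^{j'}$, so Proposition~\ref{comparisonprinciple} gives $u_k\le u_{k+1}$ there; on the grid $u_{k+1}=u^*\ge u_k$. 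Thus $u_k$ increases pointwise to some $\tilde u\le u^*$, and by Theorem~\ref{superlimit} $\tilde u$ is a weak supersolution in $Q_0$.

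The heart of the matter is the reverse inequality $\tilde u\ge u^*$, and this is where the continuity of $u^*$ (Lemma~\ref{continuity}) must be used: the monotone limit $\tilde u$ is trivially $\le u^*$, but nothing soft bounds it from below. Fix $(x_0,t_0)\in Q_0$ and $\eps>0$, and choose by continuity $\rho>0$ with $u^*>u^*(x_0,t_0)-\eps$ on $B_\rho(x_0)\times(t_0-\rho^2,t_0+\rho^2)$. For $k$ large the dyadic cube $K_k^j\ni x_0$ satisfies $K_k^j\subset B_\rho(x_0)$, so that on the cylinder $C_k:=K_k^j\times(0,t_0+\delta)$ (with $\delta>0$ small and fixed, $\delta<\rho^2$, $t_0+\delta<T$) the restriction of $u_k$ is still a weak solution whose lateral data on $\partial K_k^j\times(t_0-\rho^2,t_0+\delta)$ exceeds $u^*(x_0,t_0)-\eps$, while elsewhere on $\partial_pC_k$ it merely satisfies $u_k=u^*\ge-M$. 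Comparing $u_k$ with the weak solution $\underline w$ in $C_k$ whose continuous boundary data is pinched between $-M$ and $u^*|_{\partial_pC_k}$ and equals the constant $u^*(x_0,t_0)-\eps$ on $\partial K_k^j\times(t_0-\frac12\rho^2,t_0+\delta)$, Proposition~\ref{comparisonprinciple} gives $u_k\ge\underline w$ in $C_k$, in particular at the interior point $(x_0,t_0)$; on the other hand, since $C_k$ is spatially thin with width tending to $0$ and the relevant lateral data of $\underline w$ is the constant $u^*(x_0,t_0)-\eps$ near $(x_0,t_0)$, a barrier / thin-domain estimate — the analogue for our class of equations of the one used in \cite{KKS} — yields $\underline w(x_0,t_0)\to u^*(x_0,t_0)-\eps$ as $k\to\infty$. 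Hence $\tilde u(x_0,t_0)=\lim_k u_k(x_0,t_0)\ge u^*(x_0,t_0)-\eps$, and letting $\eps\to0$ gives $\tilde u(x_0,t_0)\ge u^*(x_0,t_0)$. Therefore $u^*=\tilde u$ is a weak supersolution in $Q_0$, and since $K_0\Subset\Omega$ was arbitrary, $u^*$ is a weak supersolution in $\Omega_T$.

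I expect the thin-column barrier estimate to be the main obstacle. Every column $Q_k^j$ spans the full time interval, so no interior gradient bound (Theorem~\ref{aprioritheorem}) survives the shrinking spatial width, and one has to build an honest comparison function showing that a weak solution in a thin column is driven to its (continuous) lateral boundary data near an interior point, exploiting the fast decay in time on the scale of the square of the spatial width; the remaining steps (monotonicity via comparison, the uniform $L^\infty$ bound, the passage through Theorem~\ref{superlimit}, and the localization) are routine.
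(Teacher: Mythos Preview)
Your overall architecture is right: localize, use the uniformly bounded supersolutions $u_k$, show $u_k\to u^*$ a.e., and apply Theorem~\ref{superlimit}. The upper bound $u_k\le u^*$ via $\A$-superparabolicity is exactly what the paper does, and your monotonicity observation $u_k\le u_{k+1}$ (not used in the paper) is correct and harmless.

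The gap is precisely where you flag it: the thin-column barrier. You do not construct it, and for this class of operators---which may be simultaneously degenerate and singular, with no intrinsic scaling fixed in advance---an explicit subsolution in a thin cylinder that forces the solution toward its lateral data at a prescribed interior point is genuinely delicate. Nothing in the paper supplies such a barrier, and the difficulties that already surface in Theorem~\ref{boundednesstheorem} (only qualitative bounds, the degenerate/singular/intermediate trichotomy) suggest that building one would be a separate project.

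The paper sidesteps this completely with a Poincar\'e-plus-Caccioppoli argument. Pick $\eta\in C^\infty$ with $|\eta-u^*|<\eps$ on a compact piece $Q\subset Q_0$ (possible since $u^*$ is continuous by Lemma~\ref{continuity}) and set $w_k:=(\eta-u_k-\eps)_+$. On the walls $\partial K_k^j$ one has $u_k=u^*>\eta-\eps$, so $w_k(\cdot,t)\in W^{1,G}_0(K_k^j)$ for a.e.\ $t$. Poincar\'e (Lemma~\ref{poincare}) on each cube, with $\diam(K_k^j)\le 2^{-k}$, gives
\[
\int_Q G(w_k)\dx\dt\le 2^{-g_0k}\int_Q G(|Dw_k|)\dx\dt,
\]
and a single Caccioppoli estimate on $Q_0$ (applied to the nonnegative subsolution $M-u_k$) bounds $\int_Q G(|Du_k|)$ uniformly in $k$. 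Hence $|Q\cap\{w_k>\delta\}|\le c(\delta)\,2^{-g_0k}\to0$, so $w_k\to0$ in measure, and along a subsequence $u_{k_i}\ge\eta-\eps>u^*-2\eps$ a.e.; combined with $u_k\le u^*$ this yields the a.e.\ convergence you need. This replaces your barrier step by two standard inequalities already available in the paper, and requires no new construction.
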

\begin{proof}
Let $(x,t)$ be any point in $\Omega_T$. Since $\Omega_T$ is open, we can always find a cube $K_0\Subset\Omega$ and $0<t_1<t_2<T$ such that 
$(x,t)\in \frac12 K_0\times(t_1,t_2)=:Q$, where $\frac12 K_0$ denotes the cube with the same center and half the side length as $K_0$. We may assume $\diam(K_0)\leq 1$ without loss of generality. By Lemma~\ref{supersolution1} $u_k$ is a weak supersolution in $Q_0=K_0\times(0,T)$ for every $k\in\N$, and since 
\[
 |u_k|\leq\sup_{Q^j_k}|u_k|\leq\sup_{\partial_pQ^j_k}|u^*|\leq\sup_{\Omega_T}|\psi|=:M
\]
in $Q^j_k$ for every $j\in\{1,\ldots,2^{nk}\}$ by the maximum principle, the sequence $(u_k)_{k=1}^{\infty}$ is 
uniformly bounded in $Q_0$. Therefore, if we manage to show that $(u_k)_{k=1}^{\infty}$ has a subsequence that converges to $u^*$ almost everywhere in $Q$, the result follows by Theorem~\ref{superlimit}. 

To this end, let $\varepsilon>0$. Since $u^*$ is continuous by Lemma~\ref{continuity}, there exists $\eta\in C^{\infty}(\Omega_T)$ such 
that $|\eta-u^*|<\varepsilon$ in $\overline Q^p$. By requiring $k\geq 2$ we may label the cubes $K_k^j$ such that 
\[
\bigcup_{j=1}^{2^{n(k-1)}}K_k^j=\frac12 K_0. 
\]
Set 
\[
 w_k := (\eta-u_k-\varepsilon)_+
\]
and observe that $w_k(\cdot,t)\in W^{1,G}_0(K_k^j)$ for almost every $t\in(t_1,t_2)$ and every $j\in\{1,\ldots,2^{n(k-1)}\}$. Thus, by applying the Poincar\'e's inequality, Lemma~\ref{poincare}, to the function $\diam(K_k^j)w_k$, we obtain 
\[
\begin{split}
 \int_{t_1}^{t_2}\int_{K_k^j}G(w_k)\dx\dt&\leq \int_{t_1}^{t_2}\int_{K_k^j}G\big(\diam(K_k^j)|Dw_k|\big)\dx\dt\\
 &\leq 2^{-g_0k}\int_{t_1}^{t_2}\int_{K_k^j}G(|Dw_k|)\dx\dt,
\end{split}
\]
since we assumed $\diam(K_0)\leq 1$. Therefore, 
\begin{equation}\label{B}
 \begin{split}
  \int_{Q}G(w_k)\dx\dt&=\sum_{j=1}^{2^{n(k-1)}}\int_{t_1}^{t_2}\int_{K_k^j}G(w_k)\dx\dt\\
  &\leq 2^{-g_0k}\sum_{j=1}^{2^{n(k-1)}}\int_{t_1}^{t_2}\int_{K_k^j}G(|Dw_k|)\dx\dt\\
  &=2^{-g_0k}\int_{Q}G(|Dw_k|)\dx\dt.
 \end{split}
\end{equation}

Let $\varphi\in C^{\infty}_c(Q_0)$ be such that $0\leq\varphi\leq 1$, $\varphi=1$ in $Q$, and 
\[
\left|\left|\partial_t\varphi\right|\right|_{L^{\infty}(Q_0)},||D\varphi||_{L^{\infty}(Q_0)}\leq C
\]
for some $C\geq 1$. By applying the Caccioppoli estimate, Lemma~\ref{caccioppoli}, to the nonnegative weak subsolution 
$M-u_k$ we conclude, similarly as in the proof of Lemma~\ref{gradientconvergence}, that 
\[
 \int_Q G(|Du_k|)\dx\dt \leq c\left(M^2C+M^{g_1}C^{g_1}\right)|Q_0|.
\]
Thus, combining this with \eqref{B} yields 
\[
 \begin{split}
  |Q\cap\{w_k>\delta\}|&=\int_{Q\cap\{w_k>\delta\}}G(\delta^{-1}\delta)\dx\dt\\
  &\leq\delta^{-g_1}\int_Q G(w_k)\dx\dt\leq 2^{-g_0k}\delta^{-g_1}\int_{Q}G(|Dw_k|)\dx\dt\\
  &\leq 2^{-g_0k}\left(\frac{2}{\delta}\right)^{g_1}\left(\int_{Q}G(|D\eta|)\dx\dt+\int_{Q}G(|Du_k|)\dx\dt\right)\\
  &\leq 2^{-g_0k}c\left(\delta,g_0,g_1,\nu,L,M,C,|Q_0|,\|D\eta\|_{L^{\infty}(Q_0)}\right)
 \end{split}
\]
for every $\delta\in(0,1)$, implying that $w_k\to 0$ in measure in $Q$ as $k\to\infty$. Therefore, we find a subsequence 
$(w_{k_i})_{i=1}^{\infty}$ converging to zero almost everywhere in $Q$ as $i\to\infty$. Now, together with the choice of $\eta$, this 
gives 
\[
 \lim_{i\to\infty}u_{k_i}\geq\eta-\varepsilon>u^*-2\varepsilon
\]
almost everywhere in $Q$. Since $u_k\leq u^*$ in $Q_0$ for every $k\in\N$, we obtain $\lim_{i\to\infty}u_{k_i}=u^*$ almost everywhere in $Q$ by letting $\varepsilon\to 0$. This finishes the proof. \qedhere

\end{proof}

\begin{Theorem}\label{obstaclecontinuity}
 Let $\psi$ be a continuous function in $\overline\Omega_T^p$. Then the solution to the obstacle problem with the obstacle~$\psi$ in $\Omega_T$ is continuous.
\end{Theorem}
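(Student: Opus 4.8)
The plan is to identify the function $u^*$ produced by Construction~\ref{construction} with the unique solution to the obstacle problem furnished by Theorem~\ref{bddexistence}, and then to read off continuity from Lemma~\ref{continuity}. Since $\psi\in C^0(\overline\Omega_T^p)\subset L^\infty(\Omega_T)$, Theorem~\ref{bddexistence} applies: the solution exists, is unique, and is characterized as the smallest $\essliminf$-regularized weak supersolution lying above $\psi$ almost everywhere. So it suffices to check that $u^*$ enjoys all three of these properties.

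First I would collect the facts already established in the preceding lemmas. We have $u^*\geq\psi$ in $\Omega_T$ by Proposition~\ref{properties}(iv); $u^*$ is a weak supersolution in $\Omega_T$ by Lemma~\ref{supersolution2}; and $u^*$ is continuous by Lemma~\ref{continuity}. The only point still needing a line of justification is that $u^*$ is $\essliminf$-regularized, i.e. $u^*=\widehat{u^*}$ everywhere. But if $u^*$ is continuous at $(x,t)$, then $\lim_{r\to 0}\essinf_{Q_r(x,t)\cap\Omega_T}u^*=u^*(x,t)$, so continuity forces $u^*=\widehat{u^*}$ pointwise. Hence $u^*$ is an $\essliminf$-regularized weak supersolution lying above $\psi$.

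Next I would invoke the minimality statement Proposition~\ref{properties}(v): every $\essliminf$-regularized weak supersolution $v$ with $v\geq\psi$ almost everywhere in $\Omega_T$ satisfies $v\geq u^*$ in $\Omega_T$. Combined with the previous paragraph, this shows that $u^*$ is the smallest such function, i.e. $u^*$ is a solution to the obstacle problem with obstacle $\psi$. By the uniqueness part of Theorem~\ref{bddexistence}, the solution coincides with $u^*$, which is continuous by Lemma~\ref{continuity}; this completes the proof.

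The proof of the theorem itself is therefore essentially bookkeeping: the substantive work lies in the preceding results, and the true obstacle is Lemma~\ref{continuity}, whose argument rests on the a priori Lipschitz bound of Theorem~\ref{aprioritheorem}, the gradient convergence Lemma~\ref{gradientconvergence}, and the order-independence and comparison Lemmas~\ref{order} and~\ref{comparisonoflimits}. Once these are in hand, assembling them into Theorem~\ref{obstaclecontinuity} requires no further estimates, only matching $u^*$ against the definition of a solution and appealing to uniqueness.
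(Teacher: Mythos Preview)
Your proposal is correct and follows essentially the same approach as the paper: identify $u^*$ as the unique solution from Theorem~\ref{bddexistence} by checking it is a weak supersolution (Lemma~\ref{supersolution2}), lies above $\psi$, and is minimal (Proposition~\ref{properties}), then invoke Lemma~\ref{continuity}. The only addition is your explicit verification that continuity implies $u^*=\widehat{u^*}$, which the paper leaves implicit.
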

\begin{proof}
By Lemma \ref{supersolution2} the limit $u^*$ of Construction~\ref{construction} is a weak supersolution in $\Omega_T$. Moreover, by Proposition~\ref{properties} $u^*\geq\psi$ in $\Omega_T$, and if also $v$ is an $\essliminf$-regularized weak supersolution with $v\geq\psi$ almost everywhere in $\Omega_T$, then $v\geq u^*$ in $\Omega_T$. Thus, $u^*$ must coincide with the unique solution to the obstacle problem given by Theorem~\ref{bddexistence}. Continuity follows from Lemma~\ref{continuity}. 
\end{proof}

We end the section by showing that whenever the solution to the obstacle problem lies strictly above the obstacle, it is in fact a weak solution. We shall prove this for the limit $u^*$ of Construction~\ref{construction}, which is a solution as seen above. First we need the following lemma. 

\begin{Lemma}\label{subsolution}
 The function $\varphi_k$ is a weak subsolution in the set $\{\varphi_k>\psi\}$ for every $k\in\N$.
\end{Lemma}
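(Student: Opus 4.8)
The plan is to prove this by induction on $k$, following the same inductive structure used throughout Construction~\ref{construction}. For $k=0$ there is nothing to prove since $\varphi_0 = \psi$, so the base case to address is $k=1$: here $\varphi_1 = \max\{\psi, v_0\}$, and on the open set $\{\varphi_1 > \psi\}$ we have $\varphi_1 = v_0$, which is a weak solution in $Q^0$ by Theorem~\ref{existenceofsolution}, hence a weak subsolution there; outside $Q^0$ we have $\varphi_1 = \psi = \varphi_0$, but on that part $\{\varphi_1 > \psi\}$ is empty, so the claim holds. The inductive step is the substantive part: assume $\varphi_k$ is a weak subsolution in $\{\varphi_k > \psi\}$ and show $\varphi_{k+1}$ is a weak subsolution in $\{\varphi_{k+1} > \psi\}$.

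For the inductive step I would decompose the open set $\{\varphi_{k+1} > \psi\}$ according to the relation with $Q^k$. On $Q^k$ we have $\varphi_{k+1} = \max\{\varphi_k, v_k\}$; since $v_k$ is a weak solution in $Q^k$ and $\varphi_k$ is, by the induction hypothesis, a weak subsolution on $\{\varphi_k > \psi\} \supset \{\varphi_{k+1} > \psi\} \cap Q^k$ (note $\varphi_{k+1} \geq \varphi_k \geq \psi$, so $\{\varphi_{k+1} > \psi\}$ need not be contained in $\{\varphi_k > \psi\}$ — this is exactly the subtlety), the maximum $\max\{\varphi_k, v_k\}$ is a weak subsolution there by Lemma~\ref{minsuper}, \emph{provided} both functions are subsolutions on the relevant set. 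The difficulty is that at points where $\varphi_{k+1} > \psi$ but $\varphi_k = \psi$, the function $\varphi_k$ is not known to be a subsolution. However, at such a point we must have $v_k > \psi = \varphi_k$, so in a neighborhood $\varphi_{k+1} = v_k$ (using continuity of $v_k, \varphi_k, \psi$), and $v_k$ is a genuine weak solution there; being a weak subsolution is a local property, so this handles that region. On the complementary region $\{\varphi_{k+1} > \psi\} \cap Q^k$ where $\varphi_k > \psi$, both $\varphi_k$ and $v_k$ are subsolutions, and Lemma~\ref{minsuper} applies directly to $\max\{\varphi_k, v_k\}$.

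Outside $Q^k$ we have $\varphi_{k+1} = \varphi_k$, and $\{\varphi_{k+1} > \psi\} \setminus \overline{Q^k} = \{\varphi_k > \psi\} \setminus \overline{Q^k}$, where $\varphi_k$ is a subsolution by hypothesis. To assemble these pieces into a single weak subsolution on all of $\{\varphi_{k+1} > \psi\}$ I would invoke the locality of the subsolution property together with the pasting Lemma~\ref{superglued} at the interface $\partial_p Q^k$: since $v_k = \varphi_k$ on $\partial_p Q^k$ and, by the comparison principle (Lemma~\ref{comparisonprinciple}), $v_k \leq \varphi_k$ fails in general — actually $v_k$ and $\varphi_k$ need only agree on the boundary — one checks the hypotheses of Lemma~\ref{superglued} with $v_1 = \varphi_k$, $v_2 = \max\{\varphi_k, v_k\}$, restricting attention to the open set $\{\varphi_{k+1}>\psi\}$. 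The main obstacle, as indicated, is the bookkeeping at points where the new constraint $\varphi_{k+1} > \psi$ is active but $\varphi_k$ coincides with $\psi$; the resolution is the continuity of all functions involved (Proposition~\ref{properties}(ii) and Theorem~\ref{existenceofsolution}), which forces $v_k > \psi$ locally there and lets us replace $\varphi_{k+1}$ by the honest weak solution $v_k$ in a neighborhood, after which locality of the subsolution property finishes the argument.
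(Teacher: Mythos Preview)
Your proposal is correct and follows essentially the same approach as the paper: induction on $k$, with the inductive step split into the region where $\varphi_k>\psi$ (handled via Lemma~\ref{minsuper} for $\max\{\varphi_k,v_k\}$ together with the pasting Lemma~\ref{superglued}) and the region where $\varphi_k=\psi$ but $\varphi_{k+1}>\psi$ (handled by the continuity argument forcing $\varphi_{k+1}=v_k$ locally). The paper organizes the case distinction pointwise---for each $z_0\in\{\varphi_{k+1}>\psi\}$ it produces a small cylinder in which $\varphi_{k+1}$ is a subsolution, then invokes locality---whereas you phrase it as a decomposition by region; the content is the same, though the paper's pointwise framing avoids the slight awkwardness you noted when trying to apply Lemma~\ref{superglued} ``globally'' on a set where $\varphi_k$ is not everywhere a subsolution.
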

\begin{proof}
If $\varphi_k=\psi$ for some $k$, the set $\{\varphi_k>\psi\}$ is empty and there is nothing to prove. Suppose thus $\varphi_k\neq\psi$ for every $k\in\N$. In the set $\{\varphi_1>\psi\}$ we must have $\varphi_1=v_0$ and hence the claim holds for $k=1$. Assume then that $\varphi_k$ is a weak subsolution in the set $\{\varphi_k>\psi\}$ for some $k\in\N$. First take a point $z_0=(x_0,t_0)\in\{\varphi_k>\psi\}$. The set 
$\{\varphi_k>\psi\}$ is open by the continuity of $\varphi_k$ and $\psi$ and thus, we find a cylinder 
$Q\Subset\{\varphi_k>\psi\}\subset\{\varphi_{k+1}>\psi\}$ such that $z_0\in Q$. Now, the function $\max\{\varphi_k,v_k\}$ is a weak 
subsolution in $Q\cap Q^k$ by Lemma~\ref{minsuper}, and thus, 
\[
 \varphi_{k+1}=\left\{ \begin{array}{ll}
\max\{\varphi_k,v_k\} & \textrm{in}\,\,Q\cap Q^k\\[2mm]
\varphi_k & \textrm{in}\,\,Q\setminus Q^k
\end{array}\right.
\]
is a weak subsolution in $Q$ by Lemma~\ref{superglued}. 

Let then $z_0\in\{\varphi_{k+1}>\psi\}\setminus\{\varphi_k>\psi\}$. Since we always have 
$\varphi_k\geq\psi$ and $z_0\notin\{\varphi_k>\psi\}$, we deduce that $\varphi_k(z_0)=\psi(z_0)$. Thus, $\varphi_{k+1}(z_0)=v_k(z_0)$ or otherwise 
$\varphi_{k+1}(z_0)>\psi(z_0)$ would fail to hold. Next, denote $w_k:=v_k-\varphi_k$ and choose a small enough $\varepsilon>0$ such that 
$w_k(z_0)>\varepsilon$. This can be done, since 
\[
 w_k(z_0)=v_k(z_0)-\varphi_k(z_0)=\varphi_{k+1}(z_0)-\psi(z_0)>0.
\]
By the continuity of $v_k$ and $\varphi_k$ there exists $r>0$ such that $|w_k(z)-w_k(z_0)|<\varepsilon$ for every 
$z\in Q_r:=K_r(x_0)\times(t_0-r,t_0+r)$. Since $\{\varphi_{k+1}>\psi\}$ is open, we may also assume that 
$Q_r\Subset\{\varphi_{k+1}>\psi\}$. Combining these facts yields $w_k(z)>w_k(z_0)-\varepsilon>0$ for every $z\in Q_r$, showing that $v_k>\varphi_k$ in $Q_r$. Therefore, $\varphi_{k+1}=v_k$ is a weak solution in $Q_r$. 

We have now found a neighborhood in which $\varphi_{k+1}$ is a weak subsolution for every point of the set $\{\varphi_{k+1}>\psi\}$. The 
induction argument finishes the proof. 
\end{proof}

\begin{Proposition}
 The limit $u^*$ of Construction~\ref{construction} is a weak solution in the set $\{u^*>\psi\}$.
\end{Proposition}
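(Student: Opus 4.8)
The plan is to deduce the claim from the fact, already established in Lemma~\ref{supersolution2}, that $u^*$ is a weak supersolution in $\Omega_T$: it then suffices to prove that $u^*$ is also a weak subsolution in the set $\{u^*>\psi\}$. Since $u^*$ and $\psi$ are continuous (Lemma~\ref{continuity} and the hypothesis on $\psi$), this set is open, and since being a weak sub- or supersolution is a local property, it is enough to show that every $z_0\in\{u^*>\psi\}$ has a cylindrical neighborhood $Q$ in which $u^*$ is a weak subsolution. Once $u^*$ is simultaneously a weak sub- and a weak supersolution in $Q$, it satisfies \eqref{weaksolution} there and hence is a weak solution in $Q$; as $z_0$ is arbitrary, it will be a weak solution in all of $\{u^*>\psi\}$.

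The decisive point is to upgrade the pointwise convergence $\varphi_k\to u^*$ to locally uniform convergence. The sequence $(\varphi_k)$ is nondecreasing and each $\varphi_k$ is continuous by Proposition~\ref{properties}, while the limit $u^*$ is continuous by Lemma~\ref{continuity}; hence Dini's theorem gives uniform convergence on every compact subset of $\Omega_T$. Fixing $z_0\in\{u^*>\psi\}$, I choose a cylinder $Q$ with $z_0\in Q$ and $\overline Q$ a compact subset of $\{u^*>\psi\}$; then $2\delta:=\min_{\overline Q}(u^*-\psi)>0$. By Dini's theorem there is $k_0\in\N$ with $\|u^*-\varphi_k\|_{L^\infty(\overline Q)}<\delta$ for all $k\ge k_0$, whence $\varphi_k>u^*-\delta\ge\psi+\delta>\psi$ on $\overline Q$, i.e.\ $Q\subset\{\varphi_k>\psi\}$. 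By Lemma~\ref{subsolution}, $\varphi_k$ is then a weak subsolution in $Q$ for every $k\ge k_0$.

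Finally I pass to the limit via Theorem~\ref{superlimit}. By Remark~\ref{minussuperissub} (using the normalization $\A(\xi)=-\A(-\xi)$) each $-\varphi_k$ with $k\ge k_0$ is a weak supersolution in $Q$; these functions are uniformly bounded by $\sup_{\Omega_T}|\psi|$ by Proposition~\ref{properties}, and $-\varphi_k\to -u^*$ pointwise in $Q$. Hence $-u^*$ is a weak supersolution in $Q$ by Theorem~\ref{superlimit}, that is, $u^*$ is a weak subsolution in $Q$. Combined with Lemma~\ref{supersolution2} this shows $u^*$ is a weak solution in $Q$, and therefore in $\{u^*>\psi\}$.

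I expect the only genuine obstacle to be the first reduction, namely ensuring that a fixed neighborhood of $z_0$ lies inside the moving sets $\{\varphi_k>\psi\}$ for all large $k$; this is exactly what the monotonicity of $(\varphi_k)$ together with Dini's theorem delivers, after which everything is a routine combination of Lemma~\ref{subsolution}, Remark~\ref{minussuperissub} and Theorem~\ref{superlimit}.
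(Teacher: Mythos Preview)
Your proposal is correct and follows essentially the same approach as the paper's proof: reduce to showing $u^*$ is a weak subsolution near each point of $\{u^*>\psi\}$, find $k_0$ so that a fixed cylinder $Q$ lies in $\{\varphi_k>\psi\}$ for all $k\ge k_0$, invoke Lemma~\ref{subsolution}, and pass to the limit with Theorem~\ref{superlimit} applied to $(-\varphi_k)$. The only cosmetic difference is that you obtain $k_0$ via Dini's theorem, whereas the paper writes $\{u^*>\psi\}=\bigcup_k\{\varphi_k>\psi\}$ and extracts $k_0$ from this increasing open cover of the compact $\overline Q^p$; these are the same compactness argument phrased two ways.
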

\begin{proof}
If $u^*=\psi$ there is nothing to prove. Thus, suppose $u^*\neq\psi$ and let $z_0=(x_0,t_0)\in\Omega_T$ be such that $u^*(z_0)>\psi(z_0)$. By the continuity of $u^*$ and $\psi$ the set $\{u^*>\psi\}$ is open, and thus, we find a cylinder $Q\Subset\{u^*>\psi\}$ such that $z_0\in Q$. Since the sequence $(\varphi_k)_{k=0}^{\infty}$ is nondecreasing, we clearly have for every $k\in\N_0$ that 
\[
 \{\varphi_k>\psi\}\subset\{\varphi_{k+1}>\psi\}\subset\{u^*>\psi\}.
\]
Moreover, if $z\notin\bigcup_{k=0}^\infty\{\varphi_k>\psi\}$, we must have $\varphi_k(z)=\psi(z)$ for every 
$k\in\N_0$, and thus $z\notin\{u^*>\psi\}$. Therefore, we see that 
\[
 \{u^*>\psi\} = \bigcup_{k=0}^\infty\{\varphi_k>\psi\}.
\]
In particular, the sets $\{\varphi_k>\psi\}$ form an open cover for $\overline Q^p$, and since $\overline Q^p$ is compact, there exists an integer $k_0\geq 1$ such that $Q\subset\{\varphi_{k_0}>\psi\}$. Now Lemma~\ref{subsolution} implies that $\varphi_k$ is a weak subsolution in $Q$ for every $k\geq k_0$, and since the sequence $(\varphi_k)_{k=0}^{\infty}$ is uniformly bounded, we obtain by applying 
Lemma~\ref{superlimit} to the sequence $(-\varphi_k)_{k=k_0}^{\infty}$ that $u^*$ is a weak subsolution in $Q$. Therefore, $u^*$ is a weak 
subsolution in $\{u^*>\psi\}$, and since $u^*$ is a weak supersolution in $\Omega_T$ by Lemma~\ref{supersolution2}, the result follows. 
\end{proof}

\vs

{\bf Acknowledgements}. This research has been supported by the Vilho, Yrj\"o and Kalle V\"ais\"al\"a Foundation.

\end{document}